\newcommand{\cA}{{\ensuremath{\mathcal A}} }
\newcommand{\cB}{{\ensuremath{\mathcal B}} }
\newcommand{\cF}{{\ensuremath{\mathcal F}} }
\newcommand{\cG}{{\ensuremath{\mathcal G}} }
\newcommand{\cJ}{{\ensuremath{\mathcal J}} }
\newcommand{\cM}{{\ensuremath{\mathcal M}} }
\newcommand{\cN}{{\ensuremath{\mathcal N}} }
\newcommand{\cP}{{\ensuremath{\mathcal P}} }
\newcommand{\cT}{{\ensuremath{\mathcal T}} }
\newcommand{\cU}{{\ensuremath{\mathcal U}} }
\newcommand{\cX}{{\ensuremath{\mathcal X}} }
\newcommand{\cY}{{\ensuremath{\mathcal Y}} }
\newcommand{\cZ}{{\ensuremath{\mathcal Z}} }
\newcommand{\bfa}{{\ensuremath{\mathbf a}} }
\newcommand{\bfb}{{\ensuremath{\mathbf b}} }
\newcommand{\bff}{{\ensuremath{\mathbf f}} }
\newcommand{\bfu}{{\ensuremath{\mathbf u}} }
\newcommand{\dd}{\ensuremath{\mathrm d}} 
\newcommand{\dB}{\ensuremath{\mathrm B}} 
\newcommand{\dC}{\ensuremath{\mathrm C}}
\newcommand{\dD}{\ensuremath{\mathrm D}} 
\newcommand{\dL}{\ensuremath{\mathrm L}}
\newcommand{\fru}{\ensuremath{\mathfrak u}}
\newcommand{\R}{\mathbb{R}}
\newcommand{\Z}{\mathbb{Z}}
\newcommand{\N}{\mathbb{N}}
\newcommand{\p}{\ensuremath{\mathrm P}}
\newcommand{\e}{\ensuremath{\mathrm E}}
\newcommand{\ind}{\mathbbm{1}}
\newcommand{\pre}{h^{-1}}
\DeclarePairedDelimiterX{\abs}[1]{\lvert}{\rvert}{#1}
\DeclarePairedDelimiterX{\norm}[1]{\lVert}{\rVert}{#1}
\DeclareMathOperator*{\inter}{\bigcap}       % \sum-like symbol for inter
\newcommand{\Sum}{\sum\limits}
\newcommand{\suptwo}[2]{\sup_{\substack{#1 \\ #2}}} % sup with 2 lines
\newcommand{\sumtwo}[2]{\sum_{\substack{#1 \\ #2}}} % sum with 2 lines
\newcommand{\intertwo}[2]{\inter_{\substack{#1 \\ #2}}} % inter with 2 lines
\DeclareMathOperator{\Int}{int}
\DeclareMathOperator{\dist}{dist}
\DeclareMathOperator{\cco}{\overline{co}}
\DeclareMathOperator{\projX}{proj_X}
\DeclareMathOperator{\projY}{proj_Y}
\newcommand{\ie}{i.\,e. }
\newcommand{\eg}{e.\,g. }
\renewcommand{\epsilon}{\varepsilon}
\let\temp\theta
\let\theta\vartheta
\let\vartheta\temp
\theoremstyle{plain}
\newtheorem{theorem}{Theorem}[section]
\newtheorem{lemma}[theorem]{Lemma}
\newtheorem{proposition}[theorem]{Proposition}
\newtheorem*{proposition*}{Proposition}
\theoremstyle{definition}
\newtheorem{definition}{Definition}[section]
\newtheorem{assumption}{Assumption}[section]
\theoremstyle{remark}
\newtheorem{rem}{Remark}[section]
\lbrace\begin{array}{@{}l@{}}}%
\numberwithin{equation}{section}
\title[Markov chain control with noise-free observation]
	{Optimal control of \mbox{continuous-time} Markov chains with \mbox{noise-free} observation}
\author{A. Calvia$^\star$}
\address{$^\star$University of Milano-Bicocca, Department of Mathematics and its Applications, via R.~Cozzi 55, 20125 Milano (Italy).}
\email{alessandro.calvia@unimib.it}
\thanks{This research was partially supported by GNAMPA-INdAM 2015 and 2016 projects \textit{Applicazioni innovative dei processi di punto marcato} and \textit{Problemi di controllo ottimo con osservazione parziale: applicazioni dei processi di punto marcato} and by MIUR-PRIN 2015 project \textit{Deterministic and stochastic evolution equations}.}
\date{}
\begin{document}
	
\begin{abstract}
	We consider an infinite horizon optimal control problem for a continuous-time Markov chain $X$ in a finite set $I$ with noise-free partial observation. The observation process is defined as $Y_t = h(X_t)$, $t \geq 0$, where $h$ is a given map defined on $I$. The observation is noise-free in the sense that the only source of randomness is the process $X$ itself. The aim is to minimize a discounted cost functional and study the associated value function $V$. After transforming the control problem with partial observation into one with complete observation (the separated problem) using filtering equations, we provide a link between the value function $v$ associated to the latter control problem and the original value function $V$. Then, we present two different characterizations of $v$ (and indirectly of $V$): on one hand as the unique fixed point of a suitably defined contraction mapping and on the other hand as the unique constrained viscosity solution (in the sense of Soner) of a HJB integro-differential equation. Under suitable assumptions, we finally prove the existence of an optimal control.
\end{abstract}

\maketitle
	
\noindent \textbf{Keywords:} partial observation control problem, continuous-time Markov chains, piecewise-deterministic Markov processes, Bellman equation, viscosity solutions.

\noindent \textbf{AMS 2010:} 93E20, 60J27, 60J25
\section{Introduction}
This paper is concerned with the infinite horizon optimal control of a continuous-time finite-state Markov chain with partial and \mbox{noise-free} observation. The analyzed model is described by a triple of continuous-time stochastic processes $(X,Y,\bfu)=(X_t, Y_t, u_t)_{t \geq 0}$. The process $X$, called \emph{unobserved process}, is the aforementioned Markov chain with values in a finite space $I$ and initial law $\mu$. The process $Y$, called \emph{observed process}, takes values in another finite space $O$. Finally, the process $\bfu$, called \emph{control process}, takes values in the set of Borel probability measures on a compact metric space $U$; we shall require that it belongs to the class $\cU_{ad}$ of predictable processes with respect to the filtration $(\cY_t)_{t \geq 0}$ generated by the process $Y$. This process represents the action of a \emph{relaxed control}, a choice motivated by technical reasons. However, we will be able to recover classical $U$-valued processes, \ie \emph{ordinary controls}, by standard approximation theorems and we will state the main results of this paper using this type of controls.

The purpose of our problem is to control the rate transition matrix (sometimes called \mbox{Q-matrix}) of the unobserved process $X$ via the control process $\bfu$, so that the functional
\begin{equation} \label{eq:introcontrolpb}
	J(\mu, \bfu) = \e_\mu^\bfu \int_0^{+\infty} \int_U e^{-\beta t} f(X_t, \fru) \, u_t(\dd \fru) \, \dd t,
\end{equation} 
is minimized. Here $f$ is a real-valued bounded function, called \emph{cost function}, $\beta$ is a positive \emph{discount factor} and the expectation is taken with respect to a specific probability measure $\p_\mu^\bfu$, depending on the initial law of the Markov chain $X$ and on the control $\bfu$. The infimum of the functional $J$ among all processes in the class $\cU_{ad}$ is the \emph{value function} $V(\mu)$. 

Classically, the solution of this kind of problem requires a two step procedure. First, one needs to write a \emph{filtering equation}, characterizing at each time $t \geq 0$ the conditional law of $X_t$ given $\cY_t$. For our model it can be written as a finite system of scalar equations, whose solution $\pi = (\pi_t)_{t \geq 0}$ coincides with the \emph{filtering process}, \ie it satisfies for each $i \in I$
\begin{equation} \label{eq:introfilteq}
\pi_t(i) = \p_\mu^\bfu(X_t = i \mid \cY_t), \quad \p_\mu^\bfu\text{-a.s.}\,.
\end{equation}
The second step consists in using the filtering process $\pi$ to transform the partial observation problem (\ref{eq:introcontrolpb}) into one with complete observation. This is done through standard computations involving the conditional laws studied in the first step. We associate to the latter problem, with state process coinciding with the filtering process $\pi$, another cost functional and a new cost function $v$ clearly related to the original value function $V$.

The core of this article will be devoted to characterizing $v$ (and indirectly the value function $V$) both as the unique fixed point of a contraction mapping and as a constrained viscosity solution of a \emph{Hamilton-Jacobi-Bellman} equation.
Finally, we will discuss the existence of an optimal ordinary control process $\bfu^\star$, \ie such that the minimum value of the functional $J$ is achieved.

Whilst the definition of the controlled Markov chain that we adopt below (formulated in a weak sense via a change of probability measure) is fairly standard, the specific feature of our model is the fact that the observation is \mbox{noise-free}. With this expression we mean that no external source of randomness is acting on the observed process, but its stochastic nature is due only to the unobserved process $X$.
In our model we suppose that $Y_t = h(X_t)$ for all $t \geq 0$ and for some function $h \colon I \to O$. The function $h$ generates a partition of the set $I$ by its level sets $\pre(a)$, $a \in O$. If at time $t$ one observes $Y_t = a$ for some $a \in O$, then $X_t$ takes some value in the set of states $\pre(a)$. We may say that level sets where $X_t$ lies are observed at any time $t \geq 0$. 

This situation has been studied also under different assumptions on the processes $X$ and $Y$; for instance in the context of an unobserved diffusion process and looking only at the filtering problem without control, research papers as \citep{bryson:linearfilt, crisan:nonlinfilt, runggaldier:filt, takeuchi:lsqestimation} partially deal with this feature and \citep{joannides:nonlinfilt} is devoted entirely to non-linear filtering with noise-free observation (therein called perfect observation). We also mention the chapter devoted to singular filtering in the book by Xiong \citep[Ch. 11]{xiong:stochfilt}. These models, that cannot be analyzed with well established results, have received a sporadic treatment in the literature, despite their potential and useful connection with applications, such as queuing systems (see \eg \citep{asmussen:applprob, bremaud:pp}) and inventory models (see \eg \citep{bensoussan:inventory}). More generally it is worth noticing the connection of our problem with the theory of the so called Hidden Markov Models (see \citep{elliott:hmm} for a comprehensive exposition on this subject). 

Two other works are closely related to the present one, \citep{confortola:filt} and \citep{winter:phdthesis}. In \citep{confortola:filt}, that shares the very same setting as the present one, filtering equations (\ref{eq:introfilteq}) are computed; the filtering process $\pi$ is characterized as a \emph{Piecewise Deterministic Process}, the important class of processes introduced by M.~H.~A.~Davis (see \citep{davis:markovmodels}), and its local characteristics are written down explicitly; however, in \citep{confortola:filt} the process $X$ is not controlled. We will use some results and notation from that paper. We also note an important difference between the approach to PDP optimal control problems presented in \citep{davis:markovmodels} and ours. In the book by Davis the class of control processes is represented by \emph{piecewise open-loop controls}, \ie processes depending only on the time elapsed since the last jump and the position at the last jump time of the PDP. In our problem, instead, we are forced to use a more general class of control policies depending on the past history of jump times and jump positions of the PDP. In fact, as we shall later see, we can find a correspondence between controls for the original problem with partial observation and policies for the reformulated PDP control problem only looking at this larger class. In this sense, an approach closer to ours can be traced in \citep{costadufour:PDPoptcontrol}. There the authors consider an optimal control problem for a PDP (with complete observation), where the control parameter acts only on the jump intensity and on the transition measure of the process but not on its deterministic flow.

A careful comparison with \citep{winter:phdthesis} is required. This PhD thesis analyzes a more general model than ours: alongside the processes $X$ and $Y$ with values in finite spaces, a further finite-state jump process appears, called \emph{environmental}, influencing both the unobserved and the observed processes. Our function $h$ is encoded in the specification of an \emph{information structure}, \ie a partition of the state space $I$. Although in some specific situations our problem can be described in the setting of \citep{winter:phdthesis}, there are some differences, both at level of definitions and of techniques adopted. In our paper, for instance, the initial state $X_0$ of the unobserved process is a random variable with law $\mu$, not just a pre-specified deterministic state; this is a common feature of Markov chains models but it induces some non-trivial complications as we shall see, in particular in connection with the value function $v$ of the reformulated completely observable control problem. We will perform a careful analysis of the connections between the original problem and the completely observable one and we will provide a detailed description of the structure of admissible controls in the reformulated problem: this is required to make the results in \citep{winter:phdthesis} fully rigorous. We will also see that we are able to prove that the value function $v$ is uniformly continuous and we will adopt the concept of constrained viscosity solution, introduced by H.~M.~Soner in \citep{soner:optcontrol1, soner:optcontrol2}, instead of using generalized gradient methods requiring locally Lipschitz continuity of $v$. It is our opinion that this different approach deserves a detailed exposition. It is worth mentioning that to obtain this result a better suited version of the Dynamic Programming Principle is proved. Moreover, a further result proved in this work is the characterization of the value function as the unique fixed point of a specific contraction mapping, under appropriate conditions. Finally, we will provide a direct proof of the existence of an optimal ordinary control, \ie an admissible ordinary control such that the infimum of the functional $J$ is achieved.

The paper is organized as follows: in Section \ref{sec:optcontrformulation} we introduce the notation used throughout the article and make precise the formulation of our problem. The properties of the filtering process are recalled in Section \ref{sec:filteringprocess} and using this process we transform the optimal control problem with partial observation into a complete observation one. In Section \ref{sec:pdpoptcontrol} we properly formulate the complete observation control problem and provide a link between its value function $v$ and the original value function $V$. The previously mentioned characterizations of $v$ are proved in Section \ref{sec:valuefunctioncharacterizations}.
Finally, in Section \ref{sec:existenceoptcontrol} we show the existence of an optimal control.

We mention that the results of this paper have been presented at the \textit{3rd Barcelona Summer School on Stochastic Analysis: a EMS Summer School}, held at the \textit{Centre de Recerca Matem\`atica, Universitat Aut\`onoma de Barcelona}, 27/06--01/07/2016 and at the \textit{International Workshop on BSDEs, SPDEs and their Applications} held at the \textit{University of Edimburgh}, 03-07/07/2017.

\section{The optimal control problem with partial observation} 
\subsection{Formulation} \label{sec:optcontrformulation}
Throughout the paper the set $\N$ denotes the set of natural integers $\N = \{1, 2, \dots \}$, whereas $\N_0 = \{0, 1, \dots \}$. We use also the symbols $\bar \N = \N \cup \{\infty\}$ and $\bar \N_0 = \N_0 \cup \{\infty\}$. As far as measurability is concerned, whenever we write the word \emph{measurable} it is understood that we mean \emph{Borel-measurable}. For a fixed metric space $E$, we denote by $\dB_b(E)$ (resp. $\dC_b(E)$) the set of real valued bounded measurable (resp. bounded continuous) functions on $E$ and by $\cP(E)$ the set of Borel probability measures on $E$.

The aim of our optimal control problem is to optimize the dynamics of a stochastic process $X = (X_t)_{t \geq 0}$, called the \emph{unobserved process} with values in a \emph{state space} $I$. The control is described by another stochastic process $\bfu = (u_t)_{t \geq 0}$, with values in the set of Borel probability measures $\cP(U)$ on a measurable space $(U, \cU)$, the \emph{space of control actions}. This process is chosen in a well defined class and is called \emph{control process}. At any time the chosen control action shall be based on the information provided by a further stochastic process $Y = (Y_t)_{t \geq 0}$, called the \emph{observed process} with values in an \emph{observation space} $O$.

Throughout this paper we will assume that $I$ and $O$ are finite sets and that $U$ is a compact metric space equipped with its Borel $\sigma$-algebra $\cU$. Therefore $\cP(U)$ is a compact metric space, too. We also fix a function $h \colon I \to O$ that, without loss of generality, may be assumed surjective. In general, this function can be constant, but we will exclude this trivial case in what follows.

The unobserved process will be a continuous time homogeneous Markov chain described by a \emph{controlled rate transition matrix} on $I$, sometimes called \mbox{Q-matrix} (see \eg \citep{norris:markovchains}). By this we mean that for each fixed $u \in U$ we have a real square matrix $\Lambda(u) = (\lambda_{ij}(u))_{i,j \in I}$ such that
\begin{enumerate}
	\item $\lambda_{ij}(u) \geq 0$ for all $i, j \in I$, $i \ne j$.
	\item $\sum_{j \in I} \lambda_{ij}(u) = 0$ for all $i \in I$.
\end{enumerate}
It is quite common to write for each $i \in I$
\begin{equation*}
	\lambda_i(u) \coloneqq -\lambda_{ii}(u) = \sumtwo{j \in I}{j \ne i} \lambda_{ij}(u).
\end{equation*}
On these matrix coefficients we introduce the following assumption
\begin{assumption} \label{assumption:lambda}
For each $i, j \in I$ the map $u \mapsto \lambda_{ij}(u)$ is continuous (hence bounded and uniformly continuous). In particular, we have that
\begin{equation*}
	\sup_{u \in U} \lambda_i(u) < +\infty.
\end{equation*}
\end{assumption}

We are going now to build the probability space on which the processes $X$, $Y$, $\bfu$ are defined. To this aim, we choose a canonical setting for the unobserved process $X$, which will be described as a \emph{marked point process}, or MPP for short (see \eg \citep{bremaud:pp}).
Let us define $\Omega$ as the set
\begin{equation*}
	\Omega = \{\omega = (i_0, t_1, i_1, t_2, i_2, \ldots) \colon i_0 \in I, i_n \in I, t_n \in (0, +\infty], t_n < +\infty \Rightarrow t_n < t_{n+1}, n \in \N\}.
\end{equation*}
For each $n \in \N$ we introduce the following random variables 
\begin{align*}
	T_n(\omega) &= t_n; & T_\infty(\omega) &= \lim_{n \to \infty} T_n(\omega); & \xi_0(\omega) &= i_0; & \xi_n(\omega) &= i_n
\end{align*}
and we define the random counting measure
\begin{equation*}
	n((0, t] \times \{i\}) = \sum_{n \in \N} \ind(\xi_n = i) \ind(T_n \leq t), \quad t \geq 0, \, i \in I,
\end{equation*}
with associated natural filtration $\cN_t = \sigma\bigl(n\bigl((0, t] \times \{i\}\bigr), \, 0 \leq s \leq t, \, i \in I\bigr)$.
Finally, let us specify the \mbox{$\sigma$-algebras}
\begin{align*}
	\cX_0 &= \sigma(\xi_0); & \cX_t &= \sigma(\cX_0 \cup \cN_t); & \cX &= \sigma\Bigl(\bigcup_{t \geq 0} \cX_t\Bigr).
\end{align*}

The unobserved process $X$ is defined as
\begin{equation*}
	X_t(\omega) =
	\begin{cases}
		\xi_0(\omega),	& t \in \bigl[0, T_1(\omega)\bigr) \\
		\xi_n(\omega),	& t \in \bigl[T_n(\omega), T_{n+1}(\omega)\bigr), \, n \in \N, \, T_n(\omega) < +\infty \\
		i_\infty,		& t \in \bigl[T_\infty(\omega), +\infty), \, T_\infty(\omega) < +\infty
	\end{cases}
\end{equation*}
where $i_\infty \in I$ is an arbitrary state, that is irrelevant to specify.
Next, we define the observed process $Y$ and its natural filtration $(\cY_t)_{t \geq 0}$ as
\begin{align*}
	Y_t(\omega) &= h(X_t(\omega)), \, t \geq 0, \, \omega \in \Omega; & \cY_t &= \sigma\bigl(Y_s,\, 0 \leq s \leq t\bigr), \, t \geq 0.
\end{align*} 
It is clear that we can equivalently describe this process via a MPP $(\eta_n, \tau_n)_{n \in \N}$ with initial condition $\eta_0 = h(\xi_0) = Y_0$.
Accordingly, the $\sigma$-algebras of the natural filtration of $Y$ are the smallest $\sigma$-algebras generated by the union of $\sigma(\eta_0)$ and the $\sigma$-algebras of the natural filtration of the MPP $(\eta_n, \tau_n)_{n \in \N}$.

As said at the beginning of this section, we need to consider control processes $\bfu$ that are based on the information brought by the observed process $Y$.
More precisely, we will choose controls in the class of \emph{admissible controls}, defined as the set
\begin{equation}\label{eq:admissiblecontrols}
	\cU_{ad} = \Bigl\{\bfu \colon \Omega \times [0,+\infty) \to \cP(U), \, (\cY_t)_{t \geq 0}\text{~--~predictable}\Bigr\}, 
	\quad \cY_t = \sigma\bigl(Y_s,\, 0 \leq s \leq t\bigr).
\end{equation}

\begin{rem}\label{rem:relaxedcontrols}
It must be pointed out that considering $\cP(U)$--valued processes (the so called \emph{relaxed controls}), instead of ordinary $U$--valued processes, has considerable technical benefits that will be fully clear in Section \ref{sec:pdpoptcontrol}. At this stage such a choice has almost no impact on the problem itself (except for a slightly more complicated notation), being both $U$ and $\cP(U)$ compact metric spaces. It is important to notice that any ordinary control is included in this formulation by considering its corresponding process in $\cU_{ad}$ whose value at each time $t \geq 0$ and $\omega \in \Omega$ is given by a probability measure concentrated at a single point in $U$. Ordinary controls are far easier to understand and implement and we will later mention some technical results enabling us to use such controls to prove the main results of this paper. As a final note on this subject, we recall that the existence of an ordinary optimal control will be proved in Section \ref{sec:existenceoptcontrol}.
\end{rem}

Thanks to the peculiar structure of the natural filtration of $Y$ we have a precise characterization of the class $\cU_{ad}$ (see \eg \citep[Lemma 3.3]{jacod:mpp} or \citep[Appendix A2, Theorem T34]{bremaud:pp}).
A control process $\bfu \in \cU_{ad}$ is completely determined by a sequence of \mbox{Borel-measurable} functions $(u_n)_{n \in \bar \N_0}$, with $u_n \colon [0, +\infty) \times O \times \bigl((0, +\infty] \times O \bigr)^n \to \cP(U)$ for each $n \in \bar \N_0$ and we can write
\begin{multline}\label{eq:controlsrepresentation}
u_t(\omega) = u_0(t, Y_0(\omega)) \ind(0 \leq t \leq \tau_1(\omega)) +\\
\sum_{n = 1}^\infty u_n(t, Y_0(\omega), \tau_1(\omega), Y_1(\omega), \dots, \tau_n(\omega), Y_n(\omega)) \ind(\tau_n(\omega) < t \leq \tau_{n+1}(\omega)) + \\
u_\infty(t, Y_0(\omega), \tau_1(\omega), Y_1(\omega), \dots) \ind(t > \tau_\infty(\omega)),
\end{multline}
where $\tau_\infty(\omega) = \lim_{n \to \infty} \tau_n(\omega)$.
This kind of decomposition of a control process $\bfu \in \cU_{ad}$ will be of fundamental importance throughout the paper and we will frequently switch between the notation $(u_t)_{t \geq 0}$ and $(u_n)_{n \in \bar \N_0}$.

The dynamics of the unobserved process will be specified by the initial distribution $\mu$, a probability measure on $I$, and by the following random measure depending on $u$
\begin{equation}\label{eq:predproj}
	\nu^\bfu(\omega;\, \dd t \times \{i\}) = 
	\begin{dcases}
		\ind\bigl(t < T_{\infty}(\omega)\bigr) \int_U \lambda_{X_{t-}(\omega) i}(\fru) \, u_t(\omega \, ; \dd \fru) \, \dd t, &\text{if } i \ne X_{t-}\\
	0, &\text{if } i = X_{t-}
	\end{dcases}
	, \quad \omega \in \Omega, \, \bfu \in \cU_{ad}.
\end{equation}
For sake of simplicity, we will drop $\omega$ in what follows. 

Now set $\p_0$ as the probability measure on $\bigl(\Omega, \cX_0\bigr)$ such that $X_0 = \xi_0$ has law $\mu$.
It is easy to see that the previously described setting is equivalent to that provided in hypothesis (A.2) in \citep{jacod:mpp}.
In fact, one can show that the random measure $\nu^\bfu$ is $(\cX_t)_{t \geq 0}$--predictable and satisfies condition 4 in \citep{jacod:mpp}, \ie
\begin{enumerate}
	\item $\nu^\bfu\bigl(\{t\} \times I\bigr) \leq 1$,
	\item $\nu^\bfu\bigl([T_\infty, +\infty) \times I\bigr) = 0$.
\end{enumerate}
Therefore, by \citep[Th. 3.6]{jacod:mpp}, there exists a unique probability measure $\p_\mu^\bfu$ on $\bigl(\Omega, \cX\bigr)$,
such that $\p_\mu^\bfu \rvert_{\cX_0} = \p_0$ and $\nu^\bfu$ is the $\bigl(\p_\mu^\bfu, \cX_t\bigr)$--predictable projection of $n$.
Once specified the control $\bfu \in \cU_{ad}$ and consequently the probability measure $\p_\mu^\bfu$,
it follows from Assumption \ref{assumption:lambda} and by standard arguments that the point process $n$ is $\p_\mu^\bfu$-a.s. \mbox{non-explosive}, \ie that $T_\infty = +\infty$, \mbox{$\p_\mu^\bfu$-a.s.}.
For this reason we will drop the term $\ind(t < T_\infty)$ appearing in (\ref{eq:predproj}) and, since also $\tau_\infty = +\infty$ \mbox{$\p_\mu^\bfu$-a.s.}, we will avoid specifying the function $u_\infty$ in (\ref{eq:controlsrepresentation}).  

To conclude the previous construction, for a fixed probability measure $\mu$ on $I$ and $\bfu \in \cU_{ad}$ define
\begin{itemize}
	\item $\cX^{\mu, \bfu}$ the \mbox{$\p_\mu^\bfu$-completion} of $\cX$
	($\p_\mu^\bfu$ is extended to $\cX^{\mu, \bfu}$ in the natural way).
	\item $\cZ^{\mu, \bfu}$ the family of elements of $\cX^{\mu, \bfu}$
	with zero $\p_\mu^\bfu$~--~probability.
	\item $\cY_t^{\mu, \bfu} = \sigma(\cY_t, \cZ^{\mu, \bfu})$, for $t \geq 0$.
\end{itemize}
$(\cY_t^{\mu, \bfu})_{t \geq 0}$ is called the \emph{natural completed filtration} of $Y$.

As is common in optimal control problems, control actions are based on some performance criterion.
In our setting we seek to minimize, for all possible choices of the initial distribution $\mu$ of the process $X$, the following \emph{cost functional}
\begin{equation} \label{eq:costfunctional}
	J(\mu, \bfu) = \e_\mu^{\bfu} \biggl[ \int_0^\infty e^{-\beta t} \int_U f(X_t, \fru) \, u_t(\dd \fru) \, \dd t \biggr]
\end{equation}
where $f$ is called \emph{cost function} and $\beta > 0$ is a fixed constant called \emph{discount factor}. In other words, we want to characterize the \emph{value function}
\begin{equation} \label{eq:valuefunction}
	V(\mu) = \inf_{\bfu \in \cU_{ad}} J(\mu, \bfu).
\end{equation}

The following assumption on the cost function $f$ will be in force throughout the paper and ensures that the functional $J$ is well defined (and also bounded).
\begin{assumption} \label{assumption:costfunction}
	The function $f \colon I \times U \to \R$ is continuous. Since $U$ is compact and $I$ finite, $f$ is uniformly continuous and it holds that
	\begin{equation}\label{eq:fbounded}
		\sup_{(i,u) \in I \times U} \abs{f(i,u)} \leq C_f,
	\end{equation}
	for some constant $C_f > 0$.
\end{assumption}
Since $I$ is a finite set, we will denote by $\bff(u)$ the column vector whose components are the values $f(i,u)$ as $i$ varies in the set $I$, for each fixed $u \in U$.

\subsection{The filtering process and the complete observation control problem} \label{sec:filteringprocess}
In this Subsection we use the notation introduced in \citep{confortola:filt} and recall without proof some of the results. In fact, we present generalizations of those contained in the referenced work to the case where the unobserved process is controlled. The proofs are similar and will be omitted.

We can transform the problem formulated above into a complete observation problem by means of another stochastic
process, called the \emph{filtering process} (see \eg \citep{bremaud:pp}), defined for all $i \in I$ as
\begin{equation*}
	\p_\mu^\bfu(X_t = i \mid \cY_t^{\mu, \bfu}), \quad t \geq 0.
\end{equation*}
The filtering process $\pi$ takes values on the set of probability measures on $I$ which can be naturally identified with the canonical simplex on $\R^{\abs{I}}$, \ie
\begin{equation*}
	\Delta = \{\rho \in \R^{\abs{I}} \colon \rho(i) \geq 0, \, \forall i = 1, \dots, \, \abs{I}, \sum_{i = 1}^{\abs{I}} \rho(i) = 1\}.
\end{equation*}
However, in our framework the actual values of $\pi$ lie in the so called \emph{effective simplex} $\Delta_e$.
It is defined as $\Delta_e = \cup_{a \in O} \Delta_a$, where for each $a \in O$ $\Delta_a$ indicates the set of probability measures supported on $\pre{(a)}$.
The effective simplex is a proper subset of $\Delta$ unless the function $h$ is constant. It is obviously compact.
It is worth noticing that the filtering process is a $(\cY_t^{\mu, \bfu})_{t \geq 0}$--adapted process and since $(\cY_t^{\mu, \bfu})_{t \geq 0}$ is right continuous
we can choose a $(\cY_t^{\mu, \bfu})_{t \geq 0}$--progressive version.
We will assume this whenever needed.

In what follows probability measures (or, more generally, finite measures) on $I$ will be identified with row vectors on $\R^{\abs{I}}$.

Let us now define on the effective simplex the vector field 
$F \colon \Delta_e \times U \to \Delta_e$ as
\begin{equation} \label{eq:controlledvectorfield}
	F_j(\nu, u) =
	\begin{cases}
		[\nu \Lambda(u)]_j - (\nu \Lambda(u) \ind_{\pre{(a)}}) \nu_j, 	& j \in \pre{(a)}\\
		0,																& \text{otherwise}
	\end{cases}
	, \quad \nu \in \Delta_a, a \in O.
\end{equation}
Subscript $j$ denotes the \mbox{$j$-th} component of a vector and $\ind_{\pre{(a)}}$ is the column vector
\begin{equation*}
	[\ind_{\pre{(a)}}]_i =
	\begin{cases}
		1, 	& i \in \pre{(a)}\\
		0,	& \text{otherwise}
	\end{cases}
	. 
\end{equation*}
It is clear that the map $u \mapsto F(\nu, u)$ is measurable for all $\nu \in \Delta_e$. Moreover, Assumption \ref{assumption:lambda} implies that $F$ is Lipschitz continuous in $\nu$ uniformly in $u$, \ie there exists a constant $L_F > 0$ such that
\begin{equation}\label{eq:FLipschitz}
	\sup_{u \in U} |F(\nu, u) - F(\rho, u)| \leq L_F |\nu-\rho|, \quad \text{for all } \nu, \rho \in \Delta_a,\, a \in O.
\end{equation}
Therefore, a generalization of \citep[Proposition 2.1]{confortola:filt} provides us with the following result.
\begin{proposition}\label{prop:PDPflow}
	For every $a \in O$, $\rho \in \Delta_a$ and all measurable functions
	$m \colon [0, +\infty) \to \cP(U)$, the differential equation
	\begin{equation} \label{eq:relaxedODE}
	\begin{dcases}
	\frac{\dd}{\dd t} z(t) = \int_U F(z(t), u) \, m(t \, ;\dd u), \quad t \geq 0 \\
	z(0) = \rho
	\end{dcases}
	\end{equation}
	has a unique global solution $y \colon [0, +\infty) \to \R^{\abs{I}}$.
	Moreover $y(t) \in \Delta_a$ for all $t \geq 0$.
\end{proposition}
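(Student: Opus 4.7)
The plan is to treat the right-hand side $G(t, z) := \int_U F(z, u) \, m(t; \dd u)$ as a Carathéodory vector field and follow the blueprint of \citep[Proposition 2.1]{confortola:filt}; the only novelty is that the driving term is averaged over a measurable family of relaxed controls rather than given by a single control value.

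First I would establish local existence and uniqueness. Thanks to Assumption \ref{assumption:lambda}, the integrand $u \mapsto F(z, u)$ is continuous for each fixed $z$, so $G$ is well defined; measurability of $t \mapsto m(t; \cdot)$ together with Fubini gives measurability of $t \mapsto G(t, z)$; and the uniform Lipschitz estimate \eqref{eq:FLipschitz} passes through the integral to yield $|G(t, z_1) - G(t, z_2)| \leq L_F |z_1 - z_2|$ for all $t$. After extending $F$ by the same polynomial formula \eqref{eq:controlledvectorfield} to a bounded neighborhood of $\Delta_a$ inside the affine subspace $\{\nu \in \R^{\abs{I}} : \nu_j = 0 \text{ for } j \notin \pre{(a)}\}$, and truncating outside this neighborhood to obtain a globally Lipschitz and bounded vector field, classical Carathéodory theory (Picard iteration) gives a unique absolutely continuous local solution, which extends to $[0, +\infty)$ once the a priori bound $y(t) \in \Delta_a$ is proved.

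Next I would establish invariance of $\Delta_a$ by verifying three properties at the level of $F$ itself. \emph{Support:} for $j \notin \pre{(a)}$ the component $F_j$ is identically zero by \eqref{eq:controlledvectorfield}, hence $y_j(t) = \rho_j = 0$ for all $t$. \emph{Mass preservation:} summing \eqref{eq:controlledvectorfield} over $j \in \pre{(a)}$ yields
\[
\sum_{j \in \pre{(a)}} F_j(\nu, u) = \nu \Lambda(u) \ind_{\pre{(a)}} \Bigl(1 - \sum_{j \in \pre{(a)}} \nu_j\Bigr),
\]
which vanishes when the total mass equals $1$; integrating in $u$ against $m(t; \dd u)$ gives $\tfrac{\dd}{\dd t}\sum_j y_j(t) \equiv 0$, so $\sum_j y_j(t) = \sum_j \rho_j = 1$. \emph{Non-negativity:} at a point $\nu$ with $\nu_k \geq 0$ for all $k \in \pre{(a)}$ and $\nu_j = 0$ for some such $j$, formula \eqref{eq:controlledvectorfield} simplifies to $F_j(\nu, u) = [\nu \Lambda(u)]_j = \sum_{k \ne j} \nu_k \lambda_{kj}(u) \geq 0$, using the Q-matrix property $\lambda_{kj}(u) \geq 0$ for $k \ne j$. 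Thus the vector field is inward-pointing on the relative boundary of $\Delta_a$.

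The main technical subtlety lies in converting the inward-pointing condition into an actual invariance statement, since the classical Nagumo-type theorems assume a continuous vector field whereas $G(t, \cdot)$ is only measurable in $t$. I would handle this in one of two ways: by a direct Gronwall argument applied to $\phi(t) := \sum_j \max(-y_j(t), 0)$, whose upper Dini derivative is controlled by the Lipschitz bound on $G$ and cannot be strictly negative on the set $\{y_j = 0\}$ because of the computation above; or by approximating $m(t; \cdot)$ by piecewise-constant-in-$t$ relaxed controls, applying the standard invariance principle to each autonomous piece, and passing to the limit via continuous dependence on the input (itself a Gronwall consequence of \eqref{eq:FLipschitz}). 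Either route yields $y(t) \in \Delta_a$ for all $t$, which, combined with Step 1, furnishes the claimed unique global solution.
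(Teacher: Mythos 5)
Your proposal is correct. Note that the paper itself gives no proof of this proposition: it is stated as a generalization of \citep[Proposition 2.1]{confortola:filt} with the proof explicitly omitted, so there is no argument in the text to compare against line by line. Your reconstruction is the natural one: Carath\'eodory existence and uniqueness from the Lipschitz bound \eqref{eq:FLipschitz} (after extending the polynomial formula \eqref{eq:controlledvectorfield} off $\Delta_a$), followed by invariance of $\Delta_a$ via the three observations (vanishing components off $\pre{(a)}$, conservation of total mass, inward pointing on the faces $\{\nu_j = 0\}$), and you correctly identify the genuine subtlety, namely that Nagumo-type invariance theorems do not apply directly to a field that is merely measurable in $t$; both of your workarounds (the Gronwall estimate on $\sum_j \max(-y_j,0)$, or approximation by piecewise constant relaxed controls) are sound. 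One presentational point: as written, the mass-preservation step is circular (you use $\sum_j y_j = 1$ to conclude its derivative vanishes, and vice versa); the fix is the same Gronwall device you already use for non-negativity, i.e.\ $s(t) = 1 - \sum_{j} y_j(t)$ solves the linear equation $s'(t) = -c(t)\,s(t)$ with $c$ bounded measurable and $s(0)=0$, hence $s \equiv 0$.
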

We will write $\phi_{a, \rho}^m(t)$ instead of $y(t)$ to stress the dependence of the solution to (\ref{eq:relaxedODE}) on $\rho$ and on the measurable function $m$. To ease up the notation a little, we also define $\phi_\rho^m(t) = \phi_{a, \rho}^m(t)$ if $\rho \in \Delta_a$ to denote the global flow associated to the vector field $F$.

As we will see shortly, the filtering process $\pi$ solves a SDE, the \emph{filtering equation}. This SDE can be written pathwise as a system of ODEs; in fact, it is likely, and can be proved, that between two jump times of the process $Y$ the trajectories of $\pi$ have to evolve in a deterministic fashion (in particular, according to the vector field $F$ as we will show), since no new information is brought until a new jump of $Y$ occurs. The stochastic behaviour of $\pi$ resides in the fact that it jumps whenever $Y$ does, \ie the jump times of $\pi$ are the collection $(\tau_n)_{n \in \N}$. The \mbox{post-jump} locations of $\pi$ and its initial value, are determined by the functions $H_a \colon \R^{\abs{I}} \to \R^{\abs{I}}$, mapping row vectors into row vectors and defined for each $a \in O$ as 
\begin{equation} \label{eq:operatorH}
	H_a[\mu](i) =
	\begin{cases}
		0,																		&\text{if } i \notin \pre{(a)},\\
		\frac{\mu(i)}{\mu \ind_{\pre{(a)}}},	&\text{if } i \in \pre{(a)},\quad \mu \ind_{\pre{(a)}} \ne 0,\\
		\nu_a, 																&\text{if } \mu \ind_{\pre{(a)}} = 0, 
	\end{cases}
\end{equation}
where $\nu_a$ is an arbitrary probability measure supported on $\pre{(a)}$ whose exact values are irrelevant.
Whenever the process $Y$ jumps, say to $a \in O$, the process $\pi$ will jump to a specific state prescribed by the function $H_a$. This state belongs to the subset $\Delta_a$ of the effective simplex, necessarily different from the subset of $\Delta_e$ to which the \mbox{pre-jump} state belonged.

We are now ready to state the first important result of this subsection, which extends to the case of controlled process $X$ the corresponding results in \citep{confortola:filt}.
\begin{theorem}[Filtering equation]
	For all $\omega \in \Omega$ define $\tau_0(\omega) \equiv 0$ and for fixed $\bfu \in \cU_{ad}$ the stochastic process
	$\pi^{\mu,\bfu} = (\pi^{\mu,\bfu}_t)_{t \geq 0}$ as the unique solution 
	of the following system of ODEs
	\begin{equation} \label{eq:controlledfilteringprocess}
		\begin{dcases}
			\frac{\dd}{\dd t} \pi^{\mu,\bfu}_t(\omega) = \int_U F(\pi^{\mu,\bfu}_t(\omega), \fru) \, u_t(\omega \, ; \dd \fru), \quad t \in [\tau_n(\omega), \tau_{n+1}(\omega)), \, n \in \N_0 \\
			\pi^{\mu,\bfu}_0(\omega) = H_{Y_0(\omega)}[\mu],\\
			\pi^{\mu,\bfu}_{\tau_n}(\omega) = H_{Y_{\tau_n(\omega)}}\biggl[\pi_{\tau_n^-(\omega)}^{\mu,\bfu}(\omega) \int_U \Lambda(\fru) \, u_{\tau_n^-}(\omega \, ; \dd \fru)\biggr], \, n \in \N.
		\end{dcases}
	\end{equation}
	where $F$ is the vector field defined in (\ref{eq:controlledvectorfield}).
	
	Then, $\pi^{\mu,\bfu}$ is $(\cY_t)_{t \geq 0}$~-~adapted and is a modification of the filtering process,
	\ie 
	\begin{equation*}
		\pi_t^{\mu,\bfu}(i) = \p_\mu^\bfu(X_t = i \mid \cY_t^{\mu, \bfu}), \quad \p_\mu^\bfu\text{--a.s.}, \, t \geq 0, \, i \in I.
	\end{equation*}
\end{theorem}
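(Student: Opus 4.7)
The plan is to mirror the proof of the analogous uncontrolled result in \citep{confortola:filt}, showing first that (\ref{eq:controlledfilteringprocess}) admits a well-defined $(\cY_t)_{t \geq 0}$-adapted solution, and then verifying by induction on the observation jumps that this solution coincides $\p_\mu^\bfu$-a.s.\ with the conditional law of $X_t$ given $\cY_t^{\mu,\bfu}$.

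For well-posedness, note that between consecutive jumps $\tau_n$ and $\tau_{n+1}$ the observed value is frozen at some $a \in O$, so the restriction of $\pi^{\mu,\bfu}$ to $[\tau_n, \tau_{n+1})$ must belong to $\Delta_a$. Hence the first equation in (\ref{eq:controlledfilteringprocess}) falls into the scope of Proposition \ref{prop:PDPflow} with measurable datum $m(s) = u_{\tau_n + s}$, yielding a unique global solution on this interval. At each jump time $\tau_{n+1}$ the value is updated explicitly by the operator $H_{Y_{\tau_{n+1}}}$. Iterating and using the $\p_\mu^\bfu$-a.s.\ non-explosivity of $T_\infty$ produces a pathwise unique process on $[0, +\infty)$. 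Its adaptedness to $(\cY_t)_{t \geq 0}$ is a direct consequence of the representation (\ref{eq:controlsrepresentation}) of admissible controls together with the fact that the random times $\tau_n$ and post-jump values $Y_{\tau_n}$ are $(\cY_t)_{t \geq 0}$-measurable on $\{\tau_n \leq t\}$.

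To identify this process with the filtering process I would argue by induction on $n$. The base case reduces to $\p_\mu^\bfu(X_0 = i \mid Y_0)$, which by Bayes' rule equals $H_{Y_0}[\mu](i)$. For the inductive step, fix a test function $g \in \dB_b(I)$, and use the compensator $\nu^\bfu$ of the counting measure $n$ to write the Doob--Meyer decomposition
\begin{equation*}
g(X_t) = g(X_{\tau_n}) + M^g_t + \int_{\tau_n}^t \sum_{j \in I} (g(j) - g(X_{s-})) \int_U \lambda_{X_{s-} j}(\fru) \, u_s(\dd \fru)\, \dd s, \qquad \tau_n \leq t < \tau_{n+1},
\end{equation*}
where $M^g$ is a $(\cX_t^{\mu,\bfu})_{t \geq 0}$-martingale. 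Conditioning on $\cY_t^{\mu,\bfu}$ and exploiting the no-jump information $\{\tau_{n+1} > t\}$, which forces $X_s \in \pre(Y_{\tau_n})$ throughout $[\tau_n, t]$, one derives that the candidate $\pi^{\mu,\bfu}_t$ satisfies the stated ODE on this interval. At the subsequent jump time, Bayes' rule applied to the jump intensity of $Y$, which is obtained by summing the $\nu^\bfu$-rates over $\pre(a)$ for $a \ne Y_{\tau_n}$, yields the update formula involving $H_{Y_{\tau_{n+1}}}\bigl[\pi^{\mu,\bfu}_{\tau_{n+1}^-} \int_U \Lambda(\fru) \, u_{\tau_{n+1}^-}(\dd \fru)\bigr]$.

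The main technical obstacle lies in the between-jump step, since the conditioning $\sigma$-algebra $\cY_t^{\mu,\bfu}$ includes the survival event $\{\tau_{n+1} > t\}$, whose conditional likelihood is non-constant in $t$. The standard way around this is via the exponential formula for the conditional survival function of $\tau_{n+1}$ together with a differentiation-under-the-integral argument; it is precisely the corresponding likelihood correction that produces the nonlinear term $-(\nu \Lambda(u) \ind_{\pre(a)}) \nu_j$ in the vector field (\ref{eq:controlledvectorfield}). Once this identification is in place, the remaining computations run exactly as in the uncontrolled case of \citep{confortola:filt}, with every constant rate $\lambda_{ij}$ replaced by its control-averaged counterpart $\int_U \lambda_{ij}(\fru) \, u_s(\dd \fru)$.
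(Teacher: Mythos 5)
Your proposal follows exactly the route the paper intends: the paper explicitly omits the proof of this theorem, stating that it is a generalization of the corresponding results in \citep{confortola:filt} to the controlled case and that ``the proofs are similar,'' and your sketch --- well-posedness via Proposition \ref{prop:PDPflow}, adaptedness from the representation (\ref{eq:controlsrepresentation}), and identification by induction on jumps using the compensator $\nu^\bfu$, Bayes' rule at jump times, and the survival-likelihood correction that generates the nonlinear term in $F$ --- is precisely that generalization with $\lambda_{ij}$ replaced by its control-averaged counterpart. No discrepancy with the paper's (omitted) argument.
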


\begin{rem}
	Thanks to the structure of admissible controls shown in (\ref{eq:controlsrepresentation}) we can write (\ref{eq:controlledfilteringprocess}) as
	\begin{equation*} \label{eq:controlledfilteringprocess2}
		\begin{dcases}
		\frac{\dd}{\dd t} \pi^{\mu,\bfu}_t = \int_U F(\pi^{\mu,\bfu}_t, \fru) \, u_n(t,Y_0, \dots, \tau_n, Y_{\tau_n} \, ; \dd \fru), \quad t \in [\tau_n, \tau_{n+1}), \, n \in \N_0 \\
		\pi^{\mu,\bfu}_0 = H_{Y_0}[\mu],\\
		\pi^{\mu,\bfu}_{\tau_n} = H_{Y_{\tau_n}}\biggl[\pi_{\tau_n^-}^{\mu,\bfu} \int_U \Lambda(\fru) \, u_{n-1}(\tau_n^-,Y_0, \dots, \tau_{n-1}, Y_{\tau_{n-1}} \, ; \dd \fru)\biggr], \, n \in \N.
		\end{dcases}
	\end{equation*}
\end{rem}

From the discussion preceding this theorem, it is obvious (at least heuristically) that the process $\pi$ is a \emph{Piecewise Deterministic Process} (PDP). To characterize a stochastic process as such we need to define a triple $(F, r, R)$, given by the controlled vector field $F$ defined in (\ref{eq:controlledvectorfield}), a controlled jump rate function $r \colon \Delta_e \times U \to [0, +\infty)$ and a controlled stochastic kernel $R$, \ie a probability transition kernel from $(\Delta_e \times U, \cB(\Delta_e) \otimes \cU)$ to $(\Delta_e, \cB(\Delta_e))$.
We define $r$ and $R$ as
\begin{equation} \label{eq:PDPcharacteristictriple}
\begin{split}
	r(\rho, u) &= - \rho \Lambda(u) \ind_{\pre{(a)}}, \quad \rho \in \Delta_a \\
	R(\rho, u; D) &= \sum_{b \in O} \ind_D\bigl(H_b[\rho \Lambda(u)]\bigr) \, q(\rho, u, b), \quad \rho \in \Delta_a \\
	q(\rho, u, b) &=
		\begin{dcases}
		\frac{\rho \Lambda(u) \ind_{\pre(b)}}{- \rho \Lambda(u) \ind_{\pre{(a)}}}
		\ind_{b \ne a}, &\text{if } \rho \Lambda(u) \ind_{\pre{(a)}} \ne 0\\
		q_a(b), &\text{if } \rho \Lambda(u) \ind_{\pre{(a)}} = 0
		\end{dcases}, \quad \rho \in \Delta_a
\end{split}
\end{equation}
where for each $a \in O$ we denote by $q_a = (q_a(b))_{b \in O}$ a probability measure supported on $O \setminus \{a\}$ whose exact values are irrelevant.
It is important to notice that under Assumption \ref{assumption:lambda} $r$ is Lipschitz continuous uniformly in $u$, \ie
\begin{equation} \label{eq:rLipschitz}
\sup_{u \in U} |r(\rho, u) - r(\theta, u)| \leq L_r |\rho - \theta|, \quad \text{for all } \rho, \theta \in \Delta_a,\, a \in O,
\end{equation}
with Lipschitz constant given by $L_r = \sum_{i \in I} \sup_{u \in U} \lambda_i(u)$. We also have that for some $C_r > 0$
\begin{equation} \label{eq:rbounded}
	\sup_{(\rho, u) \in \Delta_e \times U} |r(\rho, u)| \leq C_r.
\end{equation}

\begin{theorem} \label{th:filteringprocessPDP}
	For every $\nu \in \Delta_e$ and all $\bfu \in \cU_{ad}$ 
	the filtering process $\pi^{\nu,\bfu} = (\pi^{\nu,\bfu}_t)_{t \geq 0}$
	defined on the probability space $(\Omega, \cX, \p_\nu^\bfu)$ and taking values in $\Delta_e$
	is a controlled \emph{Piecewise Deterministic Process} with respect to the triple $(F, r, R)$ defined in (\ref{eq:PDPcharacteristictriple}) and with starting point $\nu$.
	
	More specifically, we have that $\p_\nu^\bfu\text{--a.s.}$
	\begin{equation}\label{eq:filterflow}
		\pi_t^{\nu, \bfu} = \phi_{\pi^{\nu, \bfu}_{\tau_n}}^{u_n}(t-\tau_n), \quad t \in [\tau_n, \tau_{n+1}), \, n \in \N_0
	\end{equation}
	\begin{multline}\label{eq:sojourntimes}
		\p_\nu^\bfu(\tau_{n+1} - \tau_n > t \mid \cY^{\nu,\bfu}_{\tau_n}) = \\
		\exp\biggl\{-\int_0^t \int_U r\bigl(\phi_{\pi^{\nu,\bfu}_{\tau_n}}^{u_n(\cdot \, + \, \tau_n)}(s), \fru\bigr) \, u_n(s + \tau_n, Y_0, \dots, \tau_n, Y_{\tau_n} \, ; \dd \fru) \, \dd s\biggr\}, \quad t \geq 0
	\end{multline}
	\begin{multline}\label{eq:postjumplocations}
		\p_\nu^\bfu(\pi^{\nu, \bfu}_{\tau_{n+1}} \in D \mid \cY^{\nu,\bfu}_{\tau_{n+1}^-}) = \\
		\int_U R\bigl(\phi_{\pi^{\nu, \bfu}_{\tau_n}}^{u_n(\cdot \, + \, \tau_n)}(\tau_{n+1}^- - \tau_n), \fru; D\bigr) \, u_n(\tau_{n+1}^-, Y_0, \dots, \tau_n, Y_{\tau_n} \, ; \dd \fru), \quad D \in \cB(\Delta_e)
	\end{multline}
	where, for each $n \in \N_0$, $\phi_{\pi^{\nu, \bfu}_{\tau_n}}^{u_n}$ is the flow starting from $\pi^{\nu, \bfu}_{\tau_n}$ and determined by the controlled vector field $F$ under the action of the control function $u_n(\cdot, Y_0, \dots, \tau_n, Y_{\tau_n})$.
\end{theorem}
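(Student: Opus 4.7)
The plan is to establish the three parts of the theorem separately: the deterministic flow identity \eqref{eq:filterflow} between jumps, the exponential sojourn-time formula \eqref{eq:sojourntimes}, and the post-jump kernel \eqref{eq:postjumplocations}. The unifying strategy is to obtain the $(\cY_t^{\nu,\bfu})$-predictable compensator of the counting measure of $Y$ by projecting the known $(\cX_t)$-compensator $\nu^\bfu$ of \eqref{eq:predproj} onto the observation filtration with the help of the filter, and then to invoke standard marked point process theorems (cf.\ \citep{jacod:mpp}, \citep[Appendix A2]{bremaud:pp}) to reconstruct the conditional laws of $(\tau_{n+1}-\tau_n, Y_{\tau_{n+1}})$ and hence of the post-jump value of $\pi^{\nu,\bfu}$.

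Equation \eqref{eq:filterflow} is almost immediate: between two consecutive jumps of $Y$, the filtering equation \eqref{eq:controlledfilteringprocess} says that $\pi^{\nu,\bfu}$ solves the ODE driven by $F$ with the measurable input $m(t)=u_n(t,Y_0,\ldots,\tau_n,Y_{\tau_n})$ and initial value $\pi^{\nu,\bfu}_{\tau_n}$; Proposition \ref{prop:PDPflow} then identifies this unique global solution with $\phi^{u_n}_{\pi^{\nu,\bfu}_{\tau_n}}(t-\tau_n)$.

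For \eqref{eq:sojourntimes} and \eqref{eq:postjumplocations} I would compute the $(\cY_t^{\nu,\bfu})$-compensator of the mark-$b$ counting measure $p_Y(\dd t,\{b\})=\sum_n\ind(\tau_n<\infty,\,Y_{\tau_n}=b)\delta_{\tau_n}(\dd t)$. From \eqref{eq:predproj} its $(\cX_t)$-compensator is $\ind(Y_{t-}\ne b)\sum_{j\in\pre(b)}\int_U\lambda_{X_{t-}j}(\fru)\,u_t(\dd\fru)\,\dd t$; taking the conditional expectation given $\cY_{t-}^{\nu,\bfu}$ and using $\p_\nu^\bfu(X_{t-}=i\mid\cY_{t-}^{\nu,\bfu})=\pi^{\nu,\bfu}_{t-}(i)$ produces the $(\cY_t^{\nu,\bfu})$-compensator
\[
\ind(Y_{t-}\ne b)\int_U\bigl[\pi^{\nu,\bfu}_{t-}\Lambda(\fru)\ind_{\pre(b)}\bigr]u_t(\dd\fru)\,\dd t.
\]
Summing over $b\ne Y_{t-}$ and using $\sum_j\lambda_{ij}(u)=0$ yields total jump intensity $\int_U r(\pi^{\nu,\bfu}_{t-},\fru)u_t(\dd\fru)$ with $r$ as in \eqref{eq:PDPcharacteristictriple}. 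On $\{\tau_n<\infty\}$ and conditionally on $\cY^{\nu,\bfu}_{\tau_n}$, the flow identity \eqref{eq:filterflow} lets one replace $\pi^{\nu,\bfu}_{t-}$ on $[\tau_n,\tau_{n+1})$ by the deterministic expression $\phi^{u_n(\cdot+\tau_n)}_{\pi^{\nu,\bfu}_{\tau_n}}(t-\tau_n)$; the classical exponential formula for the survival function of a point process with a prescribed intensity then yields \eqref{eq:sojourntimes}. Dividing the mark-$b$ intensity by the total intensity at time $\tau_{n+1}^-$ produces the conditional mark law, and combining this with the filter update (third line of \eqref{eq:controlledfilteringprocess}) gives exactly the kernel $R$ defined in \eqref{eq:PDPcharacteristictriple}, establishing \eqref{eq:postjumplocations}.

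The main obstacle is to render the projection step rigorous as an identity of random measures: one must verify that the $(\cY_t^{\nu,\bfu})$-optional projection of the $(\cX_t)$-predictable intensity is itself the $(\cY_t^{\nu,\bfu})$-predictable compensator, which relies on the right-continuity of the completed filtration and on the integrability of the intensities guaranteed by Assumption \ref{assumption:lambda}. A secondary technicality is the treatment of the $\p_\nu^\bfu$-null event on which $\pi^{\nu,\bfu}_{t-}\Lambda(\fru)\ind_{\pre(a)}=0$ holds for $Y_{t-}=a$: there the kernel $R$ is defined via the arbitrary objects $q_a$ and $\nu_a$, and although the event is negligible it must be accounted for explicitly to write \eqref{eq:sojourntimes}--\eqref{eq:postjumplocations} in the form above.
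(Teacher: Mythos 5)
The paper offers no proof of this theorem: Section \ref{sec:filteringprocess} states that these results generalize those of \citep{confortola:filt} and that ``the proofs are similar and will be omitted'', and your strategy --- reading (\ref{eq:filterflow}) off the filtering ODE via Proposition \ref{prop:PDPflow}, then projecting the $(\cX_t)$-compensator (\ref{eq:predproj}) onto $(\cY_t^{\nu,\bfu})$ to obtain the intensity $\int_U r(\pi^{\nu,\bfu}_{t-},\fru)\,u_t(\dd \fru)$ and invoking the marked point process reconstruction theorems --- is precisely the argument underlying that reference, so the proposal is sound and essentially coincides with the (omitted) proof. The one step deserving an explicit line is the last: for genuinely relaxed $u_t$ the jump update in (\ref{eq:controlledfilteringprocess}) applies $H_b$ to the $u$-averaged matrix $\pi_{\tau^-}\int_U\Lambda(\fru)\,u_{\tau^-}(\dd\fru)$, whereas (\ref{eq:postjumplocations}) averages $R(\cdot,\fru;D)$ over $u$, and these two expressions should be reconciled rather than asserted to agree ``exactly''.
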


\begin{rem}
	The importance of characterizing the filtering process as a Piecewise Deterministic Process (PDP for short) will be clear in the remainder of the paper. This class of processes, introduced by M.H.A.~Davis (see \eg \citep{davis:markovmodels}), has been widely studied in recent years also in connection with optimal control problems. In these problems it is customary to define the class of admissible controls as \emph{piecewise open-loop controls}. These control functions, first studied by Vermes in \citep{vermes:optcontrol}, depend at any time $t \geq 0$ on the position of the PDP at the last jump-time prior to $t$ and on the time elapsed since the last jump. 
	
	In (\ref{eq:admissiblecontrols}) we specified a different class of admissible controls, more suited to our problem and imposed by the fact that we are dealing with partial observation, hence equations (\ref{eq:filterflow}), (\ref{eq:sojourntimes}) and (\ref{eq:postjumplocations}) are changed with respect to the standard formulation with piecewise open-loop controls.
	Another element in contrast with the usual definition of a PDP is the absence in our model of a boundary, since this will be enough for our purposes.
\end{rem}

A common assumption in PDP optimal control problems is that the transition measure $R$ is a Feller kernel. This fails to happen in our situation but, nonetheless, a weaker form of this property holds and it is stated in the following Proposition.
\begin{proposition}\label{prop:weakFellerR}
	Let Assumption \ref{assumption:lambda} hold. Then for every bounded and continuous function $w \colon \Delta_e \to \R$ the function $\rho \mapsto r(\rho, u) \int_{\Delta_e} w(p) R(\rho, u; \dd p)$ is continuous on $\Delta_e$ uniformly in $u \in U$.
\end{proposition}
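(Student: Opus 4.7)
The plan is to show that the apparent discontinuity of the map $\rho \mapsto H_b[\rho\Lambda(u)]$ (arising from the arbitrary choice of $\nu_b$ when $\rho\Lambda(u)\ind_{h^{-1}(b)} = 0$) is cancelled exactly by the multiplicative factor $r(\rho, u)$ contained in $r(\rho, u) q(\rho, u, b)$. I would first substitute the explicit form of $R$ to get
\begin{equation*}
  r(\rho, u) \int_{\Delta_e} w(p)\, R(\rho, u; \dd p) = \sum_{b \in O} w\bigl(H_b[\rho\Lambda(u)]\bigr)\, r(\rho, u)\, q(\rho, u, b),
\end{equation*}
and then, for fixed $a \in O$ and $\rho \in \Delta_a$, rewrite the product $r(\rho, u)\, q(\rho, u, b)$. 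Using the definitions in \eqref{eq:PDPcharacteristictriple}, when $\rho\Lambda(u)\ind_{h^{-1}(a)} \ne 0$ the product equals $\rho\Lambda(u)\ind_{h^{-1}(b)}\ind_{b \ne a}$; when $r(\rho, u) = 0$ the product vanishes, and a short algebraic check (using that $\rho$ is supported on $h^{-1}(a)$ and $\sum_i \lambda_{ji}(u) = 0$) shows that $\rho\Lambda(u)\ind_{h^{-1}(b)} = 0$ for every $b \ne a$ as well. Hence, regardless of which branch of $q$ is active,
\begin{equation*}
  r(\rho, u) \int_{\Delta_e} w(p)\, R(\rho, u; \dd p) = \sum_{b \in O \setminus \{a\}} w\bigl(H_b[\rho\Lambda(u)]\bigr)\, \bigl(\rho\Lambda(u)\ind_{h^{-1}(b)}\bigr),
\end{equation*}
with the convention that $w\bigl(H_b[\rho\Lambda(u)]\bigr) \cdot 0 = 0$.

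Next, for each $b \ne a$ I would study the summand $G_b(\rho, u) := w\bigl(H_b[\rho\Lambda(u)]\bigr)\bigl(\rho\Lambda(u)\ind_{h^{-1}(b)}\bigr)$ on the compact space $\Delta_a \times U$ and prove joint continuity by two cases. If $\rho_0\Lambda(u_0)\ind_{h^{-1}(b)} > 0$, then by continuity of $u \mapsto \Lambda(u)$ (Assumption \ref{assumption:lambda}) and linearity in $\rho$, the denominator of $H_b$ stays positive on a neighbourhood, the formula $H_b[\rho\Lambda(u)] = (\rho\Lambda(u)\ind_{h^{-1}(b)})^{-1}\rho\Lambda(u)\mathbf{1}_{h^{-1}(b)}$ is a continuous function of $(\rho, u)$, and joint continuity of $G_b$ follows from continuity of $w$. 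If $\rho_0\Lambda(u_0)\ind_{h^{-1}(b)} = 0$, then $G_b(\rho_0, u_0) = 0$, and the bound $|G_b(\rho, u)| \leq \|w\|_\infty \cdot \rho\Lambda(u)\ind_{h^{-1}(b)}$ together with (joint) continuity of the linear map $(\rho, u) \mapsto \rho\Lambda(u)\ind_{h^{-1}(b)}$ forces $G_b(\rho, u) \to 0$. This case analysis is the technical heart of the proof and the place where the cancellation between $r$ and $q$ is essential.

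Having established joint continuity of each $G_b$ on the compact set $\Delta_a \times U$, the Heine–Cantor theorem gives joint uniform continuity, which is exactly the statement that $\rho \mapsto G_b(\rho, u)$ is continuous on $\Delta_a$ with a modulus independent of $u \in U$. Summing over $b \in O \setminus \{a\}$ preserves this property. Finally, since different simplices $\Delta_a$ and $\Delta_{a'}$ with $a \ne a'$ consist of probability measures with disjoint supports and therefore lie at mutual distance bounded below by a positive constant, the sets $\Delta_a$ are topologically separated inside $\Delta_e = \bigcup_{a \in O} \Delta_a$. Hence uniform (in $u$) continuity on each $\Delta_a$ yields the required uniform continuity on the whole $\Delta_e$, completing the proof. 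The main obstacle, as anticipated, is handling the boundary case $\rho\Lambda(u)\ind_{h^{-1}(b)} = 0$ where $H_b$ is defined arbitrarily; the observation that $r(\rho, u) q(\rho, u, b)$ telescopes to the linear, continuous quantity $\rho\Lambda(u)\ind_{h^{-1}(b)}$ is what makes everything work.
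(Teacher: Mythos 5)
Your proof is correct, and it reaches the conclusion by a genuinely different mechanism than the paper's. Both arguments begin with the same key cancellation, namely that $r(\rho,u)\,q(\rho,u,b) = \rho\Lambda(u)\ind_{\pre(b)}$ for $b \neq a$ (with both sides vanishing when $r(\rho,u)=0$, since then $\rho\Lambda(u)\ind_{\pre(b)} \geq 0$ for each $b\neq a$ and these nonnegative terms sum to zero), which removes the arbitrary branch of $H_b$ from the picture. From there the paper argues by explicit estimates: it fixes a sequence $\rho_n \to \rho$ in $\Delta_a$, splits $O$ into the indices $b$ for which the denominator of $H_b$ is positive and those for which it vanishes, further decomposes $\pre(a)$ into the coordinates where $\rho_i > 0$ and where $\rho_i = 0$, and bounds $\bigl|H_b[\rho_n\Lambda(u)] - H_b[\rho\Lambda(u)]\bigr|$ by quantities such as $\sum_{i}\abs{\rho_i - \rho_i^n}/\rho_i$, feeding the result into a modulus of continuity of $w$ and checking uniformity in $u$ term by term. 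You instead prove joint continuity of each summand $G_b(\rho,u) = w(H_b[\rho\Lambda(u)])\,\rho\Lambda(u)\ind_{\pre(b)}$ on the compact set $\Delta_a \times U$ --- distinguishing the points where $\rho_0\Lambda(u_0)\ind_{\pre(b)} > 0$ (so $H_b$ is locally a quotient of jointly continuous functions with nonvanishing denominator) from those where it is zero (so the nonnegative factor $\rho\Lambda(u)\ind_{\pre(b)}$ forces $G_b \to 0$ regardless of what $H_b$ does) --- and then invoke Heine--Cantor to convert joint continuity into a modulus independent of $u$; the clopen decomposition $\Delta_e = \cup_a \Delta_a$ handles the passage from $\Delta_a$ to $\Delta_e$. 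Your route is shorter, avoids the index-set bookkeeping entirely, and sidesteps the fact that in the paper's argument the set $B$ of ``good'' indices itself depends on $u$, which makes the term-by-term uniformity claims there somewhat delicate to track. What the soft argument gives up is any explicit rate: the paper's computation exhibits concrete bounds in terms of $\abs{\rho - \rho_n}$, whereas compactness only yields existence of a modulus. Since the proposition is used later (in Lemma~\ref{lemma:JTwestimate} and Theorem~\ref{th:valuefunctionviscsol}) only through the existence of some $u$-independent modulus, your argument is fully sufficient for the purposes of the paper.
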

\begin{proof}
	Fix $\rho \in \Delta_e$, \ie $\rho \in \Delta_a$ for some $a \in O$, and $u \in U$. Let $(\rho_n)_{n \in \N}$ be a sequence such that $\rho_n \to \rho$ as $n \to +\infty$. Without loss of generality we can assume that $\rho_n \in \Delta_a$ for all $n \in \N$. 
	
	Let us consider, first, the case where $r(\rho,u) > 0$. It is easy to see that the function $\rho \mapsto r(\rho,u)$ is continuous on $\Delta_e$ uniformly in $u \in U$, therefore $r(\rho_n,u) > 0$ apart from a finite number of indices $n \in \N$. We want to prove that
	\begin{equation*}
	\biggl| r(\rho_n,u)\int_{\Delta_e} w(p) R(\rho_n,u;\dd p) - r(\rho,u)\int_{\Delta_e} w(p) R(\rho,u;\dd p) \biggl| \to 0, \quad \text{as } n \to +\infty,
	\end{equation*}
	uniformly with respect to $u \in U$, \ie
	\begin{equation}\label{eq:weakFellerRclaim}
	\biggl| \sum_{a \neq b \in O} \sum_{i \in \pre(a)} \sum_{j \in \pre(b)} \Bigl\{w(H_b[\rho_n \Lambda^u]) \rho_i^n - w(H_b[\rho \Lambda^u]) \rho_i \Bigr\} \lambda_{ij}(u) \biggr| \to 0,
	\end{equation}
	as $n \to +\infty$ uniformly in $u \in U$. 
	
	Let $B = \{b \in O, b \neq a, \text{ such that }\Sum_{i \in \pre(a)} \Sum_{j \in \pre(b)} \rho_i \lambda_{ij}(u) > 0 \}$ and $B_0 = B^c \setminus \{a\}$. Clearly, reasoning as before, we have that $\Sum_{i \in \pre(a)} \Sum_{j \in \pre(b)} \rho_i^n \lambda_{ij}(u) > 0$, apart from a finite number of indices $n \in \N$, for each $b \in B$. Moreover $\Sum_{i \in \pre(a)} \Sum_{j \in \pre(b)} \rho_i^n \lambda_{ij}(u) \to 0$ for all $b \in B_0$ uniformly in $u$. Then from equation (\ref{eq:weakFellerRclaim}) we can get the estimate 
	\begin{equation}\label{eq:weakFellerRestimate}
	\begin{split}
	&\biggl| \sum_{b \in B} \sum_{i \in \pre(a)} \sum_{j \in \pre(b)} \Bigl\{w(H_b[\rho_n \Lambda^u]) \rho_i^n - w(H_b[\rho \Lambda^u]) \rho_i \Bigr\} \lambda_{ij}(u) \\ 
	&\qquad + \sum_{b \in B_0} \sum_{i \in \pre(a)} \sum_{j \in \pre(b)} w(H_b[\rho_n \Lambda^u]) \rho_i^n \lambda_{ij}(u) \biggr| \\
	\leq &\, C_\Lambda \sum_{b \in B} \bigl| w(H_b[\rho_n \Lambda^u]) - w(H_b[\rho \Lambda^u]) \bigr| + |B|C_\Lambda \sup_{p \in \Delta_e}|w(p)| \sum_{i \in \pre(a)} \bigl| \rho_i^n - \rho_i \bigr| \\
	&\qquad + \sup_{p \in \Delta_e}|w(p)| \sum_{b \in B_0} \sum_{i \in \pre(a)} \sum_{j \in \pre(b)} \rho_i^n \lambda_{ij}(u),
	\end{split}
	\end{equation}
	where $C_\Lambda = \max_{i \in \pre(a)} \sup_{u \in U} \lambda_i(u)$ is finite thanks to Assumption \ref{assumption:lambda}.
	
	It is clear that the second and the third summand of the last inequality tend to $0$, as $n$ goes to infinity, uniformly in $u$. The difficult task is to show that this is the case also for the first summand. Since the function $w$ is continuous on $\Delta_e$, there exists a modulus of continuity $\eta_w$ such that
	\begin{equation}\label{eq:modulusetaHb}
	\sum_{b \in B} \bigl| w(H_b[\rho_n \Lambda^u]) - w(H_b[\rho \Lambda^u]) \bigr| \leq \sum_{b \in B} \eta_w(\bigl| H_b[\rho_n \Lambda^u] - H_b[\rho \Lambda^u] \bigr|).
	\end{equation}
	Therefore we can fix $b \in B$ and concentrate ourselves on the term $\bigl| H_b[\rho_n \Lambda^u] - H_b[\rho \Lambda^u] \bigr|$. We need to show that
	\begin{multline}\label{eq:operatorHdiff}\textstyle
	\bigl| H_b[\rho_n \Lambda^u] - H_b[\rho \Lambda^u] \bigr| = \sum_{j \in \pre(b)} \Biggl| \frac{\Sum_{i \in \pre(a)} \rho_i \lambda_{ij}(u)}{\Bigl. \Sum_{k \in \pre(a)} \Sum_{l \in \pre(b)} \rho_k \lambda_{kl}(u)\Bigr.} - \frac{\Sum_{i \in \pre(a)} \rho_i^n \lambda_{ij}(u)}{\Bigl. \Sum_{k \in \pre(a)} \Sum_{l \in \pre(b)} \rho_k^n \lambda_{kl}(u)\Bigr. }\Biggr|\\ \textstyle
	= \frac{ \Sum_{j \in \pre(b)} \Bigl| \Sum_{i \in \pre(a)} \Bigl\{ \rho_i \lambda_{ij}(u) \Sum_{k \in \pre(a)} \Sum_{l \in \pre(b)} \rho_k^n \lambda_{kl}(u) - \rho_i^n \lambda_{ij}(u) \Sum_{k \in \pre(a)} \Sum_{l \in \pre(b)} \rho_k \lambda_{kl}(u) \Bigr\}\Bigr|}{\Bigl(\Sum_{k \in \pre(a)} \Sum_{l \in \pre(b)} \rho_k \lambda_{kl}(u) \Bigr) \Bigl( \Sum_{k \in \pre(a)} \Sum_{l \in \pre(b)} \rho_k^n \lambda_{kl}(u) \Bigr)}
	\end{multline}
	tends to $0$ as $n \to +\infty$ uniformly in $u$.
	
	Let $A_0 = \{i \in \pre(a) \text{ such that } \rho_i = 0\}$ and $A = \pre(a) \setminus A_0$. It is obvious that $\Sum_{k \in \pre(a)} \Sum_{l \in \pre(b)} \rho_k \lambda_{kl}(u) = \Sum_{k \in A} \Sum_{l \in \pre(b)} \rho_k \lambda_{kl}(u)$. Using such a decomposition of the set $\pre(a)$ the numerator of (\ref{eq:operatorHdiff}) can be estimated in the following way
	\begin{multline*}
	\Sum_{j \in \pre(b)} \Biggl| \Sum_{i \in \pre(a)} \Biggl\{ \rho_i \lambda_{ij}(u) \Sum_{k \in \pre(a)} \Sum_{l \in \pre(b)} \rho_k^n \lambda_{kl}(u) - \rho_i^n \lambda_{ij}(u) \Sum_{k \in \pre(a)} \Sum_{l \in \pre(b)} \rho_k \lambda_{kl}(u) \Biggr\}\Biggr| \\
	= \Sum_{j \in \pre(b)} \Biggl| \Sum_{i \in A} \rho_i \lambda_{ij}(u) \Sum_{k \in \pre(a)} \Sum_{l \in \pre(b)} \rho_k^n \lambda_{kl}(u) - \Sum_{i \in \pre(a)} \rho_i^n \lambda_{ij}(u) \Sum_{k \in A} \Sum_{l \in \pre(b)} \rho_k \lambda_{kl}(u) \Biggr| \\
	\leq 2 \Sum_{i \in A} |\rho_i - \rho_i^n| \Sum_{j \in \pre(b)} \lambda_{ij}(u) \Sum_{k \in \pre(a)} \Sum_{l \in \pre(b)} \rho_k^n \lambda_{kl}(u) \\
	\qquad \quad + 2 \Sum_{i \in A} \Sum_{k \in A_0} \Sum_{j,l \in \pre(b)} \rho_i^n \rho_k^n \lambda_{ij}(u) \lambda_{kl}(u).
	\end{multline*}
	Therefore we obtain
	\begin{multline} \label{eq:Hbestimate}
	\bigl| H_b[\rho_n \Lambda^u] - H_b[\rho \Lambda^u] \bigr| \\
	\leq 2 \frac{\Sum_{i \in A} |\rho_i - \rho_i^n| \Sum_{j \in \pre(b)} \lambda_{ij}(u)}{\Bigl. \Sum_{i \in A} \Sum_{j \in \pre(b)} \rho_i \lambda_{ij}(u) \Bigr.} + 2 \frac{\Sum_{i \in A} \Sum_{k \in A_0} \Sum_{j,l \in \pre(b)} \rho_i^n \rho_k^n \lambda_{ij}(u) \lambda_{kl}(u)}{\Bigl. \Sum_{i \in A} \Sum_{k \in A} \rho_i \rho_k^n \Sum_{j,l \in \pre(b)}\lambda_{ij}(u) \lambda_{kl}(u) \Bigr.}. 
	\end{multline}	
	
	We are left to prove that the two terms appearing in (\ref{eq:Hbestimate}) tend to zero uniformly in $u$. It suffices to rewrite them in a suitable way, exploiting the properties granted by the decomposition $\pre(a) = A \cup A_0$.
	As for the first summand:
	\begin{multline*}
	\frac{\Sum_{i \in A} |\rho_i - \rho_i^n| \Sum_{j \in \pre(b)} \lambda_{ij}(u)}{\Bigl. \Sum_{i \in A} \Sum_{j \in \pre(b)} \rho_i \lambda_{ij}(u)\Bigr.} = 
	\Sum_{i \in A} \frac{|\rho_i - \rho_i^n| \Sum_{j \in \pre(b)} \lambda_{ij}(u)}{\Bigl. \rho_i \Sum_{j \in \pre(b)} \lambda_{ij}(u)\Bigr.} \, \underbrace{\frac{\rho_i \Sum_{j \in \pre(b)} \lambda_{ij}(u)}{\Bigl. \Sum_{i \in A} \Sum_{j \in \pre(b)} \rho_i \lambda_{ij}(u)\Bigr.}}_{\leq 1 \, \forall u \in U} \\
	\leq \Sum_{i \in A} \frac{|\rho_i - \rho_i^n|}{\rho_i} \to 0 \text{ uniformly in } u \in U. 
	\end{multline*}
	Finally, for the second summand
	\footnote{Provided that $\Sum_{j,l \in \pre(b)} \lambda_{ij}(u) \lambda_{kl}(u) \neq 0$ for all $i \in A$ and all $k \in A_0$. If this is not the case for some indices $i,k$, one can just exclude these indices from the sum appearing in the numerator of this term}:
	\begin{multline*}
	\frac{\Sum_{i \in A} \Sum_{k \in A_0} \Sum_{j,l \in \pre(b)} \rho_i^n \rho_k^n \lambda_{ij}(u) \lambda_{kl}(u)}{\Bigl. \Sum_{i \in A} \Sum_{k \in A} \rho_i \rho_k^n \Sum_{j,l \in \pre(b)} \lambda_{ij}(u) \lambda_{kl}(u)\Bigr.} = \\ 
	\Sum_{i \in A} \Sum_{k \in A_0} \frac{\rho_i^n \rho_k^n \Sum_{j,l \in \pre(b)}  \lambda_{ij}(u) \lambda_{kl}(u)}{\Bigl. \rho_i \rho_i^n \Sum_{j,l \in \pre(b)} \lambda_{ij}(u) \lambda_{kl}(u)\Bigr.} \, \underbrace{\frac{\rho_i \rho_i^n \Sum_{j,l \in \pre(b)} \lambda_{ij}(u) \lambda_{kl}(u)}{\Bigl. \Sum_{i \in A} \Sum_{k \in A} \rho_i \rho_k^n \Sum_{j,l \in \pre(b)} \lambda_{ij}(u) \lambda_{kl}(u)\Bigr.}}_{\leq 1 \, \forall u \in U} \\
	\leq \Sum_{i \in A} \Sum_{k \in A_0} \frac{\rho_k^n}{\rho_i} \to 0 \text{ uniformly in } u \in U. 
	\end{multline*}
	Combining the result just obtained with equations (\ref{eq:weakFellerRestimate}) and (\ref{eq:modulusetaHb}) we get the claim in the case $r(\rho,u) > 0$.
	
	The case $r(\rho,u) = 0$ is much less cumbersome to analyze. Without loss of generality we can assume that the sequence $r(\rho_n,u) \neq 0$ starting from some index $n$ on (the case in which the sequence is equal to $0$ eventually is trivial). We have to prove that
	\begin{equation*}
	\biggl| r(\rho_n,u)\int_{\Delta_e} w(p) R(\rho_n,u;\dd p)\biggl| = \biggl| \sum_{a \neq b \in O} w(H_b[\rho_n \Lambda^u]) \sum_{i \in \pre(a)} \sum_{j \in \pre(b)} \rho_i^n \lambda_{ij}(u) \biggr|
	\end{equation*}
	tends to zero, as $n$ tends to infinity, uniformly with respect to $u \in U$. Thanks to the boundedness of the function $w$ we immediately get
	\begin{multline} \label{eq:caser0estimate}
	\biggl| \sum_{a \neq b \in O} w(H_b[\rho_n \Lambda^u]) \sum_{i \in \pre(a)} \sum_{j \in \pre(b)} \rho_i^n \lambda_{ij}(u) \biggr|\\
	\leq \sup_{p \in \Delta_e} |w(p)| \sum_{a \neq b \in O} \sum_{i \in \pre(a)} \sum_{j \in \pre(b)} \rho_i^n \lambda_{ij}(u).
	\end{multline}
	The properties of the matrix coefficients $\lambda_{ij}(u)$ ensure that
	\begin{equation*}
	0 = \underbrace{\sum_{i \in \pre(a)} \sum_{j \in \pre(a)} \rho_i \lambda_{ij}(u)}_{r(\rho,u)=0} +\sum_{a \neq b \in O} \underbrace{\sum_{i \in \pre(a)} \sum_{j \in \pre(b)} \rho_i \lambda_{ij}(u)}_{\geq 0,\, \forall b \neq a}.
	\end{equation*}
	Therefore the terms $\Sum_{i \in \pre(a)} \Sum_{j \in \pre(b)} \rho_i \lambda_{ij}(u)$ are equal to zero for all $b \neq a$ and since $\rho_i^n \Sum_{j \in \pre(b)} \lambda_{ij}(u) \to \rho_i \Sum_{j \in \pre(b)} \lambda_{ij}(u) = 0$ for all $i \in \pre(a)$ and all $b \neq a$ uniformly in $u$, we get the desired result from equation (\ref{eq:caser0estimate}).
\end{proof}

We can now turn our attention back to the optimal control problem.
We recall that the aim is to minimize the cost functional $J$ defined in (\ref{eq:costfunctional}),
\ie to study the value function $V$ defined in (\ref{eq:valuefunction}).
Since the control processes $\bfu$ are 
$(\cY_t)_{t \geq 0}$--predictable and we know that the filtering process $\pi^{\mu, \bfu}$
provides us with the conditional law of $X_t$ given $\cY_t$, for all $t \geq 0$, it is easy to show that
\begin{equation} \label{eq:costfunctionalmod}
J(\mu, \bfu) = \e_\mu^{\bfu} \biggl[ \int_0^\infty e^{-\beta t} \pi_t^{\mu, \bfu} \int_U \bff(\fru) \, u_t(\dd \fru) \, \dd t \biggr].
\end{equation}

Evidently, this form of the functional $J$ has the advantage of depending on completely observable processes, namely $\pi^{\mu, \bfu}$ and $\bfu$ (which in turn depends on $Y$), so that we have turned the optimal control problem for the Markov chain $X$ into an optimal control problem for the PDP $\pi^{\mu, \bfu}$. Moreover, we can write $J$ in a way that allow us to interpret our problem as a discrete-time control problem. Exploiting the structure of admissible controls $\bfu \in \cU_{ad}$, it is easy to see that
\begin{equation} \label{eq:functionalJdiscrete}
\begin{split}
&J(\mu, \bfu) = \e_\mu^\bfu
\biggl[ \sum_{n = 0}^{+\infty} \int_{\tau_n}^{\tau_{n+1}} e^{-\beta t} \pi^{\mu, \bfu}_t \int_U \bff(\fru) \, u_n(t, Y_0, \tau_1, Y_{\tau_1}, \dots, \tau_n, Y_{\tau_n} \, ; \dd \fru) \, \dd t \biggr] \\
= &\e_\mu^\bfu
\biggl[ \sum_{n = 0}^{+\infty} e^{-\beta \tau_n}\int_0^{\tau_{n+1}-\tau_n} e^{-\beta t} \phi_{\pi^{\mu, \bfu}_{\tau_n}}^{u_n(\cdot \, + \, \tau_n)}(t) \int_U \bff(\fru) \, u_n(t+\tau_n, Y_0, \tau_1, Y_{\tau_1}, \dots, \tau_n, Y_{\tau_n} \, ; \dd \fru)\, \dd t \biggr]\\
= &\sum_{n = 0}^{+\infty} \e_\mu^\bfu \biggl[\e_\mu^\bfu \biggl[
e^{-\beta \tau_n}\!\!\!\int_0^{\tau_{n+1}-\tau_n} \!\!\!e^{-\beta t} \phi_{\pi^{\mu, \bfu}_{\tau_n}}^{u_n(\cdot \, + \, \tau_n)}(t)\!\! \int_U \bff(\fru) u_n(t+\tau_n, Y_0, \dots, \tau_n, Y_{\tau_n} ; \dd \fru)\, \dd t \mid \cY^{\mu,\bfu}_{\tau_n} \biggr] \biggr]\\
= &\e_\mu^\bfu
\biggl[ \sum_{n = 0}^{+\infty} e^{-\beta \tau_n} \int_0^{+\infty} e^{-\beta t} \chi_{\pi^{\mu, \bfu}_{\tau_n}}^{u_n(\cdot \, + \, \tau_n)}(t) \phi_{\pi^{\mu, \bfu}_{\tau_n}}^{u_n(\cdot \, + \, \tau_n)}(t) \int_U \bff(\fru) \, u_n(t+\tau_n, Y_0, \dots, \tau_n, Y_{\tau_n} \, ; \dd \fru) \, \dd t \biggr]\\
= &\e_\mu^\bfu
\biggl[\sum_{n = 0}^{+\infty} e^{-\beta \tau_n} g\bigl(\pi^{\mu,\bfu}_{\tau_n}, u_n(\cdot+\tau_n, Y_0, \tau_1, Y_{\tau_1}, \dots, \tau_n, Y_{\tau_n})\bigr) \biggr]
\end{split}
\end{equation}
where the function $g$ (that will be defined precisely in Section \ref{sec:pdpoptcontrol}) represents the double integral appearing in the fourth line and $\chi_{\pi^{\mu, \bfu}_{\tau_n}}^{u_n(\cdot \, + \, \tau_n)}$ is the survival distribution appearing in (\ref{eq:sojourntimes}).

Unfortunately, the reformulated problem does not fit in the framework of a classical discrete-time optimal control problem (see \eg \citep{bertsekas:stochoptcontrol}) for various reasons. For instance, the problem should be based only on the discrete-time process given by the couples of jump times and jump locations of the filtering process $\pi^{\mu,\bfu}$ (notice that in (\ref{eq:functionalJdiscrete}) also the process $Y$ appears) which, in turn, should not depend on the initial law of the process $X$ and on the control trajectory $\bfu$. Moreover, the class of admissible controls $\cU_{ad}$ is not adequate for a discrete-time problem.
We will solve these issues by reformulating our original control problem into a discrete-time one for the filtering process, introducing also a new class of controls strongly related to the family $\cU_{ad}$. 

\section{The discrete-time PDP optimal control problem} \label{sec:pdpoptcontrol}
In this Section we will reformulate the original optimal control problem into a discrete-time one for the filtering process. This reformulation will fall in the framework of \citep{bertsekas:stochoptcontrol} (from which we will borrow some terminology), a fact that enables us to use known results to study the value function $V$ defined in (\ref{eq:valuefunction}). We will prove that the original control problem and the discrete-time one are deeply connected. In particular, we will show that the value function $V$ can be indirectly characterized by its discrete-time counterpart, that will be analyzed in detail in the next Section.

Let us introduce the \emph{action space}
\begin{equation} \label{eq:actionspace}
\cM = \{m \colon [0, +\infty) \to \cP(U), \text{ measurable}\}
\end{equation}
whose elements are \emph{relaxed controls}. It is known that this space endowed with the \emph{Young topology} is compact (see \eg \citep{davis:markovmodels}).
As already pointed out in Remark \ref{rem:relaxedcontrols}, the set of \emph{ordinary controls}
\begin{equation*}
A = \{\alpha \colon [0, +\infty) \to U, \text{ measurable}\}
\end{equation*}
can be identified as a subset of $\cM$ via the function $t \mapsto \delta_{\alpha(t)}$, $\alpha \in A$, where $\delta_u$ denotes the Dirac probability measure concentrated at the point $u \in U$. As proved in \citep[Lemma 1]{yushkevich:jumpmodel}, this set becomes a \emph{Borel space} when endowed with the coarsest $\sigma$-algebra such that the maps
\begin{equation*}
	\alpha \mapsto \int_0^{+\infty} e^{-t} \psi(t, \alpha(t)) \, \dd t
\end{equation*}
are measurable for all $\psi \colon [0, +\infty) \times U \to \R$, bounded and measurable. This is a fundamental fact to be used in the sequel.
Finally, we define the class of \emph{admissible policies} $\cA_{ad}$ for the \mbox{discrete-time} optimal control problem as
\begin{equation}
 \cA_{ad} = \{\bfa = (a_n)_{n \in \bar \N_0}, a_n \colon \Delta_e \times \bigl((0, +\infty] \times \Delta_e\bigr)^n \to \cM \text{ measurable } \forall n \in \bar \N_0\}.
\end{equation}

We are now ready to introduce the discrete-time PDP optimal control problem. As done in the previous section, we need to put ourselves in a canonical framework for the filtering process and define the following objects.
\begin{itemize}
	\item $\bar{\Omega} = \{\bar{\omega} \colon [0, +\infty) \to \Delta_e, \text{ c\'adl\'ag}\}$ denotes the canonical space for \mbox{$\Delta_e$~--~valued} PDPs. We define $\bar{\pi}_t(\bar{\omega}) = \bar{\omega}(t)$, for $\bar{\omega} \in \bar{\Omega}$, $t \geq 0$, and
	\begin{align*}
		\bar \tau_0(\bar \omega) &= 0, \\
		\bar \tau_n(\bar \omega) &= \inf\{t > \bar \tau_{n-1}(\bar \omega) \text{ s.t. } \bar \pi_t(\bar \omega) \ne \bar \pi_{t^-}(\bar \omega)\}, \quad n \in \N, \\
		\bar \tau_\infty(\bar \omega) &= \lim_{n \to \infty} \bar \tau_n(\bar \omega).
	\end{align*}
	\item The family of $\sigma$-algebras $(\bar{\cF}_t^0)_{t \geq 0}$ given by
	\begin{equation*}
	\bar{\cF}_t^0 = \sigma(\bar{\pi}_s, 0 \leq s \leq t), \quad
	\bar{\cF}^0 = \sigma(\bar{\pi}_s, s \geq  0),
	\end{equation*}
	is the natural filtration of the process $\bar{\pi} = (\bar{\pi}_t)_{t \geq 0}$.
	\item For every $\nu \in \Delta_e$ and all $\bfa \in \cA_{ad}$ we denote by $\bar \p_\nu^\bfa$ the probability measure on $(\bar \Omega, \bar{\cF}^0)$ such that the process $\bar \pi$ is a PDP, starting from the point $\nu$ and with characteristic triple $(F, r, R)$. We this, we mean that $\bar \p_\nu^\bfa$--a.s.
	\begin{equation}\label{eq:flow}
	\bar \pi_t = \phi_{\bar \pi_{\bar \tau_n}}^{a_n}(t-\bar \tau_n), \quad t \in [\bar \tau_n, \bar \tau_{n+1}), \, n \in \N_0.
	\end{equation}
	\begin{multline}\label{eq:sojourntimesPDP}
	\bar \p_\nu^\bfa(\bar \tau_{n+1} - \bar \tau_n > t \mid \bar \cF^0_{\bar \tau_n}) = \\
	\exp\biggl\{-\int_0^t \int_U r(\phi_{\bar \pi_{\bar \tau_n}}^{a_n}(t),\fru) \, a_n(\bar \pi_0, \dots, \bar \tau_n, \bar \pi_{\bar \tau_n})(s \, ; \dd \fru) \, \dd s\biggr\}, \quad t \geq 0.
	\end{multline}
	\begin{multline}\label{eq:postjumplocationsPDP}
	\bar \p_\nu^\bfa(\bar \pi_{\bar \tau_{n+1}} \in D \mid \bar \cF^0_{\bar \tau_{n+1}^-}) = \\
	\int_U R(\phi_{\bar \pi_{\bar \tau_n}}^{a_n}(\bar \tau_{n+1}^- - \bar \tau_n), \fru; D) \, a_n(\bar \pi_0, \dots, \bar \tau_n, \bar \pi_{\bar \tau_n})(\bar\tau_{n+1}^- - \bar\tau_n \, ; \dd \fru),  \quad D \in \cB(\Delta_e).
	\end{multline}
	where, for each $n \in \N_0$, $\phi_{\bar \pi_{\bar \tau_n}}^{a_n}$ is the flow starting from $\bar \pi_{\bar \tau_n}$ and determined by the controlled vector field $F$ under the action of the relaxed control $a_n(\bar \pi_0, \dots, \bar \tau_n, \bar \pi_{\bar \tau_n})$.
	We recall that this probability measure always exists by the canonical construction of a PDP (see \citep[Sec. 24]{davis:markovmodels}).
	\item For every $Q \in \cP(\Delta_e)$ and every $\bfa \in \cA_{ad}$ we define a probability $\bar{\p}_Q^\bfa$ on $(\bar{\Omega}, \bar{\cF^0})$ by 
	$\bar{\p}_Q^\bfa(C) = \int_{\Delta_e} \bar{\p}_\nu^\bfa(C) \, Q(\dd\nu)$ for $C \in \bar{\cF^0}$. This means that $Q$ is the initial distribution of $\bar{\pi}$ under $\bar{\p}_Q^\bfa$.
	\item Let $\bar\cF^{Q, \bfa}$ be the $\bar{\p}_Q^\bfa$-completion of $\bar\cF^0$. We still denote by $\bar{\p}_Q^\bfa$ the measure naturally extended to this new $\sigma$-algebra.
	Let $\bar{\cZ}^{Q, \bfa}$ be the family of sets in $\bar{\cF}^{Q, \bfa}$ with zero $\bar{\p}_Q^\bfa$-probability and define
	\begin{equation*}
	\bar{\cF}_t^{Q, \bfa} = \sigma(\bar{\cF}_t^0 \cup \bar{\cZ}^{Q,\bfa}), \quad 
	\bar\cF_t = \intertwo{Q \in \cP(\Delta_e)}{\bfa \in \cA_{ad}} \bar{\cF}_t^{Q,\bfa}, \quad
	t \geq 0.
	\end{equation*}
	$(\bar{\cF}_t)_{t\geq 0}$ is called the \emph{natural completed filtration} of $\bar{\pi}$.
	By a slight generalization of \citep[Th. 25.3]{davis:markovmodels} it is \mbox{right-continuous}.
\end{itemize}
The PDP $(\bar{\Omega}, \bar{\cF}, (\bar{\cF}_t)_{t \geq 0}, (\bar{\pi}_t)_{t \geq 0}, (\bar{\p}_\nu^\bfa)_{\nu \in \Delta_e}^{\bfa \in \cA_{ad}})$ constructed as above admits the characteristic triple $(F, r, R)$.
For sake of brevity, let us introduce the function $\chi_\rho^m$, depending on $\rho \in \Delta_e$ and $m \in \cM$, given by
\begin{equation}
\chi_\rho^m(t) = \exp\biggl\{-\int_0^t \int_U r(\phi_\rho^m(s), \fru) \, m(s \, ; \dd \fru) \, \dd s\biggr\}, \quad t \geq 0.
\end{equation}
In this way, we can write (\ref{eq:sojourntimesPDP}) as
\begin{equation*}
	\bar \p_\nu^\bfa(\bar \tau_{n+1} - \bar \tau_n > t \mid \bar \cF^0_{\bar \tau_n}) = \chi_\nu^{a_n}(t), \quad t \geq 0.
\end{equation*}
It is worth noticing that $\chi_\rho^m$ solves the ODE
\begin{equation} 
\begin{dcases}
\frac{\dd}{\dd t} z(t) = -z(t) \int_U r(\phi_\rho^m(t), \fru)  \, m(t \, ;\dd \fru), \quad t \geq 0 \\
z(0) = 1
\end{dcases}
.
\end{equation} 

We define the observed process $\bar Y$ on $\bar \Omega$ as follows. Let us introduce the (obviously measurable) function $\projY \colon \Delta_e \to O$ given by
\begin{equation*}
\projY(p) = a, \quad \text{if } p \in \Delta_a, \text{ for some } a \in O
\end{equation*}
and set
\begin{equation*}
	\bar Y_t(\bar\omega) =
	\begin{cases}
		\projY(\bar\pi_0(\bar\omega)),	& t \in \bigl[0, \bar\tau_1(\bar\omega)\bigr) \\
		\projY(\bar\pi_{\bar\tau_n(\bar{\omega})}(\bar\omega)),	& t \in \bigl[\bar\tau_n(\bar\omega), \bar\tau_{n+1}(\bar\omega)\bigr), \, n \in \N, \, \bar\tau_n(\bar\omega) < +\infty \\
		o_\infty,						& t \in \bigl[\bar\tau_\infty(\bar\omega), +\infty), \, \bar\tau_\infty(\bar\omega) < +\infty
	\end{cases}
	\, ,
\end{equation*}
where $o_\infty \in O$ is an arbitrary state, that is irrelevant to specify.
In fact, it is easy to prove by standard arguments that under Assumption \ref{assumption:lambda} for each fixed $\nu \in \Delta_e$ and $\bfa \in \cA_{ad}$ we have that $\bar \tau_\infty = +\infty$, \mbox{$\bar \p_\nu^\bfa$--a.s.}, \ie also in this framework the observed process is \mbox{$\bar \p_\nu^\bfa$--a.s.} non explosive.

Next, let us define the PDP counterpart of the functional $J$, appearing in (\ref{eq:costfunctional}), as follows.
Let $g \colon \Delta_e \times \cM \to \R$ be the \mbox{discrete-time} one-stage cost function defined as
\begin{equation}\label{eq:costfunctiondiscrete}
	g(\nu, m) = \int_0^{+\infty} e^{-\beta t} \chi_\nu^m(t) \phi_\nu^m(t) \int_U \bff(\fru) \, m(t \, ; \dd \fru) \, \dd t.
\end{equation} 
For each $\nu \in \Delta_e$ and $\bfa \in \cA_{ad}$ the PDP cost functional $\bar J$ is defined in analogy with the last line of (\ref{eq:functionalJdiscrete}) as
\begin{equation} \label{eq:pdpcostfunctional}
\bar J(\nu, \bfa) = \bar{\e}_\nu^\bfa 
\biggl[ \sum_{n = 0}^{+\infty} e^{-\beta \bar \tau_n} g\bigl(\bar \pi_{\bar \tau_n}, a_n(\bar \pi_{\bar \tau_0}, \dots, \bar \tau_n, \bar \pi_{\bar \tau_n})\bigr) \biggr].
\end{equation}
Finally, we define the PDP value function as
\begin{equation}\label{eq:pdpvaluefunction}
v(\nu) = \inf_{\bfa \in \cA_{ad}} \bar J(\nu, \bfa).
\end{equation} 

It is now fundamental to establish a connection between the cost functionals (\ref{eq:costfunctionalmod}) and (\ref{eq:pdpcostfunctional}). This link will be given by constructing corresponding admissible controls in $\cU_{ad}$ and admissible policies in $\cA_{ad}$.

\begin{theorem}\label{th:costfunctionalidentif}
	Fix $\mu \in \Delta$ and let $Q \in \cP(\Delta_e)$ the Borel probability measure on $\Delta_e$ concentrated at points $H_a[\mu] \in \Delta_e$, as $a$ varies in the set $O$, defined as
	\begin{equation} \label{eq:probabilityQ}
		Q(D) = \sum_{a \in O} \mu(\pre(a)) \delta_{H_a[\mu]}(D), \quad D \in \cB(\Delta_e).
	\end{equation}
	
	For all $\mu \in \Delta$ and all $\bfu \in \cU_{ad}$ there exists an admissible policy $\bfa \in \cA_{ad}$ such that the laws of $\pi^{\mu, \bfu}$ under $\p_\mu^\bfu$ and of $\bar \pi$ under $\bar \p_Q^\bfa$ are the same. Moreover, for such an admissible policy
	\begin{equation} \label{eq:functionalJequality}
		J(\mu, \bfu) = \sum_{a \in O} \mu(\pre(a)) \bar J(H_a[\mu], \bfa).
	\end{equation}
	Viceversa, for all $\mu \in \Delta$ and all $\bfa = (a_n)_{n \in \N} \in \cA_{ad}$ there exists an admissible control $\bfu \in \cU_{ad}$ such that the same conclusions hold.
\end{theorem}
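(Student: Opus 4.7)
The plan is to explicitly construct the correspondence in both directions. Two observations underpin everything: (i) the observation process is recoverable from the filter via $Y_t = \projY(\pi^{\mu,\bfu}_t)$ $\p_\mu^\bfu$-a.s., so any function of $(Y_0, \tau_1, Y_{\tau_1}, \dots, \tau_n, Y_{\tau_n})$ is automatically a function of $(\pi^{\mu,\bfu}_0, \tau_1, \pi^{\mu,\bfu}_{\tau_1}, \dots, \tau_n, \pi^{\mu,\bfu}_{\tau_n})$; (ii) conversely, once the observed trajectory $(y_0, t_1, y_1, \dots)$ and the previously applied controls are fixed, the filter is pathwise determined by the filtering equation (\ref{eq:controlledfilteringprocess}). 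Law identification will then follow by matching the PDP characteristic triple $(F,r,R)$ granted by Theorem \ref{th:filteringprocessPDP} against the canonical construction of $\bar\pi$ under $\bar\p_Q^\bfa$, and invoking uniqueness of PDP laws (\citep[Sec.~24]{davis:markovmodels}).

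\textbf{From $\bfu$ to $\bfa$.} Given $\bfu \in \cU_{ad}$ with decomposition $(u_n)$ as in (\ref{eq:controlsrepresentation}), I define $\bfa = (a_n) \in \cA_{ad}$ componentwise by
\begin{equation*}
a_n(\rho_0, t_1, \rho_1, \dots, t_n, \rho_n)(s \, ; \dd \fru) \coloneqq u_n\bigl(s + t_n, \projY(\rho_0), t_1, \projY(\rho_1), \dots, t_n, \projY(\rho_n)\bigr)(\dd \fru),
\end{equation*}
with the convention $t_0 \coloneqq 0$. Measurability into the Borel space $\cM$ follows from that of $\projY$ and $u_n$ via the characterization of the Borel structure on $\cM$ recalled after (\ref{eq:actionspace}). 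The initial law of $\pi^{\mu,\bfu}_0 = H_{Y_0}[\mu]$ under $\p_\mu^\bfu$ equals $Q$ given in (\ref{eq:probabilityQ}), matching the initial law of $\bar\pi$ under $\bar\p_Q^\bfa$. Substituting $\bar Y_{\bar\tau_k} = \projY(\bar\pi_{\bar\tau_k})$ and the defining formula for $a_n$ into (\ref{eq:flow})--(\ref{eq:postjumplocationsPDP}) recovers term by term (\ref{eq:filterflow})--(\ref{eq:postjumplocations}), so the PDP laws agree. Identity (\ref{eq:functionalJequality}) then follows from the discrete-time representation (\ref{eq:functionalJdiscrete}): the integrand depends on $\bfu$ only through $u_n(\cdot + \tau_n, \dots) = a_n(\pi^{\mu,\bfu}_0, \dots, \tau_n, \pi^{\mu,\bfu}_{\tau_n})$, so transferring the expectation to $\bar\pi$ yields $\bar J(Q, \bfa)$; unfolding the mixture $\bar\p_Q^\bfa = \int \bar\p_\nu^\bfa \, Q(\dd\nu)$ produces $\sum_{a \in O} \mu(\pre(a)) \bar J(H_a[\mu], \bfa)$.

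\textbf{From $\bfa$ to $\bfu$.} Given $\bfa = (a_n) \in \cA_{ad}$, I construct $\bfu = (u_n) \in \cU_{ad}$ inductively on $n$, building along the way Borel maps $\rho_k = \rho_k(y_0, t_1, y_1, \dots, t_k, y_k)$ that reproduce the post-jump filter values. Set $\rho_0 \coloneqq H_{y_0}[\mu]$ and, having defined $u_0, \dots, u_{k-1}$ and $\rho_0, \dots, \rho_{k-1}$,
\begin{equation*}
\rho_k \coloneqq H_{y_k}\!\Bigl[ \phi_{\rho_{k-1}}^{u_{k-1}(\cdot + t_{k-1}, y_0, \dots, t_{k-1}, y_{k-1})}(t_k - t_{k-1}) \int_U \Lambda(\fru) \, u_{k-1}(t_k^-, y_0, \dots, t_{k-1}, y_{k-1})(\dd \fru) \Bigr],
\end{equation*}
\begin{equation*}
u_n(t, y_0, t_1, y_1, \dots, t_n, y_n)(\dd \fru) \coloneqq a_n\bigl(\rho_0, t_1, \rho_1, \dots, t_n, \rho_n\bigr)(t - t_n \, ; \dd \fru).
\end{equation*}
Predictability of $\bfu$ with respect to $(\cY_t)_{t \geq 0}$ (hence $\bfu \in \cU_{ad}$) follows from the representation (\ref{eq:controlsrepresentation}) and the characterization of $(\cY_t)$-predictable processes on the MPP filtration recalled after (\ref{eq:admissiblecontrols}). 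Applying the forward construction to this $\bfu$ yields a policy that coincides with $\bfa$ on every filter trajectory generated by $\bfu$, so the law equality and functional identity transfer.

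\textbf{Main obstacle.} I expect the delicate point to be the measurability bookkeeping in the inductive definition of $\rho_k$: one must secure joint Borel measurability of the flow map $(\rho, m, t) \mapsto \phi_\rho^m(t)$ on $\Delta_e \times \cM \times [0,+\infty)$, a consequence of the Lipschitz estimate (\ref{eq:FLipschitz}) together with the Borel structure on $\cM$ underlying the Young topology; joint measurability of $H_a$ in its vector argument; and a careful treatment of the left-limit evaluation $u_{k-1}(t_k^-, \cdot)$ so that $\rho_k$ is Borel in all its arguments. Once this is in place, plugging the recursively constructed filter into $\bfa$ produces a control function that is manifestly Borel-measurable, closing the argument.
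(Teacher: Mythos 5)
Your proposal is correct and follows essentially the same route as the paper: the forward map $a_n = u_n(\cdot + t_n, \projY(\cdot),\dots)$, the backward recursive reconstruction of the post-jump filter values via $H_{y_k}$ and the flow, law identification through the matching of initial law, sojourn-time and post-jump conditional distributions, and the transfer of the cost functional via the discrete-time representation with the mixture $\bar \p_Q^\bfa$. The measurability issue you flag as the main obstacle is exactly the point the paper settles by citing Yushkevich's Lemma 3, using that $\cM$ is a Borel space.
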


\begin{proof}
	Let us prove the first part of the Theorem. Let $\bfu \in \cU_{ad}$ be fixed and for all $n \in \N_0$ let us define the functions $a_n \colon \Delta_e \times \bigl((0, +\infty] \times \Delta_e\bigr)^n \to \cM$ as
	\begin{equation*}
		a_n(\nu_0, \dots, s_n, \nu_n)(t \, ; \dd \fru) = u_n\bigl(t + \, s_n, \projY(\nu_0), \dots, s_n, \projY(\nu_n) \, ; \dd \fru \bigr)
	\end{equation*}
	for all possible sequences $(\nu_i)_{i = 0}^n \subset \Delta_e$ and $(s_i)_{i = 1}^n \subset (0, +\infty]$.
	
	Thanks to the fact that $\projY$ is Borel-measurable and that $\cM$ is a Borel space, we can apply \citep[Lemma 3(i)]{yushkevich:jumpmodel} and it follows that each function $a_n$ is measurable. Therefore we have that $\bfa = (a_n)_{n \in \N_0} \in \cA_{ad}$.
	
	The laws of $\pi^{\mu, \bfu}$ under $\p_\mu^\bfu$ and $\bar \pi$ under $\bar \p_Q^\bfa$ are clearly determined by the \mbox{finite-dimensional} distributions of $(\pi_0^{\mu, \bfu}, \tau_1, \pi^{\mu,\bfu}_{\tau_1}, \dots)$ and $(\bar \pi_0, \bar \tau_1, \bar \pi_{\tau_1}, \dots)$ respectively and by the flows associated to the controlled vector fields $F^\bfu$ and $F^\bfa$. These laws, in turn, can be expressed via the initial distributions of $\pi^{\mu, \bfu}_0$ and $\bar \pi_0$ and the conditional distributions of the sojourn times and \mbox{post-jump} locations, \ie for $t \geq 0$ and $D \in \cB(\Delta_e)$ the quantities
	\begin{align}
		&\p_\mu^\bfu(\tau_n - \tau_{n-1} > t, \, \tau_{n-1} < +\infty \mid \pi^{\mu,\bfu}_0, \dots, \tau_{n-1}, \pi^{\mu,\bfu}_{\tau_{n-1}}); \label{eq:sojournlawmccontrol}\\
		&\bar\p_Q^\bfa(\bar\tau_n - \bar\tau_{n-1} > t, \, \bar\tau_{n-1} < +\infty \mid \bar\pi_0, \dots, \bar \tau_{n-1}, \bar{\pi}_{\bar\tau_{n-1}});  \label{eq:sojournlawpdpcontrol} \\
		&\p_\mu^\bfu(\pi^{\mu,\bfu}_{\tau_n} \in D, \, \tau_n < +\infty \mid \pi^{\mu,\bfu}_0, \dots, \pi^{\mu,\bfu}_{\tau_{n-1}}, \tau_n); \label{eq:jumplawmccontrol}\\
		&\bar\p_Q^\bfa(\bar\pi_{\bar\tau_n} \in D, \, \bar\tau_n < +\infty \mid \bar\pi_0, \dots, \bar{\pi}_{\bar\tau_{n-1}}, \bar \tau_n). \label{eq:jumplawpdpcontrol}
	\end{align}
	We will now prove that under the two different probability measures $\p_\mu^\bfu$ and $\bar \p_Q^\bfa$ the distributions (\ref{eq:sojournlawmccontrol})~-~(\ref{eq:jumplawpdpcontrol}) along with the initial laws of $\pi_0^{\mu,\bfu}$ and $\bar{\pi}_0$ are equal.
	
	\noindent \textbf{Initial distribution.} Fix $D \in \cB(\Delta_e)$. Then
	\begin{align*}
		\p_\mu^\bfu(\pi^{\mu,\bfu}_0 \in D) &= 
		\p_\mu^\bfu(H_{Y_0}[\mu] \in D) = 
		\sum_{a \in O} \p_\mu^\bfu(H_a[\mu] \in D, Y_0 = a) \\ 
		&= \sum_{a \in O} \p_\mu^\bfu(Y_0 = a) \delta_{H_a[\mu]}(D) = 
		\sum_{a \in O} \mu(\pre(a)) \delta_{H_a[\mu]}(D) = Q(D)
	\end{align*}
	since the event $\{H_a[\mu] \in D\}$ is either of probability zero or one with respect to $\p_\mu^\bfu$. On the other side
	\begin{align*}
		\bar \p_Q^\bfa(\bar \pi_0 \in D) &=
		\int_{\Delta_e} \bar \p_\nu^\bfa(\bar \pi_0 \in D) \, Q(\dd \nu) =
		\sum_{a \in O} \mu(\pre(a)) \bar \p_{H_a[\mu]}^\bfa(\bar \pi_0 \in D) \\
		&= \sum_{a \in O} \mu(\pre(a)) \delta_{H_a[\mu]}(D) = Q(D).
	\end{align*}
	
	\noindent \textbf{Sojourn times.} Let us analyze first the conditional law
	(\ref{eq:sojournlawmccontrol}). Notice that since we are considering (\ref{eq:sojournlawmccontrol}) on the set ${\tau_{n-1} < +\infty}$, $\pi^{\mu, \bfu}_{\tau_{n-1}}$ is well defined and the law of $\tau_n - \tau_{n-1}$ is not trivial. Fix $p_0, \dots, p_{n-1} \in \Delta_e$, where for each $i = 0, \dots, n-1$, $p_i \in \Delta_{b_i}$ for some $b_0 \ne b_1 \ne \dots \ne b_{n-1} \in O$; fix also $0 < s_1 < \dots < s_{n-1} < +\infty$ (for the same reason as above, we are allowed to take $s_{n-1}$ finite). 
	Since a trajectory of the observed process $Y$ uniquely determines a trajectory of the filtering process $\pi^{\mu,\bfu}$ and viceversa, we can immediately deduce that,
	up to $\p_\mu^\bfu$-null sets
	\begin{align*}
		\cY^{\mu, \bfu}_{\tau_{n-1}} &= \sigma(\pi^{\mu,\bfu}_0, \dots, \tau_{n-1}, \pi^{\mu,\bfu}_{\tau_{n-1}}) & &\text{and} &
		\cY^{\mu, \bfu}_{\tau_n^-} &= \sigma(\pi^{\mu,\bfu}_0, \dots, \pi^{\mu,\bfu}_{\tau_{n-1}}, \tau_n).
	\end{align*}
	From this fact and (\ref{eq:sojourntimes}) we can write for $t \geq 0$
	\begin{align*}
		&\p_\mu^\bfu(\tau_n - \tau_{n-1} > t, \, \tau_{n-1} < +\infty \mid \pi^{\mu,\bfu}_0 = p_0, \dots, \tau_{n-1} = s_{n-1}, \pi^{\mu,\bfu}_{\tau_{n-1}} = p_{n-1})
		\\
		= &\chi_{p_{n-1}}^{u_{n-1}}(t) = \exp\biggl\{ - \int_0^t \int_U r\bigl(\phi_{p_{n-1}}^{u_{n-1}}(s), \fru \bigr) \, u_{n-1}(s + s_{n-1}, \, b_0, \, \dots, \, s_{n-1}, \, b_{n-1} \, ; \dd \fru)\, \dd s \biggr\}.
	\end{align*}
	The function $u_{n-1}=u_{n-1}(\cdot + s_{n-1}, \, b_0, \, \dots, \, s_{n-1}, \, b_{n-1})$ can be clearly expressed as 
	\begin{equation*}
		u_{n-1}(\cdot + s_{n-1}, \, \projY(p_0), \, \dots, \, s_{n-1}, \, \projY(p_{n-1})).
	\end{equation*}
	Therefore, if we compare the previous computation with
	\begin{align*}
		&\bar\p_Q^\bfa(\bar\tau_n - \bar\tau_{n-1} > t, \, \bar\tau_{n-1} < +\infty \mid \bar\pi_0 = p_0, \dots, \bar\tau_{n-1} = s_{n-1}, \bar\pi_{\tau_{n-1}} = p_{n-1}) \\
		= &\chi_{p_{n-1}}^{a_{n-1}}(t)
		= \exp\biggl\{ - \int_0^t \int_U r\bigl(\phi_{p_{n-1}}^{a_{n-1}}(s), \fru \bigr) \, a_{n-1}(p_0, \, \dots, \, s_{n-1}, \, p_{n-1})(s \, ; \dd \fru) \, \dd s \biggr\}
	\end{align*}
	we get the desired result, by definition of $\bfa$.
	
	\noindent \textbf{Post-jump locations.} Continuing with the notation previously introduced (where we add only a new value $s_n$ such that $0 < s_1 < \dots < s_n < +\infty$), we can write (\ref{eq:jumplawmccontrol}) as
	\begin{align*}
	&\p_\mu^\bfu(\pi^{\mu,\bfu}_{\tau_n} \in D, \, \tau_n < +\infty \mid \pi^{\mu,\bfu}_0 = p_0, \dots, \pi^{\mu,\bfu}_{\tau_{n-1}} = p_{n-1}, \tau_n = s_n) \\
	= &\int_U R\bigl(\phi_{p_{n-1}}^{u_{n-1}}(s_n^- - s_{n-1}), \fru \, ; D\bigr) \, u_{n-1}(s_n^-, \, b_0, \, \dots, \, s_{n-1}, \, b_{n-1} \, ; \dd \fru).
	\end{align*}
	On the other hand, we know from (\ref{eq:sojourntimesPDP})
	\begin{align*}
	&\bar\p_Q^\bfa(\bar\pi_{\tau_n} \in D, \, \bar\tau_n < +\infty \mid \bar\pi_0 = p_0, \dots, \bar\pi_{\tau_{n-1}} = p_{n-1}, \bar\tau_n = s_n) \\
	= &\int_U R\bigl(\phi_{p_{n-1}}^{a_{n-1}}(s_n^- - s_{n-1}), \fru \, ; D\bigr) \, a_{n-1}(p_0, \, \dots, \, s_{n-1}, \, p_{n-1})(s_n^- - s_{n-1} \, ; \dd \fru).
	\end{align*}
	Hence again by definition of $\bfa$ we get the equality of the conditional laws (\ref{eq:jumplawmccontrol}) and \ref{eq:jumplawpdpcontrol}.
	
	It remains to prove (\ref{eq:functionalJequality}). Fix $\mu \in \Delta$ and $\bfu \in \cU_{ad}$ with corresponding $\bfa \in \cA_{ad}$ defined as above. Let us define the function $\Phi \colon \bar \Omega \to \R$ as
	\begin{align*}
		\Phi(\bar \omega) &= 
		\sum_{n = 0}^{+\infty} e^{-\beta \bar \tau_n(\bar \omega)} g\bigl(\bar \pi_{\bar \tau_n(\bar \omega)} (\bar \omega), a_n(\bar \pi_0(\bar \omega), \dots, \bar \tau_n(\bar \omega), \bar \pi_{\tau_n(\bar \omega)}(\bar \omega)\bigr)\\
		&= \sum_{n = 0}^{+\infty} e^{-\beta \bar \tau_n(\bar \omega)} g\bigl(\bar \pi_{\bar \tau_n(\bar \omega)} (\bar \omega), u_n(\cdot + \bar\tau_n(\bar \omega), \projY(\bar \pi_0(\bar \omega)), \dots, \bar \tau_n(\bar \omega), \projY(\bar \pi_{\tau_n(\bar \omega)}(\bar \omega))\bigr).
	\end{align*}
	Thanks to Assumptions \ref{assumption:lambda} and \ref{assumption:costfunction} this function is bounded. Since for each $n \in \N_0$ the functions $a_n$ (equivalently $u_n$) are measurable it is also $\bar \cF$-measurable.
	
	Now, take $\bar \omega = \pi^{\mu,\bfu}(\omega)$, $\omega \in \Omega$. It is clear that for all $t \geq 0$ we have $\bar \pi_t(\bar \omega) = \bar \omega(t) = \pi_t^{\mu, \bfu}(\omega)$ and also, by definition of the jump times $(\bar \tau_n)_{n \in \N_0}$, that $\bar \tau_n(\bar \omega) = \tau_n(\omega)$, \mbox{$\p_\mu^\bfu$-a.s.}. Then, we get that \mbox{$\p_\mu^\bfu$-a.s.}
	\begin{align*}
		\Phi(\pi^{\mu,\bfu}(\omega)) &= \sum_{n = 0}^{+\infty} e^{-\beta \tau_n(\omega)} g\bigl(\pi_{\tau_n(\omega)}^{\mu,\bfu} (\omega), u_n(\cdot + \tau_n(\omega), \dots, \tau_n(\omega), \projY(\pi_{\tau_n(\omega)}^{\mu,\bfu}(\omega))\bigr) \\
		&= \sum_{n = 0}^{+\infty} e^{-\beta \tau_n(\omega)} g\bigl(\pi_{\tau_n(\omega)}^{\mu,\bfu} (\omega), u_n(\cdot + \tau_n(\omega), \dots, \tau_n(\omega), Y_{\tau_n(\omega)}(\omega))\bigr)
	\end{align*}
	hence, comparing this result with (\ref{eq:functionalJdiscrete}) we obtain
	\begin{align*}
		J(\mu, \bfu) &= \int_\Omega \Phi(\pi^{\mu,\bfu}(\omega)) \p_\mu^\bfu(\dd \omega) =
		\int_{\bar\Omega} \Phi(\bar \omega) \bar \p_Q^\bfa(\dd \bar \omega) \\
		&= \sum_{a \in O} \mu(\pre(a)) \int_{\bar\Omega} \Phi(\bar \omega) \bar \p_{H_a[\mu]}^\bfa(\dd \bar \omega) = \sum_{a \in O} \mu(\pre(a)) \bar J(H_a[\mu], \bfa)
	\end{align*}
	by definition of the functional $\bar J$.
	
	To prove the second part of the theorem, fix $\mu \in \Delta$ and $\bfa = (a_n)_{n \in \N} \in \cA_{ad}$. Let us start by defining, for each possible sequence $b_0, b_1, \dots \in O$ and $s_1, \dots \in (0, +\infty]$ the following quantities by recursion for all $n \in \N$
	\begin{align*}
		p_0 &= p_0(b_0) = H_{b_0}[\mu] \\
		p_n &= p_n(b_0, s_1, \dots, s_n, b_n) \\
		&= 
		\begin{dcases}
			H_{b_n}\biggl[\phi_{p_{n-1}}^{a_{n-1}(\cdot)}(s_n^- - s_{n-1}) \int_U \Lambda(\fru) \, a_{n-1}(\cdot)(s_n^- - s_{n-1} \, ; \dd \fru)\biggr], &\text{if } s_1 < \dots < s_n \\
			\rho, &\text{otherwise.}
		\end{dcases}	
	\end{align*}
	Here $a_n(\cdot) = a_{n-1}(p_0, \dots, s_n, p_n)$, $s_0 = 0$ and $\rho \in \Delta_e$ is an arbitrarily chosen value.
	
	For all $n \in \N_0$ we define the functions $u_n \colon [0, +\infty) \times O \times \bigl((0, +\infty] \times O\bigr)^n \to \cP(U)$ as
	\begin{equation*}
	u_n(t, b_0, \dots, s_n, b_n \, ; \dd \fru) = 
	\begin{cases}
		a_n(p_0, \dots, s_n, p_n)(t - s_n \, ; \dd \fru), 	&\text{if } t \geq s_n \\
		\fru,													&\text{if } t < s_n
	\end{cases}
	\end{equation*}
	where $\fru \in U$ is some fixed value that is irrelevant to specify.
	Thanks to the fact that each of the functions $(b_0, \dots, s_n, b_n) \mapsto p_n$ is Borel-measurable and that $\cM$ is a Borel space, we can use \citep[Lemma 3(ii)]{yushkevich:jumpmodel} to conclude that all the $u_n$'s are Borel-measurable and therefore $\bfu = (u_n)_{n \in \N_0} \in \cU_{ad}$.
	
	Similarly to what we did in the proof of the first part of the Theorem, we need to characterize the laws of $\pi^{\mu, \bfu}$ under $\p_\mu^\bfu$ and $\bar \pi$ under $\p_Q^\bfa$. First of all, let us notice that we do not need to prove again that the initial distributions of the two processes are equal since they do not depend on the controls $\bfu$ and $\bfa$. Therefore, we need only to compare the conditional distributions	
	\begin{align*}
	&\p_\mu^\bfu(\tau_n - \tau_{n-1} > t, \, \tau_{n-1} < +\infty \mid Y_0, \dots, \tau_{n-1}, Y_{\tau_{n-1}});\\
	&\bar\p_Q^\bfa(\bar\tau_n - \bar\tau_{n-1} > t, \, \bar\tau_{n-1} < +\infty \mid \bar Y_0, \dots, \bar \tau_{n-1}, \bar{Y}_{\bar\tau_{n-1}}); \\
	&\p_\mu^\bfu(\pi^{\mu,\bfu}_{\tau_n} \in D, \, \tau_n < +\infty \mid Y_0, \dots, Y_{\tau_{n-1}}, \tau_n);\\
	&\bar\p_Q^\bfa(\bar\pi_{\bar\tau_n} \in D, \, \bar\tau_n < +\infty \mid \bar Y_0, \dots, \bar{Y}_{\bar\tau_{n-1}}, \bar \tau_n),
	\end{align*}
	where $t > 0$ and $D \in \cB(\Delta_e)$.
	This can be done in the same way as in the first part of the proof, this time using the definition of the control $\bfu \in \cU_{ad}$ and the obvious fact that, up to $\bar \p_Q^\bfa$-null sets we have
	\begin{align*}
		\bar \cF_{\bar \tau_{n-1}} &= \sigma(\bar Y_0, \dots, \bar \tau_{n-1}, \bar Y_{\bar \tau_{n-1}}) & &\text{and} &
		\bar \cF_{\tau_n^-} &= \sigma(\bar Y_0, \dots, \bar Y_{\bar \tau_{n-1}}, \bar \tau_n).
	\end{align*}
	
	Finally, to prove (\ref{eq:functionalJequality}) it suffices to define $\Phi \colon \bar \Omega \to \R$ as
	\begin{equation*}
		\Phi(\bar \omega) = 
		\sum_{n = 0}^{+\infty} e^{-\beta \bar \tau_n(\bar \omega)} g\bigl(\bar \pi_{\bar \tau_n(\bar \omega)} (\bar \omega), u_n(\cdot + \bar\tau_n, \bar Y_0(\bar \omega), \dots, \bar \tau_n(\bar \omega), \bar Y_{\tau_n(\bar \omega)}(\bar \omega)\bigr).
	\end{equation*}
	and notice that $p_n(\bar Y_0, \dots, \bar\tau_n, \bar Y_{\bar\tau_n}) = \bar \pi_{\bar \tau_n}$, so that we can write 
	\begin{equation*}
		\Phi(\bar \omega) = 
		\sum_{n = 0}^{+\infty} e^{-\beta \bar \tau_n(\bar \omega)} g\bigl(\bar \pi_{\bar \tau_n(\bar \omega)} (\bar \omega), a_n(\bar \pi_0(\bar \omega), \dots, \bar \tau_n(\bar \omega), \bar \pi_{\tau_n(\bar \omega)}(\bar \omega)\bigr).
	\end{equation*}
	The desired equality follows from the same reasoning as in the first part of the proof.
\end{proof}

\begin{rem}\label{rem:stationarypolicies}
The proof of Theorem \ref{th:costfunctionalidentif} provides us with an explicit way to construct an admissible policy $\bfa$ given an admissible control $\bfu$ and viceversa. The case that most concerns us is to build an admissible control $\bfu$ when $\bfa$ is a stationary admissible policy, \ie $\bfa = (a_0, a, a, \dots)$. The function $a_0$ depends on the starting point of the filtering process and $a$ is a function of its jump times and jump locations. In other words, this kind of admissible policy represents a piecewise open-loop control. Notice that here dependency on jump times (and not only on the time elapsed since the last one) must be taken into account. This is a generalization of the original definition by Vermes (cfr. \citep{vermes:optcontrol}).
\end{rem}

Having identified the original problem with the discrete-time PDP problem, we can concentrate our analysis on the latter one. What we are aiming at is to prove that $v$ is the unique fixed point of the operator $\cT \colon \dB_b(\Delta_e) \to \dB_b(\Delta_e)$ defined for all $\nu \in \Delta_e$ as
\begin{multline} \label{eq:operatorT}
	\begin{split}
	\cT w(\nu) 
	&\coloneqq \inf_{m \in \cM} \int_0^\infty \int_U e^{-\beta t} L(\phi_\nu^{m}(t), \chi_\nu^{m}(t), \fru, w) \, m(t; \dd \fru) \, \dd t \\
	&\coloneqq \inf_{m \in \cM} \int_0^\infty \int_U e^{-\beta t} \chi_\nu^{m}(t) \biggl[\phi_\nu^{m}(t) \bff(\fru) +
	\end{split} \\
	r(\phi_\nu^{m}(t), \fru)
	\int_{\Delta_e} w(p) R(\phi_\nu^{m}(t), \fru; \dd p) \biggr] \, m(t; \dd \fru) \, \dd t.
\end{multline}
It is easy to check that under Assumptions \ref{assumption:lambda} and \ref{assumption:costfunction} $\cT$ is a contraction. Therefore, we just need to show that $v$ is a fixed point of $\cT$. To do so, we will resort to results connected with the so called \emph{lower semicontinuous model} of \citep{bertsekas:stochoptcontrol}, that ensure the existence of an optimal non-randomized stationary (Borel-)measurable policy $\bfa \in \cA_{ad}$, in the same sense given in Remark \ref{rem:stationarypolicies} above.

These kind of model require to specify the quadruple $(S, \cM, q, g)$, where $S$ is the state space and $q$ is a transition kernel for the underlying discrete time process. $\cM$ and $g$ are the action space and discrete-time one-stage cost function of (\ref{eq:actionspace}) and (\ref{eq:costfunctiondiscrete}).
Attention must be paid to the fact that the state space of this model does not coincide with the state space $\Delta_e$ of the PDP, since all functions but the first one composing a policy depend also on jump times, as stressed in Remark \ref{rem:stationarypolicies}. Apart from this minor complication, verification of assumptions of the lower semicontinuous model are quite standard. In our setting it all boils down to the next Lemma, whose proof is omitted since it is based on routine computations.
\begin{lemma}\label{prop:fixedpointT}
	Under Assumptions \ref{assumption:lambda} and \ref{assumption:costfunction} we have the following results.
	\begin{enumerate}
		\item The transition kernel $q$ defined for all $\nu \in \Delta_e$, $D \in \cB(\Delta_e)$ and $m \in \cM$ by
		\begin{equation}
		q(D \mid \nu, m) \coloneqq \int_0^{+\infty} e^{-\beta t} \chi_\nu^{m}(t) \int_U r(\phi_\nu^{m}(t), u) R(\phi_\nu^{m}(t), u; D) \, m(t; \dd u) \, \dd t
		\end{equation}
		is continuous.
		\item The cost function $g$ defined in (\ref{eq:costfunctiondiscrete}) is bounded and continuous.
	\end{enumerate}  
\end{lemma}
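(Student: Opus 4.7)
The plan is to handle boundedness and continuity of $g$ first, as these are fairly direct, and then tackle the continuity of the transition kernel $q$, which is the subtle point; the key technical input will be the weak Feller-type Proposition \ref{prop:weakFellerR} already established.

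For assertion (2), boundedness of $g$ is immediate: since $\phi_\nu^m(t) \in \Delta_e$ is a probability vector, $\chi_\nu^m(t) \in [0,1]$, and $|\bff(\fru)| \leq C_f$ by Assumption \ref{assumption:costfunction}, one gets $|g(\nu, m)| \leq C_f/\beta$. For continuity in $(\nu, m) \in \Delta_e \times \cM$ with $\cM$ endowed with the Young topology, I would first check that for each fixed $t \geq 0$ the maps $(\nu, m) \mapsto \phi_\nu^m(t)$ and $(\nu, m) \mapsto \chi_\nu^m(t)$ are jointly continuous: continuity in $\nu$ uniformly in $m$ comes from the Lipschitz estimate (\ref{eq:FLipschitz}) combined with Gronwall's inequality, while continuity in $m$ for fixed $\nu$ is standard for ODE flows driven by relaxed controls against Young convergence. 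Then the integrand $e^{-\beta t} \chi_\nu^m(t) \phi_\nu^m(t) \int_U \bff(\fru)\, m(t;\dd \fru)$ is dominated by $C_f e^{-\beta t}$, and since $\bff$ is continuous the inner integrand is Caratheodory, so Young convergence of $m_n \to m$ combined with dominated convergence yields joint continuity of $g$.

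For assertion (1), the goal is to show that for every $w \in \dC_b(\Delta_e)$ the map
\begin{equation*}
(\nu, m) \mapsto \int_0^{+\infty} e^{-\beta t} \chi_\nu^m(t) \int_U r(\phi_\nu^m(t), \fru) \int_{\Delta_e} w(p) R(\phi_\nu^m(t), \fru; \dd p)\, m(t; \dd \fru)\, \dd t
\end{equation*}
is continuous; this is the notion of continuity of a transition kernel required by the lower semicontinuous model of \citep{bertsekas:stochoptcontrol}. The crucial point is that $R$ itself is not Feller, due to the discontinuities of $H_a$ in (\ref{eq:operatorH}) at the points where $\mu\ind_{\pre(a)}$ vanishes. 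This is precisely why Proposition \ref{prop:weakFellerR} was proved earlier: the product $\rho \mapsto r(\rho, \fru) \int_{\Delta_e} w(p) R(\rho, \fru; \dd p)$ is continuous on $\Delta_e$ uniformly in $\fru \in U$, the $r$ factor killing the singularity of $H_a$. Composing this with the jointly continuous flow $\phi_\nu^m(t)$ yields an integrand in $(t, \fru)$ that is Caratheodory and depends continuously on $(\nu, m)$, uniformly bounded by $C_r \|w\|_\infty e^{-\beta t}$ by (\ref{eq:rbounded}). Young convergence combined with dominated convergence then closes the argument exactly as in part (2).

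The main obstacle is thus conceptual rather than computational: without Proposition \ref{prop:weakFellerR}, continuity of $q$ would genuinely fail because of the jump map $H_a$. Once that proposition is in hand, together with the Lipschitz bounds (\ref{eq:FLipschitz}), (\ref{eq:rLipschitz}) on $F$ and $r$, the uniform bound (\ref{eq:rbounded}), and the standard fact that flows of ODEs depend continuously on both initial data and the driving relaxed control under Young convergence, the remainder of the verification reduces to routine dominated convergence estimates, which is why I would be comfortable omitting the full computation.
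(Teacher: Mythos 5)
The paper itself omits the proof of this lemma, stating only that it rests on routine computations, so there is no line-by-line argument to compare against; your proposal follows exactly the route the paper signals, namely using Proposition \ref{prop:weakFellerR} to neutralize the discontinuity of the jump maps $H_a$ and then closing with Gronwall estimates, Young convergence and dominated convergence. One point you assert but do not justify, and which Proposition \ref{prop:weakFellerR} does \emph{not} supply, is that the integrand is Carath\'eodory, i.e.\ that for fixed $\rho$ the map $u \mapsto r(\rho,u)\int_{\Delta_e} w(p)\,R(\rho,u;\dd p)$ is continuous in $u$: the proposition gives continuity in $\rho$ uniformly in $u$, which is a different statement, and the paper's remark following Proposition \ref{prop:optpolicy} explicitly singles out this $u$-continuity as the crucial consequence of Assumption \ref{assumption:lambda} needed for continuity of $q$ with respect to $m$ in the Young topology. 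The verification is genuinely of the same flavour as the proof of Proposition \ref{prop:weakFellerR} (writing $r(\rho,u)\int w\,\dd R = \sum_{b \ne a} w(H_b[\rho\Lambda(u)])\,\rho\Lambda(u)\ind_{\pre(b)}$ and noting that the prefactor vanishes exactly where $H_b$ is discontinuous), so the gap is fillable, but you should state and check it rather than fold it silently into the word ``Carath\'eodory.''
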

Since the hypotheses of the lower semicontinuous model of \citep{bertsekas:stochoptcontrol} are verified, we are able to state (details on the proof can be found in \eg \citep[Corollary 9.17.2]{bertsekas:stochoptcontrol}) the following standard result on the existence of an optimal policy and regularity of the value function.
\begin{proposition} \label{prop:optpolicy}
	Under Assumptions \ref{assumption:lambda} and \ref{assumption:costfunction} there exists an optimal policy $\bfa^\star \in \cA_{ad}$, \ie a policy such that 
	$$v(\nu) = \bar J(\nu, \bfa^\star), \quad \text{for all } \nu \in \Delta_e.$$
	Moreover, this policy is stationary, the value function $v$ is lower semicontinuous and it is the unique fixed point of the operator $\cT$.
\end{proposition}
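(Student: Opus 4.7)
The plan is to verify the hypotheses of the lower semicontinuous discounted cost model of \citep{bertsekas:stochoptcontrol} for the quadruple $(S,\cM,q,g)$, and then invoke their Corollary 9.17.2 directly, combining its conclusion with a separate contraction argument to upgrade the fixed-point statement from existence to uniqueness.

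First I would fix the state space $S$ for the embedded discrete-time problem. Because an admissible policy $\bfa=(a_n)\in\cA_{ad}$ feeds $a_n$ not only the past jump locations but also the past jump times, the correct state at stage $n\geq 1$ is the pair $(t,\rho)\in(0,+\infty]\times\Delta_e$ (jump time of $\bar\pi$ together with the post-jump location), while the stage $0$ state is simply $\rho\in\Delta_e$. The controlled transition kernel from one stage to the next is obtained in the standard way from the PDP characteristics: on the event $\{\bar\tau_{n+1}-\bar\tau_n\in\dd t\}$ one multiplies the density $-\frac{\dd}{\dd t}\chi_\nu^{m}(t)$ by the post-jump distribution $R(\phi_\nu^m(t),\cdot;\dd\rho')$, integrated against $m(t;\dd\fru)$, and further discounted by $e^{-\beta t}$ to obtain the kernel $q$ of Lemma \ref{prop:fixedpointT}. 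Once $S$, $q$ and the cost $g$ are identified, the action space $\cM$ is compact in the Young topology as recalled after \eqref{eq:actionspace}, so every policy selects a point from a compact Borel space at each stage.

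Next I would check the three analytic hypotheses of the Bertsekas--Shreve lower semicontinuous model: (i) $\cM$ is a compact Borel space; (ii) $g$ is bounded and lower semicontinuous in $(\nu,m)$; (iii) $q(\cdot\mid\nu,m)$ is continuous in $(\nu,m)$ in the topology of weak convergence. Items (ii) and (iii) are precisely Lemma \ref{prop:fixedpointT}; the boundedness in (ii) is automatic from Assumption \ref{assumption:costfunction} and the bound \eqref{eq:rbounded}. Together with the compactness in (i), this puts the discrete problem in the scope of \citep[Cor.~9.17.2]{bertsekas:stochoptcontrol}, which yields simultaneously: the value function $v$ is lower semicontinuous on $\Delta_e$, $v$ satisfies the Bellman equation $v=\cT v$, and there exists a non-randomized stationary measurable selector $\bfa^\star=(a_0^\star,a^\star,a^\star,\dots)$ attaining the infimum in \eqref{eq:operatorT} for every $\nu\in\Delta_e$; by Remark \ref{rem:stationarypolicies} such a selector belongs to $\cA_{ad}$.

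To finish I would treat uniqueness separately. For any $w_1,w_2\in\dB_b(\Delta_e)$, the boundedness of $r$ and $R$ together with the factor $e^{-\beta t}\chi_\nu^m(t)$ and the inequality $\chi_\nu^m(t)\int_U r(\phi_\nu^m(t),\fru)\,m(t;\dd\fru)=-\tfrac{\dd}{\dd t}\chi_\nu^m(t)$ give
\[
\|\cT w_1-\cT w_2\|_\infty\leq \sup_{m\in\cM,\nu\in\Delta_e}\int_0^\infty e^{-\beta t}\bigl(-\tfrac{\dd}{\dd t}\chi_\nu^m(t)\bigr)\,\dd t\;\|w_1-w_2\|_\infty,
\]
and a one-line integration by parts shows the supremum is bounded by $\frac{C_r}{\beta+C_r}<1$, so $\cT$ is a strict contraction on $(\dB_b(\Delta_e),\|\cdot\|_\infty)$. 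The Banach fixed-point theorem then forces the fixed point produced by the Bertsekas--Shreve step to be unique, and the optimality $v(\nu)=\bar J(\nu,\bfa^\star)$ follows from the selector attaining the minimum in every Bellman iterate.

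The main obstacle I anticipate is bookkeeping: making the state space $S=\{0\}\cup((0,+\infty]\times\Delta_e)$ and the non-stationary stage-0 cost fit the stationary framework of \citep{bertsekas:stochoptcontrol} without any hidden measurability loss, and in particular translating the selector produced in that framework back into an element of $\cA_{ad}$ in the sense of Remark \ref{rem:stationarypolicies} rather than a classical piecewise open-loop policy depending only on the time elapsed since the last jump. Once this identification is made carefully, both the existence of the optimal stationary policy and the fixed-point characterization follow routinely.
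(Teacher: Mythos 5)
Your proposal follows essentially the same route as the paper: identify the quadruple $(S,\cM,q,g)$ of the lower semicontinuous model (with the caveat, which the paper also makes, that the state space must carry the jump times and not just $\Delta_e$), verify its hypotheses via Lemma \ref{prop:fixedpointT} and the compactness of $\cM$ in the Young topology, invoke \citep[Corollary 9.17.2]{bertsekas:stochoptcontrol} for existence of a stationary optimal policy, lower semicontinuity of $v$ and the identity $v=\cT v$, and obtain uniqueness of the fixed point from the contraction property of $\cT$ (which the paper asserts just before the proposition and which your explicit bound $C_r/(\beta+C_r)<1$ correctly quantifies). This matches the paper's (largely cited) proof, with your version supplying somewhat more detail on the bookkeeping.
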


\begin{rem}
	It is worth mentioning that Assumption \ref{assumption:lambda} reveals its fundamental role in the course of the proof of Proposition \ref{prop:optpolicy}. In fact, it ensures the continuity of the function $u \mapsto r(\rho, u) \int_{\Delta_e} w(p) R(\rho, u; \dd p)$ for all $\rho \in \Delta_e$ and all $w \in \dC_b(\Delta_e)$, which is crucial to guarantee continuity of $q$ with respect to $m \in \cM$. However all the other results shown so far remain true even if we weaken Assumption \ref{assumption:lambda} and just ask that the maps $u \mapsto \lambda_{i j}(u)$ are measurable for all $i,j \in I$ and that $\sup_{u \in U} \lambda_i(u) < +\infty$ for all $i \in I$.
\end{rem}

Relaxed controls are difficult to interpret and implement in practice. Fortunately, we are able to show from the Proposition \ref{prop:optpolicy} that $v$ is also the unique fixed point of the operator $\cG \colon \dB_b(\Delta_e) \to \dB_b(\Delta_e)$ given by
\begin{equation}\label{eq:operatorG}
\cG w(\nu) = \inf_{\alpha \in A} \int_0^\infty e^{-\beta t} L(\phi_\nu^{\alpha}(t), \chi_\nu^{\alpha}(t), \alpha(t), w) \, \dd t, \quad \nu \in \Delta_e,
\end{equation}
where the infimum is taken among all possible ordinary control instead of relaxed ones.
Thanks again to Assumptions \ref{assumption:lambda} and \ref{assumption:costfunction}, standard arguments show that $\cG$ is a contraction.
\begin{theorem}
	Under Assumptions \ref{assumption:lambda} and \ref{assumption:costfunction} $v$ is the unique fixed point of the operator $\cG$.
\end{theorem}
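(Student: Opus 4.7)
The plan is to leverage Proposition \ref{prop:optpolicy}, which already gives $v$ as the unique fixed point of $\cT$, and to show that the infimum defining $\cG$ actually coincides with the infimum defining $\cT$ when applied to $v$. Since $\cG$ is a contraction on $\dB_b(\Delta_e)$, uniqueness of the fixed point is automatic, so the entire task reduces to proving the identity $\cG v = v$.

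First I would record the trivial inequality $\cT v(\nu) \leq \cG v(\nu)$ for every $\nu \in \Delta_e$. Indeed, each ordinary control $\alpha \in A$ can be embedded into $\cM$ via $t \mapsto \delta_{\alpha(t)}$, so the infimum over the smaller class $A$ dominates the infimum over the larger class $\cM$. Combined with $v = \cT v$, this yields $v(\nu) \leq \cG v(\nu)$.

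For the reverse inequality, I would use a standard chattering-type density result: ordinary controls, viewed as Dirac-valued elements of $\cM$, are dense in $\cM$ with respect to the Young topology (see \eg \citep{davis:markovmodels}). I would fix $\epsilon > 0$ and an $\epsilon$-optimal relaxed control $m_\epsilon \in \cM$ for $\cT v(\nu)$, so that
\begin{equation*}
\cT v(\nu) + \epsilon \geq \int_0^\infty \int_U e^{-\beta t} L(\phi_\nu^{m_\epsilon}(t), \chi_\nu^{m_\epsilon}(t), \fru, v) \, m_\epsilon(t; \dd \fru) \, \dd t.
\end{equation*}
Then I would pick a sequence $(\alpha_n) \subset A$ with $\delta_{\alpha_n(\cdot)} \to m_\epsilon$ in the Young topology and argue that the associated quantities $\phi_\nu^{\alpha_n}(t)$ and $\chi_\nu^{\alpha_n}(t)$ converge uniformly on compact time intervals to $\phi_\nu^{m_\epsilon}(t)$, $\chi_\nu^{m_\epsilon}(t)$. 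This uses the Lipschitz/boundedness estimates \eqref{eq:FLipschitz} and \eqref{eq:rLipschitz}--\eqref{eq:rbounded} together with continuity of $\bff$ (Assumption \ref{assumption:costfunction}), continuity of $r$, and Proposition \ref{prop:weakFellerR} to control the term $r(\cdot,\fru)\int v \, \dd R(\cdot,\fru;\cdot)$. Since $v$ is only lower semicontinuous (from Proposition \ref{prop:optpolicy}), I would need to be slightly careful; a convenient workaround is to approximate $v$ from below by bounded continuous functions and pass to the limit, or to invoke the fact that the integrand $L(\phi_\nu^{m}(t), \chi_\nu^{m}(t), \fru, v)$ is $m$-a.e. bounded by a constant times $e^{-\beta t/2}$ (dominated convergence in the Young topology).

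Putting these pieces together, $\cG v(\nu) \leq \int_0^\infty e^{-\beta t} L(\phi_\nu^{\alpha_n}(t), \chi_\nu^{\alpha_n}(t), \alpha_n(t), v) \, \dd t \to \int_0^\infty \int_U e^{-\beta t} L(\phi_\nu^{m_\epsilon}(t), \chi_\nu^{m_\epsilon}(t), \fru, v) \, m_\epsilon(t; \dd \fru) \, \dd t \leq \cT v(\nu) + \epsilon$, and letting $\epsilon \downarrow 0$ gives $\cG v(\nu) \leq \cT v(\nu) = v(\nu)$. Thus $\cG v = v$, and since $\cG$ is a contraction on $\dB_b(\Delta_e)$, Banach's fixed point theorem identifies $v$ as the unique fixed point. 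The main obstacle I anticipate is the lower semicontinuity of $v$ (rather than continuity): one must handle the Young-topology limit in the integrand $r(\phi_\nu^{\alpha_n}(t), \alpha_n(t))\int v\,\dd R$ with some care, which is precisely where Proposition \ref{prop:weakFellerR} and a monotone/dominated approximation of $v$ from below come into play.
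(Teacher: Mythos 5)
Your argument is essentially the paper's own proof: the easy inequality $v=\cT v\le \cG v$ from embedding $A$ into $\cM$, followed by density of ordinary controls in $\cM$ for the Young topology and continuity of $m\mapsto\cJ(\nu,m)$ to pass to the limit; the only (inessential) difference is that you work with an $\epsilon$-optimal relaxed control, whereas the paper invokes \citep[Corollary 9.12.1]{bertsekas:stochoptcontrol} to get an exact minimizer $m^\star\in\cM$ of the expression defining $\cT v(\nu)$ and sends $\alpha_n\to m^\star$. The semicontinuity issue you flag is real --- at this stage $v$ is only known to be lower semicontinuous, and the Young-topology passage to the limit in the term $r(\cdot,\fru)\int_{\Delta_e} v\,\dd R$ needs an upper, not lower, estimate --- but the paper's proof glosses over exactly the same point by asserting continuity of $\cJ(\nu,\cdot)$ ``as in Lemma \ref{prop:fixedpointT}'', so you are not worse off than the original on this count.
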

\begin{proof}
	It is clear that $v = \cT v \leq \cG v$, so we just need to prove the reverse inequality. We previously saw that there exists a stationary optimal policy $\bfa^\star$ for the discrete time control problem. By \citep[Corollary 9.12.1]{bertsekas:stochoptcontrol} this implies that the infimum in (\ref{eq:operatorT}) is attained for each $\nu \in \Delta_e$ by some $m^\star \in \cM$, with $m^\star = m^\star(\nu)$, and since the set $A$ of ordinary controls is dense in $\cM$ with respect to the Young topology (see \eg \citep[V, Th. 7]{sainte-beuve:relaxeddensity}), we can construct a sequence $(\alpha_n)_{n \in \N} \subset A$ such that $\alpha_n \to m^\star$ as $n \to \infty$. Moreover we have that the function $\cJ(\nu, m) \coloneqq \int_0^\infty \int_U e^{-\beta t} L(\phi_\nu^{m}(t), \chi_\nu^{m}(t), u, v) \, m(t; \dd u) \, \dd t$ is continuous in $m$ for all $\nu \in \Delta_e$ (the computations are similar to those of proposition (\ref{prop:fixedpointT})). Hence we get that for each fixed $\nu \in \Delta_e$
	\begin{equation*}
	\cJ(\nu, \alpha_n) \to \cJ(\nu, m^\star) = \cT v(\nu) = v(\nu).
	\end{equation*}
	Noticing that $\cG v(\nu) \leq \cJ(\nu, \alpha_n)$ for all $n \in \N$, we get the result.
\end{proof}

We can finally provide the link between the two value functions $V$ and $v$.
\begin{theorem} \label{th:valuefunctionsidentif}
	For all $\mu \in \Delta$ we have that
	\begin{equation}
	V(\mu) = \sum_{a \in O} \mu(\pre(a))v(H_a[\mu]).
	\end{equation}
\end{theorem}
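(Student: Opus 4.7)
The plan is to exploit the correspondence between admissible controls and admissible policies established in Theorem \ref{th:costfunctionalidentif}, together with the identity
\[
J(\mu, \bfu) = \sum_{a \in O} \mu(\pre(a)) \bar J(H_a[\mu], \bfa)
\]
that pairs matched controls and policies. Taking infima on both sides of this relation in each direction will yield the two inequalities needed.

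First, I would prove $V(\mu) \geq \sum_{a \in O} \mu(\pre(a)) v(H_a[\mu])$. Fix any $\bfu \in \cU_{ad}$. By the first part of Theorem \ref{th:costfunctionalidentif}, there is an associated policy $\bfa \in \cA_{ad}$ with $J(\mu,\bfu) = \sum_{a \in O} \mu(\pre(a)) \bar J(H_a[\mu], \bfa)$. Since $\bar J(H_a[\mu], \bfa) \geq v(H_a[\mu])$ for every $a \in O$ by the very definition of $v$, and the weights $\mu(\pre(a)) \geq 0$, summing and taking infimum over $\bfu \in \cU_{ad}$ gives the first inequality.

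For the reverse inequality, I would use Proposition \ref{prop:optpolicy}, which yields a single stationary optimal policy $\bfa^\star \in \cA_{ad}$ with $v(\nu) = \bar J(\nu, \bfa^\star)$ for every $\nu \in \Delta_e$ --- in particular for each of the finitely many points $\nu = H_a[\mu]$, $a \in O$. Applying the second part of Theorem \ref{th:costfunctionalidentif} to $\bfa^\star$ produces an admissible control $\bfu^\star \in \cU_{ad}$ satisfying
\[
J(\mu, \bfu^\star) = \sum_{a \in O} \mu(\pre(a)) \bar J(H_a[\mu], \bfa^\star) = \sum_{a \in O} \mu(\pre(a)) v(H_a[\mu]).
\]
Since $V(\mu) \leq J(\mu, \bfu^\star)$, the reverse inequality follows, completing the proof.

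No serious obstacle arises here, as the heavy lifting has already been done. The only point that deserves care is that the reverse inequality genuinely relies on having a policy optimal simultaneously at all the finitely many starting points $H_a[\mu]$. If one preferred to avoid invoking Proposition \ref{prop:optpolicy}, the same conclusion could be reached by an $\epsilon$-argument: for every $\epsilon > 0$ and each $a \in O$, choose $\bfa^{(a)} \in \cA_{ad}$ with $\bar J(H_a[\mu], \bfa^{(a)}) \leq v(H_a[\mu]) + \epsilon$, then glue these into one policy $\bfa \in \cA_{ad}$ by letting its components on initial point $\nu \in \Delta_e$ coincide with those of $\bfa^{(a)}$ when $\nu = H_a[\mu]$ (this is legitimate precisely because admissible policies in $\cA_{ad}$ are allowed to depend on the initial $\Delta_e$-valued point, and the finitely many $H_a[\mu]$ are distinct Borel points). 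Converting $\bfa$ back via Theorem \ref{th:costfunctionalidentif} and letting $\epsilon \downarrow 0$ yields the same bound.
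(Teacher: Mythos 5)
Your proposal is correct and follows essentially the same route as the paper: the lower bound comes from $\bar J(H_a[\mu],\bfa) \geq v(H_a[\mu])$ applied to the policy matched to an arbitrary $\bfu$, and the upper bound from converting the stationary optimal policy $\bfa^\star$ of Proposition \ref{prop:optpolicy} into a control $\bfu^\star$ via Theorem \ref{th:costfunctionalidentif}. The extra $\epsilon$-argument you sketch as an alternative to invoking Proposition \ref{prop:optpolicy} is a sound observation but not needed.
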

\begin{proof}
	Recall that we know from Theorem \ref{th:costfunctionalidentif} that for all $\mu \in \Delta$
	\begin{equation*}
	J(\mu, \bfu) = \sum_{a \in O} \mu(\pre(a)) \bar J(H_a[\mu], \bfa),
	\end{equation*}
	where $\bfu \in \cU_{ad}$ and $\bfa \in \cA_{ad}$ are corresponding admissible controls and admissible policies.
	
	Let now $\mu \in \Delta$ be fixed. It is obvious that $V(\mu) \geq \sum_{a \in O} \mu(\pre(a))v(H_a[\mu])$. In fact, since $\bar J(H_a[\mu], \bfa) \geq v(H_a[\mu])$ for all $\bfa \in \cA_{ad}$ and all $a \in O$, we get that for all $\bfu \in \cU_{ad}$
	\begin{equation*}
	J(\mu, \bfu) \geq \sum_{a \in O} \mu(\pre(a))v(H_a[\mu]),
	\end{equation*}
	and we get the desired inequality by taking the infimum on the left hand side with respect to all $\bfu \in \cU_{ad}$.
	
	The reverse inequality is easily obtained by taking an optimal policy $\bfa^\star \in \cA_{ad}$ (whose existence is guaranteed by Proposition \ref{prop:optpolicy}) and considering its corresponding admissible control $\bfu^\star \in \cU_{ad}$. From Theorem \ref{th:costfunctionalidentif} we immediately get that
	\begin{equation*}
		V(\mu) \leq J(\mu, \bfu^\star) = \sum_{a \in O} \mu(\pre(a)) \bar J(H_a[\mu], \bfa^\star) = \sum_{a \in O} \mu(\pre(a))v(H_a[\mu]). \qedhere
	\end{equation*}
\end{proof}

Theorem \ref{th:valuefunctionsidentif} gives us a way to go back and forth between the original control problem and the discrete-time one. Moreover, we easily deduce that an admissible control $\bfu \in \cU_{ad}$ is optimal if and only if its corresponding admissible policy $\bfa \in \cA_{ad}$ is. In the next Section we will focus our attention on the analysis of the value function $v$, that will indirectly give informations about the original value function $V$.

\section{Characterization of the value function}\label{sec:valuefunctioncharacterizations}
We will characterize the PDP value function $v$ in two ways: first we will study a fixed point problem related to the operator $\cG$. We already know that $v$ is the unique fixed point of $\cG$ as an operator acting on the space of bounded Borel-measurable functions on $\Delta_e$ into itself. What we will prove is that it is the unique fixed point of $\cG$ as an operator acting on the space of continuous functions into itself. Once gained the continuity of $v$ on $\Delta_e$, hence its uniform continuity and boundedness, we will prove that it is also a \emph{constrained viscosity solution} of a HJB equation.

\subsection{The fixed point problem}
Let us denote by $\dC(\Delta_e)$ the space of continuous functions on $\Delta_e$ equipped with the usual sup norm. We recall that, since $\Delta_e$ is a compact subset of $\R^{\abs{I}}$, this is the  space of bounded and uniformly continuous functions on $\Delta_e$. 

To prove continuity of $v$ we need to show that $\cG$ maps the space $\dC(\Delta_e)$ into itself and that $v$ is its unique fixed point in that space (recall that we already established that $\cG$ is a contraction).
We shall also need a version of the \emph{Dynamic Programming Principle} suited to this problem, that we are going to prove.

\begin{proposition}[Dynamic Programming Principle]\label{prop:DPP}
	For all functions $w \in \dB_b(\Delta_e)$ and all $T > 0$ the function $\cG w$ satisfies the following identity
	\begin{equation} \label{eq:DPP}
	\cG w(\nu) = \inf_{\alpha \in A} 
	\biggl\{ \int_0^T e^{-\beta t} L(\phi_\nu^{\alpha}(t), \chi_\nu^{\alpha}(t), \alpha(t), w) \, \dd t + e^{-\beta T} \chi_\nu^{\alpha}(T) \cG w(\phi_\nu^{\alpha}(T)) \biggr\}.
	\end{equation}
\end{proposition}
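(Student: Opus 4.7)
Denote the right-hand side of \eqref{eq:DPP} by $\cG^T w(\nu)$. The argument rests on two elementary ingredients: the semigroup identities
\[
\phi_\nu^\alpha(T+s)=\phi_{\phi_\nu^\alpha(T)}^{\alpha(\cdot+T)}(s),\qquad \chi_\nu^\alpha(T+s)=\chi_\nu^\alpha(T)\,\chi_{\phi_\nu^\alpha(T)}^{\alpha(\cdot+T)}(s),
\]
which follow from the uniqueness statement of Proposition \ref{prop:PDPflow} (the ODE for $\phi$ is autonomous in the state once the control is fixed) and from the explicit exponential form of $\chi_\nu^\alpha$; and the fact that $L(\phi,\chi,u,w)$ is linear in its second argument, so that $\chi_\nu^\alpha(T)$ factors out of $L(\phi_\nu^\alpha(T+\cdot),\chi_\nu^\alpha(T+\cdot),\alpha(T+\cdot),w)$.

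For the inequality $\cG w(\nu)\geq\cG^T w(\nu)$ I would fix $\alpha\in A$, split the integral defining $\cG w(\nu)$ at $t=T$, and perform the change of variable $s=t-T$ on the tail. Setting $\tilde\alpha(s)\coloneqq\alpha(s+T)\in A$ and invoking the two ingredients above, the tail becomes
\[
e^{-\beta T}\chi_\nu^\alpha(T)\int_0^\infty e^{-\beta s}L\bigl(\phi_{\phi_\nu^\alpha(T)}^{\tilde\alpha}(s),\chi_{\phi_\nu^\alpha(T)}^{\tilde\alpha}(s),\tilde\alpha(s),w\bigr)\dd s\;\geq\; e^{-\beta T}\chi_\nu^\alpha(T)\,\cG w(\phi_\nu^\alpha(T))
\]
by definition of the infimum in \eqref{eq:operatorG}. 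Taking the infimum over $\alpha\in A$ of the resulting lower bound yields $\cG w(\nu)\geq\cG^T w(\nu)$.

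For the opposite inequality I would argue by $\varepsilon$-optimality and concatenation of controls. Given $\varepsilon>0$, pick $\alpha_1\in A$ that is $\varepsilon$-optimal in the definition of $\cG^T w(\nu)$, and then pick $\alpha_2\in A$ that is $\varepsilon$-optimal for $\cG w(\phi_\nu^{\alpha_1}(T))$, and form the pasted control
\[
\alpha^\star(t)=\alpha_1(t)\,\ind(t<T)+\alpha_2(t-T)\,\ind(t\geq T),
\]
which is measurable and hence lies in $A$. Applying the same splitting plus the semigroup identities in the reverse direction, the integral $\int_0^\infty e^{-\beta t}L(\phi_\nu^{\alpha^\star}(t),\chi_\nu^{\alpha^\star}(t),\alpha^\star(t),w)\dd t$ decomposes into the $[0,T]$-piece with $\alpha_1$ plus $e^{-\beta T}\chi_\nu^{\alpha_1}(T)$ times the value at $\phi_\nu^{\alpha_1}(T)$ along $\alpha_2$, which is at most $\cG w(\phi_\nu^{\alpha_1}(T))+\varepsilon$. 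Combining the two near-optimalities gives $\cG w(\nu)\leq\cG^T w(\nu)+(1+e^{-\beta T})\varepsilon$, and sending $\varepsilon\to 0$ closes the argument.

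The main (and in fact only) obstacle is the careful bookkeeping for the semigroup identities with merely measurable controls; these follow straight from the Carath\'eodory theory behind Proposition \ref{prop:PDPflow} and from the explicit formula defining $\chi_\nu^\alpha$. No measurable-selection argument is needed, because the two $\varepsilon$-optimal controls are chosen pointwise at the two fixed states $\nu$ and $\phi_\nu^{\alpha_1}(T)$, and measurability of the pasted control $\alpha^\star$ is immediate.
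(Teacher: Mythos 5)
Your proposal is correct and follows essentially the same route as the paper's proof: both directions rest on splitting the integral at $t=T$, the flow property of $\phi$, the multiplicative identity for $\chi$ together with the linearity of $L$ in its second argument, and the concatenation of an arbitrary (resp.\ $\varepsilon$-optimal) control on $[0,T]$ with an $\varepsilon$-optimal continuation control. The only cosmetic difference is that the paper writes out the $\leq$ inequality in detail (taking the first control arbitrary and passing to the infimum at the end) and leaves the $\geq$ inequality as ``similar computations,'' whereas you spell out both.
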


\begin{rem}
	It is worth noticing that taking $w = v$ we get the standard statement of the Dynamic Programming Principle.
\end{rem}
\begin{proof}
	Let $T > 0$, $w \in \dB_b(\Delta_e)$ and $\nu \in \Delta_e$ be fixed and let us define $\tilde w(\nu)$ the right hand side of (\ref{eq:DPP}).
	
	We will show first that $\cG w(\nu) \leq \tilde w(\nu)$. Choose an arbitrary $\alpha \in A$ and define $\rho \coloneqq \phi_\nu^{\alpha}(T)$. For some fixed $\epsilon > 0$, let $\alpha^\epsilon \in A$ be such that
	\begin{equation}\label{eq:alphaepsilon}
	\cG w(\rho) + \epsilon \geq \int_0^\infty e^{-\beta t} L(\phi_\rho^{\alpha^\epsilon}(t), \chi_\rho^{\alpha^\epsilon}(t), \alpha^\epsilon(t), w) \, \dd t.
	\end{equation} 
	Next, define the function $\tilde \alpha \colon [0,+\infty) \to U$ as
	\begin{equation*}
	\tilde \alpha(t) = \alpha(t) \ind_{[0,T]}(t) + \alpha^\epsilon(t-T) \ind_{(T, +\infty)}(t). 
	\end{equation*}
	It is clearly measurable, \ie $\tilde \alpha \in A$, and it is straightforward to notice that
	\begin{equation*}
	\cG w(\nu) \leq \int_0^T e^{-\beta t} L(\phi_\nu^{\alpha}(t), \chi_\nu^{\alpha}(t), \alpha(t), w) \, \dd t + \int_T^\infty e^{-\beta t} L(\phi_\nu^{\tilde \alpha}(t), \chi_\nu^{\tilde \alpha}(t), \tilde \alpha(t), w) \, \dd t.
	\end{equation*}
	Thanks to the flow property of $\phi$ we have that for $t > T$ the equality $\phi_\nu^{\tilde \alpha}(t) = \phi_\rho^{\alpha^\epsilon}(t-T)$ holds. Moreover, it can be easily shown that $\chi_\nu^{\tilde \alpha}(t) = \chi_\nu^{\alpha}(T) \chi_\nu^{\alpha^\epsilon}(t-T)$, for $t > T$.
	With this in mind and performing a simple change of variables, we get that
	\begin{equation*}
	\int_T^\infty e^{-\beta t} L(\phi_\nu^{\tilde \alpha}(t), \chi_\nu^{\tilde \alpha}(t), \tilde \alpha(t), w) \, \dd t = e^{-\beta T} \chi_\nu^{\alpha}(T) \int_0^\infty e^{-\beta t} L(\phi_\rho^{\alpha^\epsilon}(t), \chi_\rho^{\alpha^\epsilon}(t), \alpha^\epsilon(t), w) \, \dd t.
	\end{equation*}
	Therefore, we have from (\ref{eq:alphaepsilon}) that for all $\epsilon > 0$
	\begin{equation*}
	\cG w(\nu) \leq  \int_0^T e^{-\beta t} L(\phi_\nu^{\alpha}(t), \chi_\nu^{\alpha}(t), \alpha(t), w) \, \dd t + e^{-\beta T} \chi_\nu^{\alpha}(T) \bigl[\cG w(\rho) + \epsilon \bigr].
	\end{equation*}
	Since $\alpha$ is arbitrary, we can take the limit as $\epsilon \to 0^+$ and then the infimum on the set $A$ to get that $\cG w(\nu) \leq \tilde w(\nu)$.
	The reverse inequality is easily obtained with similar computations.
\end{proof}

We provide now an estimate that will be fundamental in proving the next Proposition.
\begin{lemma}\label{lemma:JTwestimate}
	Let $T > 0$ and $w \in \dC(\Delta_e)$ be fixed and define for all $\nu \in \Delta_e$ and all $\alpha \in A$
	\begin{equation}\label{eq:JTwdefinition}
		\cJ_{T,w}(\nu, \alpha) = \int_0^T e^{-\beta t} L(\phi_\nu^{\alpha}(t), \chi_\nu^{\alpha}(t), \alpha(t), w) \, \dd t.
	\end{equation} 
	Then, under Assumptions \ref{assumption:lambda} and \ref{assumption:costfunction}, there exists constants $C, K_1, K_2 > 0$ and a modulus of continuity $\eta$
	\footnote{\ie a continuous, nondecreasing, subadditive function $\eta \colon [0,+\infty) \to [0,+\infty)$ such that $\eta(t) \to 0$ as $t \downarrow 0$.}
	such that for all $\alpha \in A$
	\begin{equation}
		\bigl|\cJ_{T,w}(\nu, \alpha) - \cJ_{T,w}(\rho, \alpha)\bigr| \leq K_1 |\nu - \rho| + K_2 \eta(C |\nu-\rho|).
	\end{equation}	
\end{lemma}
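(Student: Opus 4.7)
The plan is to bound the difference $|\cJ_{T,w}(\nu,\alpha) - \cJ_{T,w}(\rho,\alpha)|$ pointwise in $t \in [0,T]$ and then integrate. To that end, I first need stability estimates for the two quantities $\phi_\nu^\alpha(t)$ and $\chi_\nu^\alpha(t)$ as functions of the initial point. The uniform Lipschitz estimate (\ref{eq:FLipschitz}) together with Gronwall's inequality applied to the ODE (\ref{eq:relaxedODE}) gives
\[
\sup_{\alpha \in A} \sup_{t \in [0,T]} |\phi_\nu^\alpha(t) - \phi_\rho^\alpha(t)| \leq C\, |\nu - \rho|,
\]
with $C = e^{L_F T}$. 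The Lipschitz estimate (\ref{eq:rLipschitz}) for $r$ combined with the elementary inequality $|e^{-x} - e^{-y}| \leq |x - y|$ for $x, y \geq 0$ then yields a companion bound
\[
\sup_{\alpha \in A} \sup_{t \in [0,T]} |\chi_\nu^\alpha(t) - \chi_\rho^\alpha(t)| \leq C' |\nu - \rho|
\]
for some constant $C'$ depending only on $T$, $L_r$ and $C$.

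Next I would split the integrand
\[
L(\phi, \chi, u, w) = \chi\, \phi\, \bff(u) + \chi\, r(\phi, u) \int_{\Delta_e} w(p)\, R(\phi, u; \dd p)
\]
and treat each summand separately. The first summand is trilinear with $|\phi| \leq 1$, $\chi \leq 1$, and $|\bff|_\infty \leq C_f$ by Assumption \ref{assumption:costfunction}, so a standard telescoping argument using the two stability bounds above produces a contribution bounded by $K^{(1)} |\nu - \rho|$. The component of the second summand coming from varying only $\chi$ is also Lipschitz in $\nu$: this uses the bound $|r| \leq C_r$ from (\ref{eq:rbounded}) and boundedness of $w$, yielding an additional $K^{(2)} |\nu - \rho|$ contribution.

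The main obstacle — and where the modulus $\eta$ must enter — is the dependence on $\phi$ of the map $\phi \mapsto r(\phi, u) \int_{\Delta_e} w(p)\, R(\phi, u; \dd p)$. This map is not globally Lipschitz in $\phi$, because the kernel $R$ is built from the operators $H_b$ defined in (\ref{eq:operatorH}), which behave singularly when $\phi\,\Lambda(u)\,\ind_{\pre(b)}$ vanishes; any naive Lipschitz bound would blow up along those degenerations. However, precisely for $w \in \dC(\Delta_e)$, Proposition \ref{prop:weakFellerR} tells us that this map is continuous on $\Delta_e$ \emph{uniformly in} $u \in U$, hence admits a common modulus of continuity $\eta$ independent of $u$. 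Composing with the flow estimate above and integrating against $e^{-\beta t}\,\dd t$ on $[0,T]$ — using subadditivity of $\eta$ so that we can factor the supremum through the integral — produces a contribution bounded by $K^{(3)}\, \eta(C\,|\nu - \rho|)$.

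Collecting all contributions and relabelling $K_1 = K^{(1)} + K^{(2)}$, $K_2 = K^{(3)}$ yields the claimed inequality, uniformly in $\alpha \in A$. The subtle point, and the only non-routine step, is the appeal to Proposition \ref{prop:weakFellerR} to obtain a modulus of continuity that does not depend on the measurable control $\alpha$; everything else is a straightforward Gronwall-plus-telescoping calculation.
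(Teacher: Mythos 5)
Your argument is correct and follows essentially the same route as the paper's proof: Gronwall for the stability of $\phi_\nu^\alpha$ and $\chi_\nu^\alpha$ in the initial datum, a splitting of the integrand into a part that is Lipschitz in $\nu$ and a part controlled only through the uniform-in-$u$ modulus of continuity supplied by Proposition \ref{prop:weakFellerR}, and then integration against $e^{-\beta t}\,\dd t$. The only cosmetic differences are that the paper factors out the $\chi$-variation first and absorbs the entire bracket $\phi\bff(u)+r(\phi,u)\int w\,\dd R$ into the modulus $\eta$, and that the passage from $\eta(|\phi_\nu^\alpha(t)-\phi_\rho^\alpha(t)|)$ to $\eta(C|\nu-\rho|)$ uses monotonicity of $\eta$ rather than subadditivity, as you state.
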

\begin{proof}
	Let $\alpha \in A$ and $\nu \in \Delta_e$ be fixed. It is clear that $\nu \in \Delta_a$ for some $a \in O$. Let us consider a sequence $(\nu_k)_{k \in \N}$ such that $\nu_k \to \nu$ as $k \to +\infty$. Without loss of generality we can take $(\nu_k)_{k \in \N} \subset \Delta_a$.
	
	First of all, we need an estimate for the term
	\begin{equation*}
	\bigl| L(\phi_\nu^{\alpha}(t), \chi_\nu^{\alpha}(t), \alpha(t), w) - L(\phi_\rho^{\alpha}(t), \chi_\rho^{\alpha}(t), \alpha(t), w) \bigr|.
	\end{equation*}
	Thanks to the linearity of $L$ in the second argument, it is easy to get that for all $t \in [0, T]$
	\begin{multline*}
	\begin{split}
	&\bigl| L(\phi_\nu^{\alpha}(t), \chi_\nu^{\alpha}(t), \alpha(t), w) - L(\phi_\rho^{\alpha}(t), \chi_\rho^{\alpha}(t), \alpha(t), w) \bigr| \\
	\leq &\bigl| \chi_\nu^{\alpha}(t) - \chi_\rho^{\alpha}(t) \bigr|
	\biggl| \phi_\nu^{\alpha}(t) \bff(\alpha(t)) + r(\phi_\nu^{\alpha}(t), \alpha(t)) \int_{\Delta_e} w(p) R(\phi_\nu^{\alpha}(t), \alpha(t); \dd p) \biggr|
	\end{split} \\
	\qquad + \chi_\rho^{\alpha}(t) 
	\biggl| \biggl[ \phi_\nu^{\alpha}(t) \bff(\alpha(t)) + r(\phi_\nu^{\alpha}(t), \alpha(t)) \int_{\Delta_e} w(p) R(\phi_\nu^{\alpha}(t), \alpha(t); \dd p) \biggr] \\
	\qquad \qquad - \biggl[\phi_\rho^{\alpha}(t) \bff(\alpha(t)) + r(\phi_\rho^{\alpha}(t), \alpha(t))
	\int_{\Delta_e} w(p) R(\phi_\rho^{\alpha}(t), \alpha(t); \dd p) \biggr] \biggr|.
	\end{multline*}
	
	The first summand can be estimated observing that Assumptions \ref{assumption:lambda}, \ref{assumption:costfunction} entail that
	\begin{equation*}
	\biggl| \phi_\nu^{\alpha}(t) \bff(\alpha(t)) + r(\phi_\nu^{\alpha}(t), \alpha(t))\int_{\Delta_e} w(p) R(\phi_\nu^{\alpha}(t), \alpha(t); \dd p) \biggr| \leq K
	\end{equation*}
	where $K > 0$ is a constant depending on $C_f$ and $C_r$ defined in (\ref{eq:fbounded}) and (\ref{eq:rbounded}) and on $\sup_{\theta \in \Delta_e} |w(\theta)|$.
	Moreover, by repeatedly applying Gronwall's Lemma, it can be shown that for all $t \in [0,T]$
	\begin{equation*}
	\bigl| \chi_\nu^{\alpha}(t) - \chi_\rho^{\alpha}(t) \bigr| \leq \frac{L_r}{L_F} (e^{L_F T} - 1) e^{L_r T} |\nu - \rho|
	\end{equation*}
	where $L_F$ is the constant defined in (\ref{eq:FLipschitz}).
	
	As for the second summand, notice that $\chi_\nu^{\alpha}(t) \leq 1$. In addition, Assumption \ref{assumption:costfunction} and Proposition \ref{prop:weakFellerR} imply that there exists a modulus of continuity $\eta \colon [0,+\infty) \to [0,+\infty)$ such that
	\begin{equation*}
	\sup_{u \in U} \biggl|\nu \bff(u) + r(\nu,u)\int_{\Delta_e}w(p) R(\nu,u;\dd p) - \rho \bff(u) - r(\rho,u)\int_{\Delta_e}w(p) R(\rho,u;\dd p)\biggr| 
	\leq \eta(|\nu-\rho|).
	\end{equation*}
	So we have that for all $t \in [0,T]$
	\begin{multline*}
	\biggl| \biggl[ \phi_\nu^{\alpha}(t) \bff(\alpha(t)) + r(\phi_\nu^{\alpha}(t), \alpha(t)) \int_{\Delta_e} w(p) R(\phi_\nu^{\alpha}(t), \alpha(t); \dd p) \biggr] \\
	-\biggl[\phi_\nu^{\alpha}(t) \bff(\alpha(t)) + r(\phi_\nu^{\alpha}(t), \alpha(t))
	\int_{\Delta_e} w(p) R(\phi_\nu^{\alpha}(t), \alpha(t); \dd p) \biggr] \biggr| \\ 
	\leq \eta(|\phi_\nu^{\alpha}(t) - \phi_\rho^{\alpha}(t)|) \leq \eta(|\nu - \rho| e^{L_F T}),
	\end{multline*}
	where the last inequality follows from the fact that $\eta$ is non decreasing and Gronwall's Lemma again.
	
	Collecting all the computations made so far and defining $C = e^{L_F T}$ we get
	\begin{equation*}
	\bigl| L(\phi_\nu^{\alpha}(t), \chi_\nu^{\alpha}(t), \alpha(t), w) - L(\phi_\rho^{\alpha}(t), \chi_\rho^{\alpha}(t), \alpha(t), w) \bigr| \leq 
	\frac{L_r}{L_F} (e^{L_F T} - 1) e^{L_r T} |\nu - \rho| + \eta(C |\nu-\rho|).
	\end{equation*}
	
	We are now in a position to prove our claim. It suffices to notice that
	\begin{multline}\label{eq:continuityJ}
	\bigl|\cJ_{T,w}(\nu, \alpha) - \cJ_{T,w}(\rho, \alpha)\bigr| \\ 
	\leq \int_0^T e^{-\beta t} \biggl| L(\phi_\nu^{\alpha}(t), \chi_\nu^{\alpha}(t), \alpha(t), w) - L(\phi_\rho^{\alpha}(t), \chi_\rho^{\alpha}(t), \alpha(t), w) \biggr| \, \dd t \\
	\leq \frac{e^{-\beta T}-1}{\beta}\biggl[ \frac{L_r}{L_F} (e^{L_F T} - 1) e^{L_r T} |\nu - \rho| + \eta(C |\nu-\rho|) \biggr]
	\end{multline}
	and define $K_1 = \frac{e^{-\beta T}-1}{\beta} \frac{L_r}{L_F} (e^{L_F T} - 1) e^{L_r T}$ and $K_2 = \frac{e^{-\beta T}-1}{\beta}$.
\end{proof}

\begin{proposition} \label{prop:domainG}
	Under Assumptions \ref{assumption:lambda} and \ref{assumption:costfunction}, for each function $w \in \dC(\Delta_e)$ we have that $\cG w \in \dC(\Delta_e)$.
\end{proposition}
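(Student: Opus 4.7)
The plan is to leverage the Dynamic Programming Principle (Proposition \ref{prop:DPP}) together with the finite-horizon estimate of Lemma \ref{lemma:JTwestimate}. The key observation is that although we do not yet know $\cG w$ is continuous, we know it is bounded (since $\cG$ is a contraction on $\dB_b(\Delta_e)$ and $w$ is bounded), and the discount factor $e^{-\beta T}$ lets us cut off the horizon at any large $T$ while keeping the tail arbitrarily small.

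First I would reduce to proving continuity on each component $\Delta_a$ separately: since $\Delta_e = \bigcup_{a \in O} \Delta_a$ is a finite union of compact sets that are pairwise disjoint and hence at strictly positive distance from each other (probability measures supported on disjoint subsets of $I$), uniform continuity on each $\Delta_a$ immediately gives (uniform) continuity on $\Delta_e$. So fix $a \in O$ and $\nu, \rho \in \Delta_a$.

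Next, for any $T>0$, Proposition \ref{prop:DPP} yields
\begin{equation*}
\cG w(\nu) = \inf_{\alpha \in A} \Bigl\{ \cJ_{T,w}(\nu,\alpha) + e^{-\beta T}\chi_\nu^{\alpha}(T)\,\cG w\bigl(\phi_\nu^{\alpha}(T)\bigr) \Bigr\}.
\end{equation*}
Given $\epsilon>0$ I pick $\alpha_\epsilon \in A$ that is $\epsilon$-optimal for $\cG w(\nu)$ above, compare with the (non-optimal) choice $\alpha_\epsilon$ in the formula for $\cG w(\rho)$, and estimate the difference as
\begin{equation*}
\cG w(\rho) - \cG w(\nu) \leq \epsilon + \bigl|\cJ_{T,w}(\rho,\alpha_\epsilon) - \cJ_{T,w}(\nu,\alpha_\epsilon)\bigr| + 2 e^{-\beta T} \sup_{\theta \in \Delta_e}|\cG w(\theta)|,
\end{equation*}
using $\chi \leq 1$ and boundedness of $\cG w$. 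The first middle term is controlled by Lemma \ref{lemma:JTwestimate}: it is bounded by $K_1|\nu-\rho| + K_2\,\eta(C|\nu-\rho|)$, with constants depending only on $T$ (and not on $\alpha_\epsilon$). Interchanging the roles of $\nu$ and $\rho$ and sending $\epsilon \to 0$ gives the symmetric bound.

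The final step is a standard two-parameter argument: given $\delta>0$, first fix $T$ large enough that $2 e^{-\beta T} \|\cG w\|_\infty < \delta/2$; then, with $T$ (and hence $K_1$, $K_2$, $C$, $\eta$) frozen, use the continuity of $\eta$ at $0$ to choose $|\nu-\rho|$ small enough that $K_1|\nu-\rho| + K_2\,\eta(C|\nu-\rho|) < \delta/2$. This delivers $|\cG w(\nu)-\cG w(\rho)|<\delta$, proving uniform continuity on $\Delta_a$. The only subtle point — the main conceptual obstacle — is precisely that one cannot estimate the tail term by a continuity modulus of $\cG w$ (which would be circular); the DPP plus the discount factor $e^{-\beta T}$ circumvents this by absorbing the tail into a quantity that depends only on the \emph{boundedness} of $\cG w$, not on its regularity.
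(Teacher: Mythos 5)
Your proof is correct, and it takes a genuinely different --- and in fact simpler --- route than the paper's own argument. Both proofs begin identically: fix $T>0$, apply the Dynamic Programming Principle with an $\epsilon$-optimal control $\alpha_\epsilon$, and control the finite-horizon term $\bigl|\cJ_{T,w}(\nu,\alpha_\epsilon)-\cJ_{T,w}(\rho,\alpha_\epsilon)\bigr|$ via Lemma \ref{lemma:JTwestimate}, whose constants are uniform in the control. They diverge on the tail term $e^{-\beta T}\bigl[\chi_\nu^{\alpha_\epsilon}(T)\cG w(\phi_\nu^{\alpha_\epsilon}(T))-\chi_\rho^{\alpha_\epsilon}(T)\cG w(\phi_\rho^{\alpha_\epsilon}(T))\bigr]$. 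You discard all structure and bound it by $2e^{-\beta T}\sup_{\theta\in\Delta_e}|\cG w(\theta)|$, which forces the two-parameter order of quantifiers (first $T$ large, then $|\nu-\rho|$ small) but yields uniform continuity with no circularity. The paper instead splits the tail into a $|\chi_\nu-\chi_\rho|$ piece (handled by Gronwall) plus $e^{-\beta T}\bigl|\cG w(\phi_\nu^{\alpha_\epsilon}(T))-\cG w(\phi_\rho^{\alpha_\epsilon}(T))\bigr|$, bounds the latter by the oscillation $\zeta(C\delta)$ of $\cG w$ itself, and arrives at the functional inequality $\zeta(\delta)\leq K_0\delta+K_2\eta(C\delta)+e^{-\beta T}\zeta(C\delta)$, which it then iterates along $\delta=C^{-n}$ (using $Ce^{-\beta T}\neq 1$) to conclude $\zeta(0)=0$. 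The paper's route, adapted from Soner, is what one would need to extract a quantitative modulus of continuity for $\cG w$ (\eg a H\"older or Lipschitz rate when $\eta$ is linear); for the purely qualitative statement $\cG w\in\dC(\Delta_e)$ actually asserted --- which is all that is used later to run the fixed-point argument in $\dC(\Delta_e)$ --- your shortcut suffices and avoids both the iteration and the side condition $Ce^{-\beta T}\neq 1$. Your preliminary reduction to the components $\Delta_a$ (pairwise disjoint compact sets at positive mutual distance) is sound and is implicitly present in the paper as well, where comparison points are always taken within a single $\Delta_a$.
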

\begin{rem}
	In the literature we could only find \citep[Theorem 3.3]{soner:optcontrol1} as a result similar to this one. However, it is not directly applicable to our case. Therefore we provide a complete proof of this Proposition, adapting whenever necessary the arguments of the cited work.
\end{rem}
\begin{proof}
	To start, let us pick $\nu, \rho \in \Delta_a$, $a \in O$, such that for some $\delta > 0$  $|\nu - \rho| < \delta$. Let $\epsilon > 0$, $T > 0$ be arbitrarily fixed and choose $\alpha^\epsilon \in A$ such that
	\begin{equation}\label{eq:alphaepsilon2}
		\cG w(\rho) + \epsilon \geq \int_0^T e^{-\beta t} L(\phi_\rho^{\alpha^\epsilon}(t), \chi_\rho^{\alpha^\epsilon}(t), \alpha^\epsilon(t), w) \, \dd t +
		 e^{-\beta T} \chi_\rho^{\alpha^\epsilon}(T) \cG w(\phi_\rho^{\alpha^\epsilon}(T))
	\end{equation}
	according to the Dynamic Programming Principle.
	We immediately get from (\ref{eq:alphaepsilon2})
	\begin{equation*}
	\begin{split}
		\cG w(\nu) - \cG w(\rho) &\leq \cJ_{T,w}(\nu, \alpha^\epsilon) - \cJ_{T,w}(\rho, \alpha^\epsilon) + \epsilon \\
		&\qquad + e^{-\beta T} \bigl[ \chi_\nu^{\alpha^\epsilon}(T) \cG w(\phi_\nu^{\alpha^\epsilon}(T)) - \chi_\rho^{\alpha^\epsilon}(T) \cG w(\phi_\rho^{\alpha^\epsilon}(T)) \bigr] \\
		&\leq \bigl|\cJ_{T,w}(\nu, \alpha^\epsilon) - \cJ_{T,w}(\rho, \alpha^\epsilon)\bigr| + 
		e^{-\beta T} \bigl|\chi_\nu^{\alpha^\epsilon}(T) - \chi_\rho^{\alpha^\epsilon}(T)\bigr| \sup_{\theta \in \Delta_e} \bigl|\cG w(\theta)\bigr| \\
		&\qquad + e^{-\beta T} \bigl|\cG w(\phi_\nu^{\alpha^\epsilon}(T)) - \cG w(\phi_\rho^{\alpha^\epsilon}(T))\bigr| + \epsilon
	\end{split}
	\end{equation*}
	where $\cJ_{T,w}$ was defined in (\ref{eq:JTwdefinition}) and $\sup_{\theta \in \Delta_e} \bigl|\cG w(\theta)\bigr| < +\infty$ since $w$ is bounded and $\cG$ maps bounded functions into bounded functions.
	
	We need to provide an estimate for the terms appearing in the last lines of the previous equation.
	We know from Lemma \ref{lemma:JTwestimate} that
	\begin{equation*}
		\bigl|\cJ_{T,w}(\nu, \alpha^\epsilon) - \cJ_{T,w}(\rho, \alpha^\epsilon)\bigr| \leq K_1 \delta + K_2 \eta(C \delta)
	\end{equation*}
	where $C, K_1, K_2 > 0$, $\eta$ is a modulus of continuity and it is worth remarking that the estimate is independent of $\alpha^\epsilon$. In particular, $C = e^{L_F T}$.
	Applying Gronwall's Lemma one is able to obtain (see the proof of Lemma \ref{lemma:JTwestimate} for more details) 
	\begin{equation*}
		\bigl|\chi_\nu^{\alpha^\epsilon}(T) - \chi_\rho^{\alpha^\epsilon}(T)\bigr| \leq \frac{K_1}{K_2} \delta.
	\end{equation*}
	As for the term $\bigl|\cG w(\phi_\nu^{\alpha^\epsilon}(T)) - \cG w(\phi_\rho^{\alpha^\epsilon}(T))\bigr|$, let us define for $r > 0$
	\begin{equation*}
		\zeta(r) = 	\suptwo{\nu, \rho \in \Delta_e}{|\nu - \rho| < r} \bigl|\cG w(\nu) - \cG w(\rho)\bigr|
	\end{equation*}
	and set $\zeta(0) = \lim_{r \downarrow 0} \zeta(r)$.
	Since $|\phi_\nu^{\alpha^\epsilon}(T) - \phi_\rho^{\alpha^\epsilon}(T)| \leq C \delta$, we get that
	\begin{equation*}
		\bigl|\cG w(\phi_\nu^{\alpha^\epsilon}(T)) - \cG w(\phi_\rho^{\alpha^\epsilon}(T))\bigr| \leq \zeta(C \delta).
	\end{equation*}
	
	Summarizing all the results obtained so far, we get that for all $\epsilon > 0$
	and all $\nu, \rho \in \Delta_a$, $a \in O$, with $|\nu - \rho| < \delta$,
	\begin{equation*}
		\cG w(\nu) - \cG w(\rho) \leq K_1 \delta + K_2 \eta(C \delta) + e^{-\beta T} \sup_{\theta \in \Delta_e} \bigl|\cG w(\theta)\bigr| \frac{K_1}{K_2} \delta + e^{-\beta T} \zeta(C \delta) + \epsilon.
	\end{equation*}
	Thus, as $\epsilon \to 0^+$ and defining $K_0 = K_1 + e^{-\beta T} \sup_{\theta \in \Delta_e} \bigl|\cG w(\theta)\bigr| \frac{K_1}{K_2}$,
	\begin{equation}\label{eq:zetaestimate}
		\zeta(\delta) \leq K_0 \delta + K_2 \eta(C \delta) + e^{-\beta T} \zeta(C \delta).
	\end{equation}
	
	Now it is left to prove that $\zeta$ is a modulus of continuity for the function $\cG w$ and to do so it suffices to show that $\zeta(0) = 0$.
	Let us choose $\delta = \frac{1}{C^n}$, for some $n \in \N$. Since $C = e^{L_F T} > 1$, proving that $\zeta(0) = 0$ is equivalent to verify that $\lim_{n \to +\infty} \zeta(\frac{1}{C^n}) = 0$, by definition of $\zeta$ in $0$. Assuming, without loss of generality, that $C e^{-\beta T} \neq 1$ and iterating the inequality shown in (\ref{eq:zetaestimate}) we get
	\begin{equation*}
	\begin{split}
		\zeta(0) &\leq \lim_{n \to +\infty} \biggl[
		\frac{K_0}{C^n} \sum_{j = 0}^{n-1} (C e^{-\beta T})^j + 
		K_2 \eta\Bigl(\frac{1}{C^n}\Bigr) \sum_{j = 0}^{n-1} (e^{-\beta T})^j +
		e^{-n\beta T} \zeta(1) \biggr] \\
		&\leq \lim_{n \to +\infty} \biggl[ 
		\frac{1}{1-Ce^{-\beta T}}\Bigl[\frac{K_0}{C^n} - e^{-n\beta T}\Bigr] +
		\frac{K_2}{1-e^{-\beta T}}\eta\Bigl(\frac{1}{C^n}\Bigr) [1-e^{-n\beta T}]\biggr] = 0,
	\end{split}
	\end{equation*}
	hence the desired result.
\end{proof}

We are now in a position to state the first characterization of the PDP value function $v$.
\begin{theorem}
	Under Assumptions \ref{assumption:lambda} and \ref{assumption:costfunction} we have that $v$ is the unique fixed point of the operator $\cG$  in the space of continuous functions on $\Delta_e$.
\end{theorem}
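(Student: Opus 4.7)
The plan is to leverage Proposition~\ref{prop:domainG} together with the already established fact that $v$ is the unique fixed point of $\cG$ on $\dB_b(\Delta_e)$. The key observation is that $\dC(\Delta_e)$, endowed with the sup norm, is a closed subspace of $\dB_b(\Delta_e)$, hence itself a Banach space. Proposition~\ref{prop:domainG} shows that $\cG$ maps $\dC(\Delta_e)$ into itself, and the contraction property of $\cG$ on $\dB_b(\Delta_e)$ (which we have already recalled holds under Assumptions \ref{assumption:lambda} and \ref{assumption:costfunction}) carries over verbatim to the restriction of $\cG$ to $\dC(\Delta_e)$.

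Therefore, by the Banach fixed point theorem applied in $\dC(\Delta_e)$, the operator $\cG$ admits a unique fixed point $\tilde v \in \dC(\Delta_e)$. Since any element of $\dC(\Delta_e)$ is in particular an element of $\dB_b(\Delta_e)$, such $\tilde v$ is also a fixed point of $\cG$ acting on $\dB_b(\Delta_e)$. By the uniqueness of the fixed point in this larger space (which we already know to be $v$), we conclude $\tilde v = v$, so that $v \in \dC(\Delta_e)$ and coincides with the unique fixed point of $\cG$ in this space.

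Alternatively, and essentially equivalently, one can argue by iteration: pick any $w_0 \in \dC(\Delta_e)$ (e.g.\ $w_0 \equiv 0$) and define $w_{n+1} = \cG w_n$. By Proposition~\ref{prop:domainG} each $w_n$ is continuous, and by the contraction property the sequence $(w_n)_{n \in \N}$ converges in the sup norm to the unique fixed point of $\cG$ in $\dB_b(\Delta_e)$, which is $v$. Since uniform limits of continuous functions on the compact set $\Delta_e$ are continuous, $v \in \dC(\Delta_e)$.

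No step is truly delicate here, as all the heavy lifting has been carried out in Proposition~\ref{prop:domainG} (invariance of $\dC(\Delta_e)$ under $\cG$, itself the main technical result of this subsection) and in the previously established contraction/uniqueness statement for $\cG$ on $\dB_b(\Delta_e)$. The only point requiring mild care is to note that $\dC(\Delta_e)$ really is a \emph{closed} subset of $\dB_b(\Delta_e)$ in the sup norm (a standard fact for bounded continuous functions on a compact metric space), so that it is legitimate to invoke Banach's fixed point theorem directly on $\dC(\Delta_e)$.
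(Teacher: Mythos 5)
Your proposal is correct and follows essentially the same route as the paper, which likewise combines the uniqueness of the fixed point in $\dB_b(\Delta_e)$, the contraction property of $\cG$, and Proposition~\ref{prop:domainG}. Your additional remarks (closedness of $\dC(\Delta_e)$ in the sup norm, and the equivalent iteration argument) merely make explicit the details the paper leaves implicit.
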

\begin{proof}
	The result follows by combining the fact that $v$ is the unique fixed point of $\cG$ in the space $\dB_b(\Delta_e)$, the fact that the operator $\cG \colon \dC_b(\Delta_e) \to \dC_b(\Delta_e)$ is a contraction mapping and, finally, Proposition \ref{prop:domainG}.
\end{proof}

\subsection{The HJB equation}
Now we move to the second characterization of the PDP value function $v$ in the sense of viscosity solutions.
Using standard arguments of control theory, the Dynamic Programming Principle stated in Proposition \ref{prop:DPP} admits a \emph{local} version in the form the following \mbox{\emph{Hamilton-Jacobi-Bellman}} equation
\begin{equation}\label{eq:HJB}
H(\nu, \dD v(\nu), v) + \beta v(\nu) = 0, \quad \nu \in \Delta_e.
\end{equation}
The function $H \colon \Delta_e \times \R^{\abs{I}} \times \dC(\Delta_e) \to \R$ is called the \emph{hamiltonian} and is defined as
\begin{equation}
H(\nu, \bfb, w) \coloneqq \sup_{u \in U} 
\biggl\{-F(\nu, u)\bfb - \nu \bff(u) - r(\nu, u)
\int_{\Delta_e} \bigl[w(p) - w(\nu)\bigr] R(\nu, u; \dd p) \biggr\}.
\end{equation}

The aim of this subsection to characterize the PDP value function $v$ as the unique \emph{constrained viscosity solution} of the HJB equation (\ref{eq:HJB}).
This concept has been developed by H.~M.~Soner. In \citep{soner:optcontrol1} it is used to characterize the value function of a deterministic optimal control problem with state space constraint; in \citep{soner:optcontrol2} the author extends this definition to study the solution to an \mbox{integro-differential} HJB, associated to an optimal control problem of a PDP with state space constraint.

This approach is particularly well suited to our problem, not only because of the similarities between our situation and the one studied in \citep{soner:optcontrol2}, but also because of the fact that the state space constraint is embedded in our formulation. In fact, the trajectories of the PDP $\bar \pi$ lie in the effective simplex $\Delta_e$ and may as well take values on the boundary of $\Delta_e$. Despite these similarities we will not able to apply directly results of \citep{soner:optcontrol2} to our problem. Some assumptions are not satisfied in our case, \eg Hypothesis (1.3) of that paper, and the proof of the main theorem relies on a slightly different (and somewhat more classical) version of the Dynamic Programming Principle. We will, then, provide a full proof of the following Theorem \ref{th:valuefunctionviscsol} adapting the arguments given in \citep[Th. 1.1]{soner:optcontrol2} as needed. 

First, let us recall the definition of constrained viscosity solution.
In what follows, whenever $K$ is a subset of $\Delta_e$, we will denote by $\bar K$ its relative closure and by $\Int K$ its relative interior. It is understood that all statements referring to topological properties are with respect to the relative topology of $\Delta_e$ as a subset of $\R^{\abs{I}}$ (the latter one equipped with the standard euclidean topology). The set $\dC^1(K)$ will be the set of continuously differentiable real functions on $K$.
\begin{definition}
	A uniformly continuous and bounded function $w \colon \bar K \to \R$ is called a
	\begin{itemize}
		\item \emph{viscosity subsolution} of $H(\nu, \dD w(\nu), w) + \beta w(\nu) = 0$ on $K$ if
		\begin{equation*}
			H(\rho, \dD \psi(\rho), w) + \beta w(\rho) \leq 0
		\end{equation*} 
		whenever $\psi \in \dC^1(N_\rho)$ and $(w - \psi)$ has a global maximum, relative to $K$, at $\rho \in K$,
		where $N_\rho$ is a neighborhood of $\rho$.
		\item \emph{viscosity supersolution} of $H(\nu, \dD w(\nu), w) + \beta w(\nu) = 0$ on $K$ if
		\begin{equation*}
			H(\rho, \dD \psi(\rho), w) + \beta w(\rho) \geq 0
		\end{equation*} 
		whenever $\psi \in \dC^1(N_\rho)$ and $(w - \psi)$ has a global minimum, relative to $K$, at $\rho \in K$,
		where $N_\rho$ is a neighborhood of $\rho$.
		\item \emph{constrained viscosity solution} of $H(\nu, \dD w(\nu), w) + \beta w(\nu) = 0$ on $\bar K$ if it is a subsolution on $K$ and a supersolution on $\bar K$.
	\end{itemize}
\end{definition}
\begin{rem}
	The fact that $w$ is a viscosity supersolution on the closed set $\bar K$ of (\ref{eq:HJB}) automatically imposes a boundary condition. For more details, see the Remark following \citep[Definition 2.1]{soner:optcontrol1}
\end{rem}

Before stating the main Theorem, we need the following lemma. We omit its proof for the reader's convenience. It can be found in \citep[Lemma 2.1]{soner:optcontrol2} (see also Remark 2.1 therein).
\begin{lemma} \label{lemma:hamiltonian}
	Let Assumption \ref{assumption:lambda} hold. A function $w \in \dC(\Delta_e)$ is a viscosity subsolution on $\Int \Delta_e$ (resp. supersolution on $\Delta_e$) of $H(\nu, \dD w(\nu), w) + \beta w(\nu) = 0$ if and only if
	\begin{equation*}
		H(\rho, \dD \psi(\rho), \psi) + \beta w(\rho) \leq (\text{resp. }\geq)\, 0,
	\end{equation*} 
	whenever $\psi \in \dC^1(N_\rho) \cap \dC_b(\Delta_e)$ and $(v-\psi)$ has a global maximum relative to $\Delta_e$ at $\rho \in \Int \Delta_e$ (resp. minimum at $\rho \in \Delta_e$), where $N_\rho$ is a neighborhood of $\rho$.
\end{lemma}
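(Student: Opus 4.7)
The plan is to prove the two directions of the equivalence separately, exploiting the discrete structure of the transition kernel $R$ and the decomposition of $\Delta_e$ into well-separated components.

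For the ``only if'' direction, fix $\rho$ and a $\psi \in \dC^1(N_\rho) \cap \dC_b(\Delta_e)$ admissible in the lemma. Such a $\psi$ is automatically a test function in the sense of the viscosity subsolution (resp.\ supersolution) definition, so $H(\rho, \dD\psi(\rho), w) + \beta w(\rho) \leq 0$ (resp.\ $\geq 0$). The global maximum (resp.\ minimum) property of $w - \psi$ at $\rho$ gives $w(p) - w(\rho) \leq \psi(p) - \psi(\rho)$ (resp.\ $\geq$) pointwise; combined with $r \geq 0$ and the fact that $R(\rho, u; \cdot)$ is a probability measure, this yields $H(\rho, \dD\psi(\rho), w) \geq H(\rho, \dD\psi(\rho), \psi)$ (resp.\ $\leq$). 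Chaining the two inequalities produces the lemma conclusion.

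For the ``if'' direction, given an arbitrary subsolution (resp.\ supersolution) test function $\psi_0 \in \dC^1(N_\rho)$, the strategy is to build a lemma-admissible $\psi \in \dC^1(N_\rho') \cap \dC_b(\Delta_e)$ such that $\dD\psi(\rho) = \dD\psi_0(\rho)$ and $H(\rho, \dD\psi(\rho), \psi) = H(\rho, \dD\psi(\rho), w)$, so that the lemma hypothesis applied to $\psi$ immediately delivers the required inequality for $\psi_0$. The crucial observation that enables the matching of the nonlocal terms is the discrete structure of $R$: writing $\rho \in \Delta_a$, one sees from (\ref{eq:PDPcharacteristictriple}) that the support of $R(\rho, u; \cdot)$ is contained in $\bigcup_{b \neq a}\Delta_b$. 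Since the level sets $\pre(a)$ and $\pre(b)$ are disjoint for $a \neq b$, a direct computation shows that $\Delta_a$ and $\Delta_b$ are separated by a positive Euclidean distance bounded below uniformly by $1/\abs{\pre(a)}$. Consequently, any sufficiently small neighborhood of $\rho$ in $\Delta_e$ lies inside $\Delta_a$ and is disjoint from the support of $R(\rho, u; \cdot)$ for every $u \in U$.

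The construction is then explicit: set $c = w(\rho) - \psi_0(\rho)$ and $\tilde\psi_0 = \psi_0 + c$, choose open neighborhoods $N_\rho' \Subset N_\rho'' \subset N_\rho \cap \Delta_a$, and pick a smooth cutoff $\phi$ with $\phi \equiv 1$ on $N_\rho'$ and $\phi \equiv 0$ off $N_\rho''$. Define $\psi := \phi\,\tilde\psi_0 + (1-\phi)\,w$ on $\Delta_a$ and $\psi := w$ on each $\Delta_b$ with $b \neq a$. Continuity of $\psi$ on $\Delta_e$ is automatic from the separation of the components, $\psi$ is $\dC^1$ on the neighborhood $N_\rho'$ of $\rho$, and $\dD\psi(\rho) = \dD\tilde\psi_0(\rho) = \dD\psi_0(\rho)$. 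The inequality $\tilde\psi_0 \geq w$ (resp.\ $\tilde\psi_0 \leq w$) on $N_\rho$, inherited from the max (resp.\ min) property of $w-\psi_0$ at $\rho$, yields $\psi \geq w$ (resp.\ $\leq w$) globally with equality at $\rho$, so $(w-\psi)$ attains its global max (resp.\ min) relative to $\Delta_e$ at $\rho$ with value $0$. Finally, since $\psi = w$ on $\bigcup_{b \neq a}\Delta_b$, which contains the support of $R(\rho, u; \cdot)$, and $\psi(\rho) = w(\rho)$, the nonlocal integrals in $H(\rho, \dD\psi(\rho), \psi)$ and $H(\rho, \dD\psi(\rho), w)$ coincide, closing the proof.

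The main subtlety is ensuring the global continuity and regularity of the constructed $\psi$ together with the exact identity between the two nonlocal integrals; this is resolved cleanly by the structural observation that $\Delta_e$ decomposes as a disjoint union of simplices at mutual positive distance, which makes continuity across components automatic and lets the nonlocal terms be matched pointwise rather than merely in the limit. Without this feature, one would be forced into an approximation argument based on a uniform continuity estimate for $\rho \mapsto r(\rho, u)\int w\, R(\rho, u; \dd p)$ in the spirit of Proposition \ref{prop:weakFellerR}.
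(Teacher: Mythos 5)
Your argument is correct. Note that the paper itself does not prove this lemma: it defers to \citep[Lemma 2.1]{soner:optcontrol2}, whose argument in the general PDP setting has to perturb the test function and then pass to the limit using a uniform-continuity estimate on $\rho \mapsto r(\rho,u)\int w\,R(\rho,u;\dd p)$ (exactly the fallback you mention in your closing remark). Your route is a legitimate and genuinely simpler alternative tailored to this model: the ``only if'' direction is the standard monotonicity of the nonlocal term in $H$ under the max/min property (correctly using $r \geq 0$ and that $R(\rho,u;\cdot)$ is a probability measure), and the ``if'' direction replaces Soner's approximation by an exact matching of the nonlocal integrals, which is possible precisely because $R(\rho,u;\cdot)$ for $\rho \in \Delta_a$ charges only the points $H_b[\rho\Lambda(u)] \in \Delta_b$ with $b \neq a$, and the components $\Delta_b$ lie at positive Euclidean distance from $\Delta_a$ (your lower bound $1/\abs{\pre(a)}$ is valid, if not sharp). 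The cut-and-paste construction of $\psi$ then works as you describe. Two small points you should make explicit: (i) in the subsolution case the definition takes the maximum of $w-\psi_0$ relative to $\Int\Delta_e$ while your constructed $\psi$ must realize a maximum relative to all of $\Delta_e$; this follows from continuity of $w-\tilde\psi_0$ and the density of the relative interior of the simplex $\Delta_a$ in $\Delta_a$, but it is a genuine step; (ii) the cutoff should satisfy $\operatorname{supp}\phi \Subset N_\rho''$ so that $\phi\,\tilde\psi_0$ extends continuously by zero. Neither affects the validity of the proof.
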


\begin{theorem} \label{th:valuefunctionviscsol}
	Under Assumptions \ref{assumption:lambda} and \ref{assumption:costfunction}, the PDP value function $v$ is the unique constrained viscosity solution of (\ref{eq:HJB}).
\end{theorem}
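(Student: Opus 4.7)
The plan is to establish existence (that $v$ satisfies both the subsolution and supersolution conditions) and then deduce uniqueness through a comparison principle, relying throughout on the Dynamic Programming Principle of Proposition \ref{prop:DPP}, the uniform continuity of $F$ and $r$, and the uniform continuity furnished by Proposition \ref{prop:weakFellerR}. For the subsolution property on $\Int\Delta_e$, suppose $v-\psi$ attains a global maximum at $\rho\in\Int\Delta_e$ with $\psi\in\dC^1(N_\rho)\cap\dC_b(\Delta_e)$ (Lemma \ref{lemma:hamiltonian} allows this restriction), WLOG with $v(\rho)=\psi(\rho)$. For each fixed $u\in U$, plug the constant ordinary control $\alpha(t)\equiv u$ into (\ref{eq:DPP}) with $w=v$ to obtain $v(\rho)\leq\int_0^h e^{-\beta t}L(\phi_\rho^u(t),\chi_\rho^u(t),u,v)\,\dd t+e^{-\beta h}\chi_\rho^u(h)v(\phi_\rho^u(h))$. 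Replace $v(\phi_\rho^u(h))$ by the larger $\psi(\phi_\rho^u(h))$, Taylor-expand to first order in $h$ using the $\dC^1$-regularity of $\psi$ and the ODEs satisfied by $\phi_\rho^u$ and $\chi_\rho^u$, divide by $h$, pass $h\downarrow 0^+$, and take $\sup_{u\in U}$; this produces $H(\rho,\dD\psi(\rho),v)+\beta v(\rho)\leq 0$. For the supersolution property on $\Delta_e$, if $v-\psi$ attains a global minimum at $\rho$ (WLOG $v(\rho)=\psi(\rho)$), for each $\epsilon,h>0$ pick $\alpha^{\epsilon,h}\in A$ that is $\epsilon h$-optimal in (\ref{eq:DPP}); bounding $v(\phi_\rho^{\alpha^{\epsilon,h}}(h))$ from below by $\psi(\phi_\rho^{\alpha^{\epsilon,h}}(h))$ and performing the same Taylor expansion uniformly in $\alpha$, the resulting integrand is bounded below by the pointwise infimum over $u\in U$, which equals $-H(\rho,\dD\psi(\rho),v)-\beta v(\rho)$; dividing by $h$ and letting $h,\epsilon\downarrow 0^+$ yields the reverse inequality.

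For uniqueness, the plan is to prove a comparison principle: if $w_1\in\dC(\Delta_e)$ is a viscosity subsolution on $\Int\Delta_e$ and $w_2\in\dC(\Delta_e)$ is a viscosity supersolution on $\Delta_e$, then $w_1\leq w_2$ on $\Delta_e$. Applied to any two constrained viscosity solutions of (\ref{eq:HJB}), this forces equality. Arguing by contradiction, suppose $M:=\max_{\Delta_e}(w_1-w_2)>0$; since $\Delta_e$ is a disjoint union of compact simplices $\Delta_a$, one may localize the argument to a single $\Delta_a$. Following Soner's doubling-of-variables technique, introduce $\Phi_\epsilon(\nu,\rho)=w_1(\nu)-w_2(\rho)-\epsilon^{-1}|\nu-\rho|^2$ augmented by an inner-cone penalty $\eta(\nu)$ designed to push the maximizer $\nu_\epsilon$ into $\Int\Delta_a$, where the subsolution property is available. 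Standard estimates give $|\nu_\epsilon-\rho_\epsilon|\to 0$ and $\epsilon^{-1}|\nu_\epsilon-\rho_\epsilon|^2\to 0$ along a subsequence; applying the subsolution condition at $\nu_\epsilon$ with test function $\rho\mapsto \epsilon^{-1}|\nu_\epsilon-\rho|^2-\eta(\nu_\epsilon)$-adjusted appropriately, and the supersolution condition at $\rho_\epsilon$, then subtracting, should produce $\beta M\leq 0$, the desired contradiction.

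The main obstacle is expected to be controlling the nonlocal term $r(\nu,u)\int_{\Delta_e}[w(p)-w(\nu)]R(\nu,u;\dd p)$ inside $H$: unlike purely local first-order Hamiltonians, these integrals do not cancel when one subtracts the sub- and supersolution inequalities. The key tool will be the uniform continuity in $\nu$ of the map $\nu\mapsto r(\nu,u)\int w(p)\,R(\nu,u;\dd p)$ supplied by Proposition \ref{prop:weakFellerR}, combined with the fact that $w_1-w_2$ is close to its maximum $M$ uniformly on small neighborhoods of $\nu_\epsilon$ and $\rho_\epsilon$, so that the difference $r(\nu_\epsilon,u)\int[w_1(p)-w_1(\nu_\epsilon)]R(\nu_\epsilon,u;\dd p)-r(\rho_\epsilon,u)\int[w_2(p)-w_2(\rho_\epsilon)]R(\rho_\epsilon,u;\dd p)$ can be made arbitrarily small in the limit $\epsilon\downarrow 0$. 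A secondary technical issue is calibrating the inner-cone penalty $\eta$ precisely enough that $\nu_\epsilon\in\Int\Delta_a$ for all small $\epsilon$, while its contribution to the viscosity inequalities (a term involving $\dD\eta(\nu_\epsilon)$) vanishes in the limit so as not to mask the strict positivity of $\beta M$.
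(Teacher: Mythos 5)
Your proposal is correct and, for the subsolution property and for uniqueness, follows essentially the same route as the paper: the subsolution inequality is obtained exactly as in the paper by inserting a constant control into the Dynamic Programming Principle, expanding $e^{-\beta t}\chi_\rho^u(t)\psi(\phi_\rho^u(t))$ to first order and letting the horizon shrink to zero, while uniqueness rests on Soner's doubling-of-variables comparison argument, which the paper simply delegates to \citep{soner:optcontrol2} after checking (via Proposition \ref{prop:weakFellerR}) the uniform continuity needed to control the nonlocal term. Where you genuinely diverge is the supersolution step. The paper, after extracting an $1/n^2$-optimal control on the horizon $1/n$, forms the time averages $F_n$, $K_n$ of the drift and running-cost terms, observes they lie in the compact set $\cco C(\rho)$, passes to a convergent subsequence, and only then notes that the supremum over $\cco C(\rho)$ coincides with the Hamiltonian. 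You instead bound the integrand pointwise from below by its infimum over $u\in U$, which is $-H(\rho,\dD\psi(\rho),\psi)-\beta\psi(\rho)$ by definition of $H$ as a supremum; this makes the convex-hull and subsequence machinery unnecessary and is the more elementary argument. Both are valid; the paper's version is the classical Soner computation, yours exploits the sign structure of the Hamiltonian more directly. Two small cautions on your uniqueness sketch: the localization to a single component $\Delta_a$ only applies to the doubling of variables, since $R(\nu,u;\cdot)$ charges the \emph{other} components, so the nonlocal difference must still be estimated against the global maximum $M$ of $w_1-w_2$ over all of $\Delta_e$ (one only needs the one-sided bound $w_1(p)-w_2(p)\leq M$ there, not that the difference of integrals vanishes); and the conclusion should use Lemma \ref{lemma:hamiltonian} to pass between $H(\cdot,\cdot,\psi)$ and $H(\cdot,\cdot,v)$ in the final inequalities, as the Taylor expansion naturally produces the test function in the nonlocal slot.
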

\begin{proof}
	Uniqueness follows easily from the very same argument given in \citep[Th. 1.1]{soner:optcontrol2}. In fact, the hypothesis labelled as (A1) is satisfied in our framework by each connected component of $\Delta_e$ and other hypotheses are invoked only to show that the functions
	\begin{equation*}
		f_i(\nu, u) = \nu \bff(u) + r(\nu, u)
		\int_{\Delta_e} \bigl[w_i(p) - w_i(\nu)\bigr] R(\nu, u; \dd p),
		\quad \nu \in \Delta_e, u \in U, i = 1,2,
	\end{equation*}
	are uniformly continuous in $\nu$, uniformly with respect to $u$ (here $w_1$, $w_2$ are two arbitrary constrained viscosity solutions of (\ref{eq:HJB}) ).
	This is true in our setting because of Assumption \ref{assumption:costfunction} and Proposition \ref{prop:weakFellerR}.
	Therefore, one can follow the same reasoning to show uniqueness of the solution.
	
	Let us now show that $v$ is a viscosity subsolution on $\Int \Delta_e$ of (\ref{eq:HJB}).
	It is easy to see that in Lemma \ref{lemma:hamiltonian} we can substitute $\psi \in \dC^1(N_\rho) \cap \dC_b(\Delta_e)$ by $\psi \in \dC^1(\Delta_e)$ (see also \citep[Remark 2.1]{soner:optcontrol2}). So, let us fix $\psi \in \dC^1(\Delta_e)$ and $\rho \in \Int \Delta_e$ such that 
	$(v-\psi)(\rho) = \max_{\nu \in \Delta_e}\{(v-\psi)(\nu)\} = 0$.
	Since $v \leq \psi$, from the DPP we get that for all $\alpha \in A$
	\begin{equation} \label{eq:thvisc:DPPinequality}
		v(\rho) = \psi(\rho) \leq \int_0^T e^{-\beta t} L(\phi_\rho^{\alpha}(t), \chi_\rho^{\alpha}(t), \alpha(t), w) \, \dd t + e^{-\beta T} \chi_\rho^{\alpha}(T) \psi(\phi_\rho^{\alpha}(T)).
	\end{equation}
	Differentiating $e^{-\beta t} \chi_\rho^{\alpha}(t) \psi(\phi_\rho^{\alpha}(t))$ we have
	\begin{multline} \label{eq:thvisc:differential}
		\dd\bigl(e^{-\beta t} \chi_\rho^{\alpha}(t) \psi(\phi_\rho^{\alpha}(t))\bigr) =
		e^{-\beta t} \chi_\rho^{\alpha}(t)\\ \bigl\{-\beta \psi(\phi_\rho^{\alpha}(t))
		-r(\phi_\rho^{\alpha}(t), \alpha(t)) \psi(\phi_\rho^{\alpha}(t))
		+F(\phi_\rho^{\alpha}(t), \alpha(t)) \dD \psi(\phi_\rho^{\alpha}(t)) \bigr\} \dd t.
	\end{multline}
	Integrating (\ref{eq:thvisc:differential}) in $[0,T]$ and substituting the result in (\ref{eq:thvisc:DPPinequality}) we obtain
	\begin{multline}
		\int_0^T e^{-\beta t} \chi_\rho^{\alpha}(t) \biggl\{
		\beta \psi(\phi_\rho^{\alpha}(t)) - F(\phi_\rho^{\alpha}(t), \alpha(t)) \dD \psi(\phi_\rho^{\alpha}(t)) \\- \phi_\rho^{\alpha}(t) \bff(\alpha(t))
		-r(\phi_\rho^{\alpha}(t), \alpha(t)) \int_{\Delta_e} \bigl[\psi(p) - \psi(\phi_\rho^{\alpha}(t)) \bigr] R(\phi_\rho^{\alpha}(t), \alpha(t); \dd p) \biggr\} \dd t \leq 0.
	\end{multline}
	By means of Assumption \ref{assumption:costfunction}, Proposition \ref{prop:weakFellerR} and the properties of the flow $\phi_\rho^{\alpha}(\cdot)$, we are able to obtain from the previous inequality the estimate
	\begin{multline*}
		\frac{1}{T} \int_0^T \biggl\{
		\beta \psi(\rho) - F(\rho, \alpha(t)) \dD \psi(\rho) \\- \rho \bff(\alpha(t))
		-r(\rho, \alpha(t)) \int_{\Delta_e} \bigl[\psi(p) - \psi(\rho) \bigr] R(\rho, \alpha(t); \dd p) \biggr\} \dd t \leq h(T)
 	\end{multline*}
 	where $h$ is a continuous function such that $h(0) = 0$.
 	Now, let $t_0 = \frac{\dist(\rho, \partial \Delta_e)}{C_F}$, where $\displaystyle C_F = \sup_{(\nu, u) \in \Delta_e \times U} F(\nu, u)$, so that on $[0, t_0)$ the flow never reaches the boundary of $\Delta_e$. For each fixed $u \in U$ it is clearly possible to pick a control $\alpha \in A$ such that $\alpha(T) = u$, for all $T < t_0$. Using this strategy in the last inequality we get that for all $T \in [0, t_0)$ and all $u \in U$
 	\begin{equation*}
	 	\beta \psi(\rho) - F(\rho, u) \dD \psi(\rho) \\- \rho \bff(u)
	 	-r(\rho, u) \int_{\Delta_e} \bigl[\psi(p) - \psi(\rho) \bigr] R(\rho, u; \dd p) \leq h(T).
 	\end{equation*}
 	Taking the limit as $T \to 0^+$ and the supremum with respect to all $u \in U$ we obtain the subsolution property.
 	
	Let us now show that $v$ is a viscosity supersolution on $\Delta_e$ of (\ref{eq:HJB}).
	Let $\psi \in \dC^1(\Delta_e)$ and $\rho \in \Delta_e$ such that 
	$(v-\psi)(\rho) = \min_{\nu \in \Delta_e}\{(v-\psi)(\nu)\} = 0$.
	Since $v \geq \psi$, from the DPP we get that for all $T > 0$
	\begin{equation}
	v(\rho) = \psi(\rho) \geq \inf_{\alpha \in A} 
	\biggl\{ \int_0^T e^{-\beta t} L(\phi_\rho^{\alpha}(t), \chi_\rho^{\alpha}(t), \alpha(t), w) \, \dd t + e^{-\beta T} \chi_\rho^{\alpha}(T) \psi(\phi_\rho^{\alpha}(T)) \biggr\}.
	\end{equation}
	For each $n \in \N$ consider $T = 1/n$ and pick a control $\alpha^n \in A$ such that
	\begin{equation*}
		\psi(\rho) + \frac{1}{n^2} \geq 
		\int_0^{1/n} e^{-\beta t} L(\phi_\rho^{\alpha}(t), \chi_\rho^{\alpha}(t), \alpha(t), w) \, \dd t + e^{-\beta/n} \chi_\rho^{\alpha}\biggl(\frac{1}{n}\biggr) \psi\biggl(\phi_\rho^{\alpha}\biggl(\frac{1}{n}\biggr)\biggr).	
	\end{equation*}
	With similar computations as before we are able to obtain
	\begin{multline} \label{eq:thvisc:sequence}
		n \int_0^{1/n} \biggl\{
		\beta \psi(\rho) - F(\rho, \alpha^n(t)) \dD \psi(\rho) \\- \rho \bff(\alpha^n(t))
		-r(\rho, \alpha^n(t)) \int_{\Delta_e} \bigl[\psi(p) - \psi(\rho) \bigr] R(\rho, \alpha^n(t); \dd p) \biggr\} \, \dd t \geq h_n
	\end{multline}
	where $h_n \to 0$ as $n \to +\infty$.
	Let us define the following quantities
	\begin{align*}
		F_n &\coloneqq n \int_0^{1/n} F(\rho, \alpha^n(t)) \, \dd t \\
		K_n &\coloneqq n \int_0^{1/n} \biggl\{ \rho \bff(\alpha^n(t))
		+r(\rho, \alpha^n(t)) \int_{\Delta_e} \bigl[\psi(p) - \psi(\rho) \bigr] R(\rho, \alpha^n(t); \dd p) \biggr\} \, \dd t
	\end{align*}
	and the set  $C(\rho) \coloneqq \{(F(\rho, u), \rho \bff(u)
	+r(\rho, u) \int_{\Delta_e} \bigl[\psi(p) - \psi(\rho) \bigr] R(\rho, u; \dd p)), u \in U\}$.
	Notice that $(F_n, K_n) \in \cco C(\rho)$ for all $n \in \N$ and $\cco C(\rho)$ is compact since $C(\rho)$ is bounded. Hence there is a subsequence, still denoted by $(F_n, K_n)$ that converges to some $(F,K) \in \cco C(\rho)$. Therefore, taking the limit as $n$ goes to infinity in (\ref{eq:thvisc:sequence}) we get
	\begin{equation*}
		\beta \psi(\rho) - F \cdot \dD \psi(\rho) - K \geq 0
	\end{equation*}
	so that
	\begin{equation*}
		\beta \psi(\rho) + \sup_{(F, K) \in \cco C(\rho)}\{-F \cdot \dD \psi(\rho) - K\} \geq 0.
	\end{equation*}
	Finally, noticing that
	\begin{equation*}
		\sup_{(F, K) \in \cco C(\rho)}\{-F \cdot \dD \psi(\rho) - K\} = H(\rho, \dD \psi(\rho), \psi)
	\end{equation*}
	we get the desired supersolution property for $v$. 
\end{proof}

\section{Existence of an ordinary optimal control} \label{sec:existenceoptcontrol}
We want now to prove that under some additional assumptions there exists an optimal ordinary control $\bfu^\star \in \cU_{ad}$ such that the minimum in (\ref{eq:valuefunction}) is achieved. Thanks to Theorem \ref{th:costfunctionalidentif} this optimal control exists if and only if there exists an optimal policy $\bfa^\star = (a_0, a_1, \dots) \in \cA_{ad}$ such that for all $n \in \N_0$ the functions $a_n$ take values in the set $A$ of ordinary controls. Since we already established the existence of a stationary optimal policy made of relaxed controls, we want to find an analogous policy made of ordinary controls.

First, we need to find $\alpha^\star \in A$ such that for each fixed $\nu \in \Delta_e$ the functional
\begin{equation} \label{eq:optimalcontrolfunctional}
\cJ(\nu, \alpha) = \int_0^\infty e^{-\beta t} L(\phi_\nu^\alpha(t), \chi_\nu^\alpha(t), \alpha(t), v) \, \dd t
\end{equation}
reaches its infimum (the function $v$ appearing as the last argument of the function $L$ is the PDP value function characterized in the previous section). If this is the case, then an optimal stationary policy $\bfa^\star \in \cA_{ad}$ is granted by standard results in discrete-time control theory, as stated in Theorem \ref{th:optcontrol}. 

\begin{theorem}\label{th:minimizer}
	Let Assumptions \ref{assumption:lambda} and \ref{assumption:costfunction} hold and suppose that for each $\rho \in \Delta_e$ and $s \in [0, 1]$ the set
	\begin{equation*}
	 C(\rho, s) = \{(f, g, l) \in \Delta_e \times [0,1] \times \R \text{ s.t. } 
	 f = F(\rho, u), g = -r(\rho, u)s, l \geq L(\rho, s, u, v), u \in U \}
	\end{equation*}
	is convex.
	
	Then for each fixed $\nu \in \Delta_e$ there exists $\alpha^\star \in A$ such that the infimum of the functional $\cJ$ appearing in (\ref{eq:optimalcontrolfunctional}) is achieved.
\end{theorem}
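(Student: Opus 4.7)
The plan is to use the existence of an optimal relaxed control $m^\star\in\cM$ for the functional $\cJ(\nu,\cdot)$, which follows from Proposition \ref{prop:optpolicy} and \citep[Corollary 9.12.1]{bertsekas:stochoptcontrol} applied to the fixed-point characterization of $v$ via $\cT$; the idea is then to convert $m^\star$ into an ordinary control $\alpha^\star$ by a Filippov-type measurable selection argument that exploits precisely the convexity of $C(\rho,s)$. Set $\rho(t) := \phi_\nu^{m^\star}(t)$, $s(t) := \chi_\nu^{m^\star}(t)$, and
\begin{equation*}
    f(t) := \int_U F(\rho(t),\fru)\,m^\star(t;\dd\fru),\quad g(t) := -s(t)\int_U r(\rho(t),\fru)\,m^\star(t;\dd\fru),
\end{equation*}
\begin{equation*}
    l(t) := \int_U L(\rho(t),s(t),\fru,v)\,m^\star(t;\dd\fru),
\end{equation*}
so that, by the ODEs defining $\phi_\nu^{m^\star}$ and $\chi_\nu^{m^\star}$, one has $\dot\rho(t)=f(t)$ and $\dot s(t)=g(t)$ for a.e. $t\geq 0$.

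The key step is to show that $(f(t),g(t),l(t))\in C(\rho(t),s(t))$ for a.e. $t$. The set
\begin{equation*}
    \Gamma(t) := \{(F(\rho(t),\fru),-s(t) r(\rho(t),\fru),L(\rho(t),s(t),\fru,v)) : \fru\in U\}
\end{equation*}
is compact by continuity of the integrands in $\fru$ and compactness of $U$, and $C(\rho(t),s(t))$ coincides with $\Gamma(t)$ augmented by the non-negative ray on the third coordinate. Since $(f(t),g(t),l(t))$ is the $m^\star(t;\cdot)$-average of points of $\Gamma(t)$, a standard Aumann-type result places it inside the closed convex hull of $\Gamma(t)$; the convexity hypothesis on $C(\rho(t),s(t))$ ensures that this hull is contained in $C(\rho(t),s(t))$, which yields the claim.

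Next I would apply the Kuratowski--Ryll-Nardzewski measurable selection theorem to the set-valued map
\begin{equation*}
    \Phi(t) := \{\fru\in U : F(\rho(t),\fru)=f(t),\ s(t)r(\rho(t),\fru)=-g(t),\ L(\rho(t),s(t),\fru,v)\leq l(t)\},
\end{equation*}
whose values are non-empty (by the previous step, noting that $s(t)>0$ for every $t$ by the bound (\ref{eq:rbounded})) and closed in the compact Polish space $U$, and whose graph is Borel by measurability of $f,g,l,\rho,s$ in $t$ and continuity of $F,r,L$ in $\fru$. A measurable selector produces $\alpha^\star\in A$ satisfying $F(\rho(t),\alpha^\star(t))=f(t)$, $r(\rho(t),\alpha^\star(t))s(t)=-g(t)$ and $L(\rho(t),s(t),\alpha^\star(t),v)\leq l(t)$ a.e. Consequently $\rho$ and $s$ solve the same Cauchy problems as $\phi_\nu^{\alpha^\star}$ and $\chi_\nu^{\alpha^\star}$; the Lipschitz estimates (\ref{eq:FLipschitz}) and (\ref{eq:rLipschitz}) combined with Carath\'eodory uniqueness force $\phi_\nu^{\alpha^\star}\equiv\rho$ and $\chi_\nu^{\alpha^\star}\equiv s$. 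Integrating the pointwise inequality on $L$ against $e^{-\beta t}$ gives
\begin{equation*}
    \cJ(\nu,\alpha^\star)=\int_0^\infty e^{-\beta t}L(\rho(t),s(t),\alpha^\star(t),v)\,\dd t\leq \int_0^\infty e^{-\beta t}l(t)\,\dd t = \cJ(\nu,m^\star)=v(\nu),
\end{equation*}
and since $A\subset\cM$ the reverse inequality is automatic, so $\alpha^\star$ attains the infimum.

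The main obstacle is the convex-hull step: the passage from ``$(f,g,l)$ is an $m^\star$-integral of points of $\Gamma$'' to ``$(f,g,l)\in C(\rho,s)$ a.e.'' requires the Aumann/Lyapunov representation of integrals of set-valued maps and a careful pointwise-a.e. argument, while the subsequent measurable selection step demands verification that $\Phi$ has a Borel graph uniformly in $t$. Both are standard tools, but they carry the entire weight of turning the convexity hypothesis on $C(\rho,s)$ into the existence of an ordinary minimizer.
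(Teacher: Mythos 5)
Your argument is correct, but it follows a genuinely different route from the paper's. You start from the optimal relaxed control $m^\star$ attaining the infimum in $\cT v(\nu)$ (whose existence the paper establishes via \citep[Corollary 9.12.1]{bertsekas:stochoptcontrol}) and then eliminate the relaxation by a Filippov-type selection: the averaged velocity, jump-rate and running-cost triple is the barycenter of a probability measure on the compact set $\Gamma(t)\subset C(\rho(t),s(t))$, hence lies in $C(\rho(t),s(t))$ by convexity, and a measurable selector together with Carath\'eodory uniqueness for the ODEs of $\phi$ and $\chi$ yields an ordinary control with $\cJ(\nu,\alpha^\star)\leq \cJ(\nu,m^\star)=v(\nu)=\cG v(\nu)=\inf_{\alpha\in A}\cJ(\nu,\alpha)$. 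The paper instead runs a direct method entirely inside $A$: it takes a minimizing sequence of ordinary controls, compactifies the trajectories by Ascoli--Arzel\`a, extracts weak $\dL^1_\beta$ limits of the velocity/rate/cost functions, upgrades to a.e.\ convergence of convex combinations via Mazur's theorem, uses closedness and convexity of $C(\rho,s)$ to place the limit in $C(\phi(t),\chi(t))$, and only then applies the same kind of measurable selection, concluding with Fatou. Your version is shorter and cleanly exploits the relaxed-control machinery already built (it is the classical chattering-elimination argument), at the price of leaning on the prior existence of $m^\star$ and on $\cG v=\cT v=v$; the paper's version is self-contained at the level of ordinary controls and does not need the optimal relaxed policy. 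Two small points of care in your write-up: the ``Aumann-type'' step is really just the elementary fact that the barycenter of a probability measure on a compact set lies in its (closed) convex hull, so no Lyapunov-type machinery is needed; and the Kuratowski--Ryll-Nardzewski/Borel-graph selection typically produces a Lebesgue-measurable selector, which must be modified on a null set to land in the Borel class $A$ --- the same caveat applies to the selection theorems the paper cites, so this does not affect the comparison.
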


\begin{proof}
	Fix $\nu \in \Delta_e$ and let us write (\ref{eq:optimalcontrolfunctional}) in a lighter way, suppressing the explicit mention of the value function $v$ and the dependence on the control $\alpha$ and $\nu$ of the functions $\phi_\nu^\alpha$ and $\chi_\nu^\alpha$. We will then write
	\begin{equation*}
	\cJ(\alpha) = \int_0^\infty e^{-\beta t} L(\phi(t), \chi(t), \alpha(t)) \, \dd t.
	\end{equation*}
	
	Let $\alpha_n \in A$, $n \in \N$ be a minimizing sequence for $\cJ$ (\ie $\cJ(\alpha_n) \to \inf_{\alpha} \cJ(\alpha)$ as $n \to +\infty$) and let $(\phi_n, \chi_n)_{n \in \N}$ be the corresponding trajectories of the flow and the survival distribution of the first jump time of the PDP.
	For each $n \in \N$, $\phi_n \in \dC([0, +\infty); \Delta_e)$ and $\chi_n \in \dC([0, +\infty); [0,1])$. It can be easily checked that both sequences $(\phi_n)_{n \in \N}$ and $(\chi_n)_{n \in \N}$ are uniformly bounded and equicontinuous on each compact subset of $[0, +\infty)$, hence by \mbox{Ascoli-Arzel\`a} theorem we get that there exist
	$\phi \in \dC([0, +\infty); \Delta_e)$ and $\chi \in \dC([0, +\infty); [0,1])$ such that, up to a subsequence, $\phi_n \to \phi$ and $\chi_n \to \chi$ uniformly on each compact subset of $[0, +\infty)$.
	
	Let us now define for all $t \geq 0$
	\begin{itemize}
		\item $F_n(t) = F(\phi_n(t), \alpha_n(t))$,
		\item $G_n(t) = -r(\phi_n(t), \alpha_n(t)) \chi_n(t)$,
		\item $L_n(t) = L(\phi_n(t), \chi_n(t), \alpha_n(t))$.
	\end{itemize}
	Denoting by $\dL_\beta^1$ the weighted $\dL^1$ space (with weight given by the discount factor $\beta$), it can be easily shown that, for each $n \in \N$, $F_n \in \dL_\beta^1([0, +\infty); \Delta_e)$, $G_n \in \dL_\beta^1([0, +\infty); \R)$, $L_n \in \dL_\beta^1([0, +\infty); \R)$ and that the three sequences $(F_n)_{n \in \N}$, $(G_n)_{n \in \N}$ and $(L_n)_{n \in \N}$ are uniformly bounded and uniformly integrable. Hence there exist $\hat F \in \dL_\beta^1([0, +\infty); \Delta_e)$, $\hat G \in \dL_\beta^1([0, +\infty); \R)$ and $\hat L \in \dL_\beta^1([0, +\infty); \R)$ such that, up to a subsequence, $F_n \rightharpoonup \hat F$, $G_n \rightharpoonup \hat G$ and $L_n \rightharpoonup \hat L$ weakly in $\dL_\beta^1$.
	
	By Mazur's Theorem (see \eg \citep[Corollary 3.8, p. 61]{brezis:functionalanalysis}, or \citep[Theorem 2, p. 120]{yosida:functionalanalysis}), there exist sequences, still denoted by $(F_n)_{n \in \N}$, $(G_n)_{n \in \N}$ and $(L_n)_{n \in \N}$, that are convex combinations of the elements of the original ones, such that $F_n \rightarrow \hat F$, $G_n \rightarrow \hat G$ and $L_n \rightarrow \hat L$ strongly in $\dL_\beta^1$ and also, again up to a subsequence, a.e. in $[0, +\infty)$.
	Thanks to the hypotheses we have that the functions $F$, $L$ and $-r(\rho, u)s$ are continuous on the compact set $\Delta_e \times [0,1] \times U$ and it can be proved that the sets $C(\rho, s)$ are closed for each $\rho \in \Delta_e$ and $s \in [0,1]$ (see \eg \citep[8.5.vi, p. 296]{cesari:optimization}). Therefore, for almost all $t \geq 0$ the triple $(\hat F(t), \hat G(t), \hat L(t))$ belongs to the set $C(\phi(t), \chi(t))$ and we can apply standard measurable selection theorems (see \eg \citep[8.2.ii, p. 277]{cesari:optimization}, or \citep[Corollary 2.26, p. 102]{liyong:optimalcontroltheory}) to obtain a measurable function $\alpha^\star$ such that
	\begin{itemize}
		\item $\hat F(t) = F(\phi(t), \alpha^\star(t))$,
		\item $\hat G(t) = -r(\phi(t), \alpha^\star(t)) \chi(t)$,
		\item $\hat L(t) = L(\phi(t), \chi(t), \alpha^\star(t)) + z(t)$,
	\end{itemize}
	where $z$ is a \mbox{non-negative} function defined on $[0, +\infty)$.
	
	Now it remains to prove that $\alpha^\star$ is optimal for the functional $\cJ$.
	Let $(\gamma_{k, n})$, where $n \in \N$ and $k \geq n$, be the system of \mbox{non-negative} numbers of Mazur's Theorem, such that for each $n \in \N$
	\begin{align}
		\sum_{k = n}^{K_n} \gamma_{k, n} = 1, & & 
		L(\phi(t), \chi(t), \alpha^\star(t)) = 
		\lim_{n \to +\infty} \sum_{k = n}^{K_n} \gamma_{k, n} L(\phi_k(t), \chi_k(t), \alpha_k(t)).
	\end{align}
	First of all, let us notice that $z$ has to be zero a.e. in $[0, +\infty)$. If this were not the case, we would reach a contradiction (arguing as in the following lines) with the fact that $(\alpha_n)_{n \in \N}$ is a minimizing sequence for $\cJ$.
	Since the function $L$ is bounded by some constant $K > 0$ and obviously the function $Ke^{-\beta t} \in \dL^1([0, +\infty))$, we can apply Fatou's Lemma to obtain
	\begin{equation}
	\begin{split}
		\cJ(\alpha^\star) &= \int_0^\infty e^{-\beta t} L(\phi(t), \chi(t), \alpha^\star(t)) \\ 
		&\leq \liminf_{n \to +\infty} \sum_{k = n}^{K_n} \gamma_{k n} \int_0^\infty e^{-\beta t} L(\phi_k(t), \chi_k(t), \alpha_k(t)) \\
		&= \liminf_{n \to +\infty} \sum_{k = n}^{K_n} \gamma_{k n} \cJ(\alpha_k) = \inf_{\alpha} \cJ(\alpha).
	\end{split}
	\end{equation}
	The claim follows since clearly $\inf_{\alpha} \cJ(\alpha) \leq \cJ(\alpha^\star)$.
\end{proof}

\begin{rem}
	Convexity of the sets $C(\rho, s)$ is guaranteed, for instance, when
	\begin{itemize}
		\item $U \subset \R$ is a closed interval.
		\item Matrix coefficients $\lambda_{ij}(u)$ are linear in $u$, for all $i, j \in I$, $i \ne j$.
		\item The functions $u \mapsto f(i, u)$ are convex for each $i \in I$ .
	\end{itemize}
\end{rem}

We are now ready to state the main result of this Section. To be fully precise its proof would require to formulate the entire control problem in a broader setting. This should be done to allow for more general control policies, namely universally measurable ones. However, this formulation does not pose any particular problem (the interested reader may consult \citep{bertsekas:stochoptcontrol}) and it is irrelevant to the results of this paper. Therefore, we will omit all unnecessary technical details.

\begin{theorem}\label{th:optcontrol}
	For each initial law $\mu \in \Delta$ there exists an optimal ordinary stationary policy $\bfa^\star \in \cA_{ad}$ (with corresponding optimal ordinary control $\bfu^\star \in \cU_{ad}$), \ie an admissible policy with values in the set of ordinary controls $A$ such that
	\begin{equation*}
		V(\mu) = J(\mu, \bfu^\star) = \sum_{a \in O} \mu\bigl(\pre(a)\bigr) \bar J(H_a[\mu], \bfa^\star) = \sum_{a \in O} \mu\bigl(\pre(a)\bigr) v(H_a[\mu]).
	\end{equation*}.
\end{theorem}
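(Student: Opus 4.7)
The plan is to combine the pointwise minimizer provided by Theorem \ref{th:minimizer} with a measurable selection argument to build a stationary policy entirely made of ordinary controls, and then to transfer it back to $\cU_{ad}$ via Theorem \ref{th:costfunctionalidentif} and conclude by Theorem \ref{th:valuefunctionsidentif}. More concretely: since $v$ is the unique fixed point of $\cG$, I have the functional identity
\begin{equation*}
v(\nu)=\inf_{\alpha\in A}\cJ(\nu,\alpha), \qquad \nu\in\Delta_e,
\end{equation*}
where $\cJ(\nu,\alpha)=\int_0^\infty e^{-\beta t}L(\phi_\nu^\alpha(t),\chi_\nu^\alpha(t),\alpha(t),v)\,\dd t$. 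First I would invoke Theorem \ref{th:minimizer} to obtain, for each $\nu\in\Delta_e$, some $\alpha^\star_\nu\in A$ that attains the infimum in the display above, \ie $\cJ(\nu,\alpha^\star_\nu)=v(\nu)$.

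Next I would upgrade this pointwise statement to a selector $\nu\mapsto\alpha^\star(\nu)\in A$ that is measurable in a strong enough sense to be plugged into the PDP construction. This is the step that forces the broader universally measurable setting mentioned in the remark preceding the theorem: since $A$ is a Borel space and $(\nu,\alpha)\mapsto\cJ(\nu,\alpha)$ is lower semicontinuous (using Assumption \ref{assumption:lambda}, Assumption \ref{assumption:costfunction} and Proposition \ref{prop:weakFellerR} together with Fatou-type arguments already used in Lemma \ref{lemma:JTwestimate}), I can apply a standard measurable selection theorem in the spirit of \citep[Proposition 7.33]{bertsekas:stochoptcontrol} to extract a universally measurable map $\alpha^\star\colon\Delta_e\to A$ with $\cJ(\nu,\alpha^\star(\nu))=v(\nu)$ for every $\nu\in\Delta_e$.

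From $\alpha^\star$ I build a stationary policy $\bfa^\star=(a_0^\star,a^\star,a^\star,\dots)\in\cA_{ad}$ in the sense of Remark \ref{rem:stationarypolicies}:
\begin{equation*}
a_0^\star(\nu_0)=\alpha^\star(\nu_0),\qquad a_n^\star(\nu_0,s_1,\nu_1,\dots,s_n,\nu_n)=\alpha^\star(\nu_n),\ n\geq 1.
\end{equation*}
I then claim $\bar J(\nu,\bfa^\star)=v(\nu)$ for every $\nu\in\Delta_e$. Since by construction the one-step Bellman operator is attained, \ie
\begin{equation*}
\int_0^\infty e^{-\beta t}L(\phi_\nu^{\alpha^\star(\nu)}(t),\chi_\nu^{\alpha^\star(\nu)}(t),\alpha^\star(\nu)(t),v)\,\dd t=\cG v(\nu)=v(\nu),
\end{equation*}
an iteration of the Bellman equation together with the strong Markov property of the filtering PDP at jump times (equations \eqref{eq:flow}--\eqref{eq:postjumplocationsPDP}) yields, for every $N\in\N$,
\begin{equation*}
v(\nu)=\bar\e_\nu^{\bfa^\star}\!\!\left[\sum_{n=0}^{N-1}e^{-\beta\bar\tau_n}g\bigl(\bar\pi_{\bar\tau_n},\alpha^\star(\bar\pi_{\bar\tau_n})\bigr)+e^{-\beta\bar\tau_N}v(\bar\pi_{\bar\tau_N})\right].
\end{equation*}
Sending $N\to\infty$ and using boundedness of $g$ and of $v$ together with the $\bar\p_\nu^{\bfa^\star}$-a.s.\ non-explosiveness of the PDP (hence $e^{-\beta\bar\tau_N}\to 0$), I obtain $v(\nu)=\bar J(\nu,\bfa^\star)$.

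Finally, I would invoke the second part of Theorem \ref{th:costfunctionalidentif} applied to $\bfa^\star$ to get a corresponding $\bfu^\star\in\cU_{ad}$ satisfying $J(\mu,\bfu^\star)=\sum_{a\in O}\mu(\pre(a))\bar J(H_a[\mu],\bfa^\star)$; note that $\bfu^\star$ is automatically ordinary because $\alpha^\star(\cdot)$ takes values in $A$ and the explicit construction in that theorem preserves this Dirac-measure structure. Combining with Theorem \ref{th:valuefunctionsidentif} gives the chain
\begin{equation*}
V(\mu)=\sum_{a\in O}\mu(\pre(a))v(H_a[\mu])=\sum_{a\in O}\mu(\pre(a))\bar J(H_a[\mu],\bfa^\star)=J(\mu,\bfu^\star),
\end{equation*}
which is the claim. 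The main obstacle I expect is the measurable selection step: showing that a universally measurable selector exists requires the lower semicontinuity of $\cJ(\nu,\cdot)$ on $A$ and of $\nu\mapsto\inf_\alpha\cJ(\nu,\alpha)=v(\nu)$ that was already established, together with the Borel structure of $A$ recalled after \eqref{eq:actionspace}; the remaining verification of the Bellman iteration is routine given the results already proved.
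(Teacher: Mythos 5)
Your outline follows the paper's strategy up to the selection step, but it has a genuine gap exactly where the paper does extra work. The selector $\alpha^\star\colon\Delta_e\to A$ that you extract is only \emph{universally} measurable, whereas the class $\cA_{ad}$ requires the maps $a_n$ to be Borel-measurable; so the stationary policy $\bfa^\star=(a_0^\star,a^\star,a^\star,\dots)$ you build is not, as written, an element of $\cA_{ad}$. This matters twice over: first, the construction of the controlled PDP laws $\bar\p_\nu^{\bfa}$ and the whole correspondence of Theorem \ref{th:costfunctionalidentif} (which you invoke to produce $\bfu^\star$) rest on Borel measurability of the $a_n$'s, via \citep[Lemma 3]{yushkevich:jumpmodel} and Jacod's construction of $\p_\mu^{\bfu}$ for Borel-predictable controls; with a merely universally measurable selector the resulting $u_n$'s would not be shown to lie in $\cU_{ad}$. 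The paper closes this gap by an additional step you omit: after obtaining the universally measurable selector $a^u$ (via \citep[Prop.\ 7.50]{bertsekas:stochoptcontrol}), it invokes \citep[Th.\ 3.1]{yushkevich:controlledjump} to produce a genuinely admissible (Borel) stationary policy $\bfa^\star\in\cA_{ad}$ with $\bar J(\cdot,\bfa^u)=\bar J(\cdot,\bfa^\star)$ $Q$-a.s., which suffices because $Q$ is concentrated on the finitely many points $H_a[\mu]$, $a\in O$. Acknowledging ``the broader universally measurable setting'' does not by itself repair this, since the final statement requires an element of $\cU_{ad}$ as defined in \eqref{eq:admissiblecontrols}.

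A secondary remark: your verification that $\bar J(\nu,\bfa^\star)=v(\nu)$ by iterating the one-step Bellman identity and letting $N\to\infty$ (using boundedness of $g$, $v$ and non-explosiveness so that $e^{-\beta\bar\tau_N}\to 0$) is a legitimate alternative to the paper's appeal to the discrete-time theory, and it is essentially correct once the measurability issue above is resolved; it is arguably more self-contained, at the price of having to justify the interchange of limit and expectation and the Markov property at jump times under the constructed policy. Also note that the paper's selection step does not go through lower semicontinuity of $\cJ$ on $A$ (the Borel structure on $A$ recalled after \eqref{eq:actionspace} is not a topology for which such semicontinuity is meaningful); it uses only that $\cJ$ is Borel on the product of Borel spaces and that the infimum is attained, which is what Theorem \ref{th:minimizer} provides.
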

\begin{proof}
	Let $\mu \in \Delta$ be fixed. Thanks to Theorem \ref{th:minimizer}, to the fact that the function $\cJ$ appearing in (\ref{eq:optimalcontrolfunctional}) is measurable and to the fact that $\Delta_e$ and $A$ are Borel spaces, standard selection theorems (see \eg \citep[Prop. 7.50]{bertsekas:stochoptcontrol}) ensure that there exists a universally measurable selector $a^u \colon \Delta_e \to A$ such that for all $\nu \in \Delta_e$
	\begin{equation*}
		v(\nu) = \cJ\bigl(\nu, a^u(\nu)\bigr) = \inf_{\alpha \in A} \cJ(\nu, \alpha).
	\end{equation*}
	Let $Q$ be the probability measure on $\Delta_e$ defined in (\ref{eq:probabilityQ}) and let us define the optimal strategy $\bfa^u = (a^u, a^u, \dots)$. Thanks to \citep[Th. 3.1]{yushkevich:controlledjump} we can conclude that there exists a stationary policy $\bfa^\star \in \cA_{ad}$ such that
	\begin{equation*}
		\bar J(\cdot, \bfa^u) = \bar J(\cdot, \bfa^\star) \quad Q-\text{a.s.}.
	\end{equation*}
	Since $Q$ is concentrated at points $\{H_a[\mu]\}_{a \in O}$ we get that for all $a \in O$
	\begin{equation*}
		v(H_a[\mu]) = \bar J(H_a[\mu], \bfa^u) = \bar J(H_a[\mu], \bfa^\star)
	\end{equation*}
	and the claim follows immediately.
\end{proof}

\textit{Acknowledgments.} The author wishes to thank Elsa Maria Marchini for useful discussions on the last section of this paper.

\bibliographystyle{plainnat} 
\bibliography{Bibliography}
 
\end{document}